\pgfplotsset{compat=1.17}
\newtheorem{definition}{Definition}
\newtheorem{lemma}{Lemma}
\newtheorem{proposition}{Proposition}
\newtheorem{example}{Example}
\newtheorem{theorem}{Theorem}
\newtheorem{corollary}{Corollary}
\newtheorem{remark}{Remark}
\newcommand{\diff}{\,\mathrm{d}}
\begin{document}

\title{A new method to construct high-dimensional copulas with Bernoulli and Coxian-2 distributions}
\author{Christopher Blier-Wong\thanks{Corresponding author, \href{mailto:chblw@ulaval.ca}{chblw@ulaval.ca}},
	Hélène Cossette, Sébastien Legros and Etienne Marceau\\ École d'actuariat, Université Laval, Québec, Canada}
\date{September 21 2022}
\maketitle

\begin{abstract}
We propose an approach to construct a new family of generalized Farlie-Gumbel-Morgenstern (GFGM) copulas that naturally scales to high dimensions. A GFGM copula can model moderate positive and negative dependence, cover different types of asymmetries, and admits exact expressions for many quantities of interest such as measures of association or risk measures in actuarial science or quantitative risk management. More importantly, this paper presents a new method to construct high-dimensional copulas based on mixtures of power functions, and may be adapted to more general contexts to construct broader families of copulas. We construct a family of copulas through a stochastic representation based on multivariate Bernoulli distributions and Coxian-2 distributions. This paper will cover the construction of a GFGM copula, study its measures of multivariate association and dependence properties. We explain how to sample random vectors from the new family of copulas in high dimensions. Then, we study the bivariate case in detail and find that our construction leads to an asymmetric modified Huang-Kotz FGM copula. Finally, we study the exchangeable case and provide some insights into the most negative dependence structure within this new class of high-dimensional copulas. 
\end{abstract}

\textbf{Keywords: Generalized Farlie-Gumbel-Morgenstern copulas, stochastic representation, high dimensional copulas, high-dimensional simulation, measures of multivariate association}

\section{Introduction}\label{sec:intro}

Bivariate Farlie-Gumbel-Morgenstern (FGM) distributions have a rich history in dependence modelling and copula theory. FGM copulas are derived from the FGM distributions studied in \cite{farlie1960PerformanceCorrelationCoefficients, gumbel1960BivariateExponentialDistributions, morgenstern1956EinfacheBeispieleZweidimensionaler}. Henri Eyraud also studied distributions of the same shape in \cite{eyraud1936principes}, in a work that predates the seminal work of  \cite{sklar1959FonctionsRepartitionDimensions} by many years. 

The FGM copulas appear in most standard references on dependence and copulas, see, e.g., \cite{kotz2001correlation, nelsen2006IntroductionCopulas, ruschendorf2013MathematicalRiskAnalysis, joe2015DependenceModelingCopulas, durante2015PrinciplesCopulaTheory}. The expression of a bivariate FGM copula is 
\begin{equation}\label{eq:fgm-biv}
	C(u, v) = uv(1 + \theta (1-u)(1-v)),
\end{equation}
for $\theta \in [-1, 1]$ and $(u, v) \in [0, 1]^2$. The value of $\theta$ is a proper dependence parameter; increasing (decreasing) $\theta$ induces positive (negative) associations between the margins. The importance and popularity of bivariate FGM copulas stem partly from their simple shape with quadratic sections, which lead to exact expressions for quantities of interest in dependence modelling and in applications of copulas. For instance, measures of multivariate association are often available in closed-form (see, for instance, \cite{nelsen2006IntroductionCopulas, genest2007EverythingYouAlways}), as well as risk measures for many stochastic models in risk management and actuarial science (examples include \cite{barges2011MomentsAggregateDiscounted,chadjiconstantinidis2014RenewalRiskProcess,woo2013NoteDiscountedCompound}). For this reason, FGM copulas have been important to illustrate dependence concepts in copula theory and applications, contributing to the popularity of copulas in many domains. 

Multivariate FGM copulas are derived from multivariate FGM distributions, introduced in \cite{johnson1975GeneralizedFarliegumbelmorgensternDistributions} and further studied in \cite{cambanis1977PropertiesGeneralizationsMultivariate}; they have an expression given by 
\begin{equation}\label{eq:fgm-natural}
	C(u_1, \dots, u_d) = \prod_{m = 1}^d u_m \left(1 + \sum_{k = 1}^d \sum_{1\leq j_1 < \dots < j_k \leq d} \theta_{j_1 \dots j_k} \overline{u}_{j_1}\dots \overline{u}_{j_k}\right),
\end{equation}
for $(u_1, \dots, u_d) \in [0, 1]^d$, where $\overline{u} = 1 - u$. A $d$-variate FGM copula has $2^d - d - 1$ parameters that we denote by the vector 
\begin{equation}\label{eq:thetas}
    \boldsymbol{\theta} = (\theta_{j_1 \dots j_k} : 1 \leq j_1 < \dots < j_k \leq d, k \in \{1, \dots, d\}).
\end{equation}
The expression in \eqref{eq:fgm-natural} is a valid copula if $\boldsymbol{\theta}$ belongs in the intersection of halfspaces
\begin{equation}\label{eq:Td}
	\left\{\boldsymbol{\theta} \in [-1, 1]^{2^d - d - 1} : 1 + \sum_{k = 2}^d \sum_{1 \leq j_1 < \dots < j_k \leq d} \theta_{j_1 \dots j_k} \varepsilon_{j_1}\dots \varepsilon_{j_k} \geq 0\right\},
\end{equation}
for $\{\varepsilon_{j_1}, \dots, \varepsilon_{j_k}\} \in \{0, 1\}^{2^d - d - 1}$. As opposed to bivariate copulas, the copula parameters in \eqref{eq:fgm-natural} are not easy to interpret (it isn't clear if increasing or decreasing a parameter in $\boldsymbol{\theta}$ leads to more positive or negative dependence) and it is tedious to determine if a given set of parameters satisfies the conditions of \eqref{eq:Td}. Further, it isn't obvious how to simulate observations of random vectors from these high-dimensional copulas, which severely limits the applications of high-dimensional FGM copulas for applications where Monte Carlo simulations are required. Such disadvantages contribute to the sparse literature on high-dimensional FGM copulas in multivariate analysis and in applications of copulas to other sciences. Exceptions include \cite{gijbels2021SpecificationMultivariateAssociation} who develop closed-form expressions for many measures of multivariate association, \cite{barges2009TVaRbasedCapitalAllocation, cossette2013MultivariateDistributionDefined} who study risk aggregation of dependent exponential and mixed Erlang distributions, and \cite{cossette2019CollectiveRiskModels} who study sums of a random number of random variables (rvs) under dependence. 

The authors of \cite{blier-wong2022stochastic} have recently uncovered a relationship between the family of high-dimensional FGM copulas and multivariate Bernoulli distributions with symmetric margins. In particular, they provide a stochastic representation of random vectors whose copula is FGM. It turns out that the dependence structure of FGM copulas relies entirely on an underlying vector of symmetric Bernoulli rvs. This fact allows one to shed light on the dependence structure of the FGM copulas by interpreting the dependence structure of the underlying vector of symmetric Bernoulli rvs, a simpler task since the support of the probability mass function (pmf) of multivariate symmetric Bernoulli distributions is discrete and finite. Relying on the vast literature on various properties of multivariate Bernoulli distributions, the stochastic representation of FGM copulas has led to a much deeper understanding of this family of copulas in high dimensions. In particular, the stochastic representation leads to comparison methods for random vectors under dependence orders, enables constructing subfamilies of high-dimensional FGM copulas with a limited number of parameters, and enables high-dimensional sampling procedures. Some subfamilies scale to countably infinite dimensions and rely on a small number of dependence parameters. Further, \cite{blier-wong2022ExchangeableFGMCopulas} study the subfamily of exchangeable FGM copulas. New results on risk measures in actuarial science also rely on the stochastic representation, see \cite{blier-wong2022RiskAggregationFGMa} for aggregate rvs and \cite{blier-wong2022CollectiveRiskModels} for sums of a random number of rvs. 

The simple shape of FGM copulas has downsides, including a moderate strength of admissible dependence, and for bivariate copulas, being symmetric about its major and minor diagonals. To circumvent these disadvantages, a large literature has developed around extending FGM copulas. 
For a comprehensive review of bivariate FGM copulas and their generalizations, we refer the reader to \cite{bairamov2013huang, saminger-platz2021impact}. Some examples of these generalizations include \cite{huang1984correlation,
	huang1999modifications,
	lai2000NewFamilyPositive,
	bairamov2002dependence,
	rodriguez-lallena2004NewClassBivariatea,
	amblard2009new, kim2011GeneralizedBivariateCopulas, durante2013BivariateCopulasGenerateda,pathak2016NoteGeneralizedFarlieGumbelMorgenstern,
	hurlimann2017comprehensive,
	komornik2017DependenceMeasuresPerturbations,
	cote2019dependence,
	bekrizadeh2022generalized,
	ebaid2022NewExtensionFGM}.
These copulas are usually constructed by validating the existence of the copula. One of these conditions in two dimensions includes the $2$-increasing property, requiring that 
\begin{equation}\label{eq:2-increasing}
	C(u_2, v_2) - C(u_1, v_2) - C(u_2, v_1) + C(u_1, v_1) \geq 0
\end{equation}
for every $0 \leq u_1 < u_2 \leq 1$ and $0\leq v_1 < v_2 \leq 1$. This property is often tedious to prove in two dimensions, and its high-dimensional generalization (the $d$-increasing property that we will state in Section \ref{ss:copulas}), is even harder. For this reason, there are very few generalized FGM copulas in high dimensions, exceptions include \cite{dolati2006some, rodriguez2010multivariate}.

The objective of this paper is to propose a new strategy to construct high-dimensional copulas that lets one control the shape and the strength of dependence. Our starting point is the stochastic representation of FGM copulas from \cite{blier-wong2022stochastic}, but relaxing the assumption that the underlying multivariate Bernoulli rvs be symmetric. The new family of copulas allows one to compare risks under stochastic orders, and admits exact representations for association measures and sampling methods. More importantly, it is simple to interpret the shape of dependence based on an underlying Bernoulli random vector. The method leads to subfamilies that have few parameters, even in countably infinite dimensions. Further, as opposed to many high-dimensional copulas, our family of copulas can admit negative dependence if the underlying multivariate Bernoulli random vector exhibits negative dependence. Finally, since we construct the copula with a stochastic representation, we do not need to verify the $d$-increasing property of copulas: selecting an appropriate distribution for a multivariate Bernoulli random vector will guarantee the existence of the corresponding copula. 

For the bivariate case, it is convenient to work with an algebraic representation of the FGM copula with a genuine dependence parameter. This will lead to simpler methods to understand the geometric properties, the shape of the dependence, etc. Within the algebraic construction, we recover  an asymmetric modified Huang-Kotz FGM copula. For dimensions higher than two, it is not convenient to work with dependence parameters since constraints on the set of parameters may be difficult to satisfy. It is more practical to define and study the properties of the new family of copulas using a stochastic representation rather than an algebraic representation. The stochastic representation guarantees the existence of the copula without the need to validate the $d$-increasing property. 

The remainder of this paper is structured thusly. Section \ref{sec:preliminaries} presents preliminary notions required within this paper. Section \ref{sec:copula} introduces the new family of copulas and studies its properties. In Section \ref{sec:bivariate}, we focus on the bivariate copula and obtain a stochastic representation of one of the copulas studied in \cite{huang1999modifications}. Section \ref{sec:subfamilies} details some properties of the subfamily corresponding to exchangeability. We discuss our findings and propose future works in Section \ref{sec:conclusion}.

\section{Preliminaries}\label{sec:preliminaries}

\subsection{Notation}

We use capital letters for random variables and lowercase letters for their outcomes. The lower case letter $p$ is reserved for probabilities, hence we always have $p \in (0, 1)$. The letter $I$ corresponds to Bernoulli rvs, while the letter $U$ denotes a standard uniformly distributed rv. Bold capital and lower case letters correspond to vectors of rvs and of numbers, implicitly the dimension of the vectors will be $d \in\{2, 3, \dots\}$. It follows that $i \in \{0, 1\}$, $\boldsymbol{i} \in \{0, 1\}^d$, $u \in [0, 1]$ and $\boldsymbol{u} \in [0, 1]^d$. We denote respectively as $F_{X}$ and $F_{\boldsymbol{X}}$ the cumulative distribution function (cdf) and joint cdf of $X$ and $\boldsymbol{X}$. Equivalently, $f_{X}$ and $f_{\boldsymbol{X}}$ correspond to probability density functions or pmfs, depending on the support of $X$ or $\boldsymbol{X}$. All operations such as $\boldsymbol{x} \boldsymbol{y}$, $\boldsymbol{x}^{\boldsymbol{y}}$ or $\boldsymbol{x} \leq \boldsymbol{y}$ are meant component-wise. 

\subsection{Copulas}\label{ss:copulas}

This paper deals with $d$-variate copulas, which correspond to $d$-variate cdfs (restricted to $[0, 1]^d$) with standard uniform margins. The term ``copula'', dating back to the seminal work of \cite{sklar1959FonctionsRepartitionDimensions}, refers to linking $d$ univariate cdfs into a cdf for a $d$-variate vector of uniformly distributed rvs. A $d$-variate copula is a function $C: [0,1]^d \to [0, 1]$ satisfying the boundary conditions $C(u_1, \dots, u_d) = 0$ if any $u_j = 0$, for $j \in {1, \dots, d}$ and $C(u_1, \dots, u_d) = u_j$ if $u_k = 1$ for all $k \in {1, \dots, d}$ and $k \neq j$. Further, copulas must be $d$-increasing on $[0, 1]^d$, meaning that
\begin{equation}\label{eq:d-increasing}
	\sum_{i_1 = 1}^2\dots \sum_{i_d = 1}^2 (-1)^{i_1 + \dots + i_d}C(u_{1i_1}, \dots, u_{di_d}) \geq 0,
\end{equation}
for all $0 \leq u_{j1} < u_{j2} \leq 1$ and $j \in \{1, \dots, d\}.$

One major hurdle in constructing $d$-variate copulas is constructing a function satisfying the $d$-increasing property in \eqref{eq:d-increasing}. Indeed, much of the work in constructing families of extended FGM copulas proposed in the introduction is in determining the set of admissible parameters such that the copulas satisfy the $d$-increasing property. As the dimension $d$ increases, the $d$-increasing property becomes harder to verify, and the parameters have much more constraints to satisfy. This in turn makes it harder to interpret the copula parameters, notably the effect of changing the parameters on the strength of dependence. 

\subsection{Exponential FGM distributions and their stochastic representation}

In this paper, we propose a generalization of the FGM copula. One may extract FGM copulas from multivariate exponential distributions. Let
\begin{equation}\label{eq:cdf-exp-fgm}
	F(\boldsymbol{x}) = \prod_{m = 1}^d (1 - e^{-x_m}) \left(1 + \sum_{k = 2}^d \sum_{1 \leq j_1 < \dots < j_k \leq d}\theta_{j_1 \dots j_k}\prod_{l = 1}^k e^{-x_{j_l}}\right),
\end{equation}
where the parameters $\boldsymbol{\theta}$ must satisfy \eqref{eq:Td} and for $\boldsymbol{x} \in \mathbb{R}^d_+$. Applying Sklar's Theorem to \eqref{eq:cdf-exp-fgm}, one obtains the expression of the $d$-variate FGM copula in \eqref{eq:fgm-natural}. 

In \cite{blier-wong2022stochastic}, the authors find a one-to-one correspondence between the family of exponential FGM distributions and symmetric multivariate Bernoulli distributions. We recall the following theorem from that paper.  
\begin{theorem}\label{thm:X-FGM}
	Let $\boldsymbol{I}$ be a symmetric Bernoulli random vector, $\boldsymbol{Y}_0$ be a vector of independent exponentially distributed rvs with mean 1/2 and $\boldsymbol{Y}_1$, a vector of independent exponentially distributed rvs with mean 1. Further, assume that $\boldsymbol{I}$, $\boldsymbol{Y}_0$ and $\boldsymbol{Y}_1$ are independent of each other. Let 
	\begin{equation}\label{eq:X-FGM}
		\boldsymbol{X} = \boldsymbol{Y}_0 + \boldsymbol{I} \boldsymbol{Y}_1
	\end{equation}
	and
	$$\theta_{j_1 \dots j_k} = \sum_{i_{j_1}, \dots, i_{j_k} \in \{0, 1\}^k} (-1)^{i_{j_1} + \dots + i_{j_k}}f_{I_{j_1},\dots, I_{j_k}}(i_{j_1}, \dots, i_{j_k}),$$
	for $j \leq j_1 < \dots < j_k \leq d$ and $k \in \{2, \dots, d\}$. Then, the cdf of $\boldsymbol{X}$ can be written as \eqref{eq:cdf-exp-fgm}. 
\end{theorem}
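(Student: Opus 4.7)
The plan is to compute the joint cdf of $\boldsymbol{X}$ directly by conditioning on $\boldsymbol{I}$ and using the three-way independence hypothesis together with the fact that $\boldsymbol{Y}_0$ and $\boldsymbol{Y}_1$ each have mutually independent components. Conditional on $\boldsymbol{I} = \boldsymbol{i}$, the coordinates $X_m = Y_{0,m} + i_m Y_{1,m}$ are independent, with conditional cdf $G_{i_m}$, where $G_0(x) = 1 - e^{-2x}$ is the cdf of an Exp$(2)$ rv (mean $1/2$) and $G_1(x) = 1 - 2e^{-x} + e^{-2x} = (1 - e^{-x})^2$ is the cdf of the convolution of Exp$(2)$ and Exp$(1)$ (obtained either by a one-line convolution or by partial fractions on the Laplace transform $2/[(2+s)(1+s)]$). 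Hence
\[
F_{\boldsymbol{X}}(\boldsymbol{x}) \;=\; \sum_{\boldsymbol{i} \in \{0,1\}^d} f_{\boldsymbol{I}}(\boldsymbol{i}) \prod_{m=1}^d G_{i_m}(x_m),
\]
and the goal becomes to rearrange the right-hand side into the form \eqref{eq:cdf-exp-fgm}.

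The step I expect to be the only non-routine one is spotting that both $G_0$ and $G_1$ admit the common factorization
\[
G_{i_m}(x_m) \;=\; (1 - e^{-x_m})\bigl(1 + (-1)^{i_m} e^{-x_m}\bigr),
\]
which is checked in one line for each $i_m \in \{0,1\}$. This is the key algebraic trick: it unifies the two conditional cases into a single expression, pulls the exponential margin $\prod_m(1-e^{-x_m})$ out of the $\boldsymbol{i}$-sum, and leaves behind a product of binomials that is perfectly suited for a combinatorial expansion. Without this packaging the two cases $i_m=0$ and $i_m=1$ cannot be merged, and the proof does not get off the ground.

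Once that factorization is in hand, I would apply the standard product expansion
\[
\prod_{m=1}^d \bigl(1 + (-1)^{i_m} e^{-x_m}\bigr) \;=\; \sum_{S \subseteq \{1,\dots,d\}} e^{-\sum_{m \in S} x_m} \prod_{m \in S}(-1)^{i_m},
\]
interchange the sums over $\boldsymbol{i}$ and $S$, and identify the coefficient of $e^{-\sum_{m \in S} x_m}$ as $E\bigl[\prod_{m \in S}(-1)^{I_m}\bigr]$. The term $S=\emptyset$ contributes $1$. For $|S|=1$, the symmetry assumption on $\boldsymbol{I}$ gives $E[(-1)^{I_j}] = P(I_j=0) - P(I_j=1) = 0$, so all singletons vanish; this is precisely where symmetry enters and explains the absence of ``univariate'' $\theta_j$ in \eqref{eq:cdf-exp-fgm}. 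For $|S|=k\ge 2$ with $S=\{j_1,\dots,j_k\}$, expanding the expectation as a sum over $(i_{j_1},\dots,i_{j_k}) \in \{0,1\}^k$ reproduces exactly the defining sum of $\theta_{j_1\dots j_k}$ in the statement. Reassembling gives \eqref{eq:cdf-exp-fgm}. Everything after the $G_{i_m}$ factorization is algebraic bookkeeping plus one convenient interchange of finite sums.
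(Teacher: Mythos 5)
Your proof is correct. The paper itself does not reprove this theorem (it is recalled from the cited reference \cite{blier-wong2022stochastic}), but your argument is exactly the strategy the paper uses for its generalization in Theorem \ref{thm:copula}: condition on $\boldsymbol{I}$, factor across components by independence, and rewrite each conditional marginal cdf in a form linear in $i_m$ before expanding — your factorization $G_{i_m}(x) = (1-e^{-x})\bigl(1+(-1)^{i_m}e^{-x}\bigr)$ plays precisely the role that Lemma \ref{lemma:u-stochastic-marginal} plays there, and the vanishing of the singleton terms via $E[(-1)^{I_j}]=0$ correctly isolates where the symmetry assumption is used.
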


In this paper, we rely on the construction of Theorem \ref{thm:X-FGM} and generalize the assumptions of the representation to propose a new family of copulas. Before proceeding to the novel family of copulas, we will require some alternate representations of univariate exponential and uniform rvs, that we will introduce next. 





\subsection{Some univariate results}

Following  \cite[Section 2.3]{klugman2013loss}, \cite[Chapter 9]{asmussen2010ruin} or \cite[Example 3.4]{bladt2005review}
we define Coxian-2 distributions through their Laplace-Stieltjes transforms (LST), 
\begin{equation}\label{eq:lst-cox2}
	\mathcal{L}_{X}(t) = (1-p)\frac{\beta_1}{t + \beta_1} + p\frac{\beta_1}{t + \beta_1}\frac{\beta_2}{t + \beta_2},
\end{equation}
for $\beta_1 > 0, \beta_2 > 0$ and $\beta_1 \neq \beta_2$. 

We aim to find the conditions under which a Coxian-2 distribution leads to a standard exponential distribution; the following proposition presents one such construction.
\begin{proposition}\label{prop:marginal-exp-nonsymmetric}
	Let $X$ be a Coxian-2 distribution with $p \in (0, 1)$, $\beta_1 = 1/(1-p)$ and $\beta_2 = 1$. Then, $X \sim Exp(1)$. 
\end{proposition}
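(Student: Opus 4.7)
The plan is to verify the claim directly at the level of Laplace--Stieltjes transforms, since the Coxian-2 distribution is defined here via its LST in \eqref{eq:lst-cox2} and the LST uniquely determines the law. Since $Exp(1)$ has LST $\mathcal{L}(t) = 1/(t+1)$, it suffices to substitute $\beta_1 = 1/(1-p)$ and $\beta_2 = 1$ into \eqref{eq:lst-cox2} and simplify to obtain $1/(t+1)$.

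Concretely, I would first simplify each of the two summands separately. The first term reduces via the cancellation
$$
(1-p)\,\frac{\beta_1}{t+\beta_1} \;=\; (1-p)\,\frac{1/(1-p)}{t + 1/(1-p)} \;=\; \frac{1-p}{(1-p)t + 1},
$$
after multiplying numerator and denominator by $1-p$. The second term becomes
$$
p\,\frac{\beta_1}{t+\beta_1}\,\frac{\beta_2}{t+\beta_2} \;=\; \frac{p}{\bigl((1-p)t + 1\bigr)(t+1)},
$$
by the same manipulation together with $\beta_2/(t+\beta_2) = 1/(t+1)$.

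Next I would put the two expressions over the common denominator $((1-p)t+1)(t+1)$ and expand the numerator:
$$
(1-p)(t+1) + p \;=\; (1-p)t + (1-p) + p \;=\; (1-p)t + 1.
$$
This factor cancels against the corresponding factor in the denominator, leaving $\mathcal{L}_X(t) = 1/(t+1)$. By the uniqueness of the LST, $X \sim Exp(1)$.

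There is no real obstacle here; the proof is a one-line algebraic identity, and the only thing worth flagging is that the hypothesis $\beta_1 \neq \beta_2$ required in \eqref{eq:lst-cox2} is automatically satisfied since $p \in (0,1)$ forces $1/(1-p) \neq 1$. The significance of the result lies not in its difficulty but in its role: it provides the building block that will let us replace the $Exp(1)$ summand in the stochastic representation \eqref{eq:X-FGM} of Theorem~\ref{thm:X-FGM} with a Coxian-2 mixture, thereby breaking the symmetry assumption on $\boldsymbol{I}$ while preserving exponential margins.
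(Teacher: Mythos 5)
Your proof is correct and follows essentially the same route as the paper: substitute the given parameters into the LST \eqref{eq:lst-cox2}, combine over a common denominator, and cancel the factor $(1-p)t+1$ to obtain $1/(1+t)$; the paper merely factors out $\beta_1/(t+\beta_1)$ first rather than clearing both denominators at once. The observation that $p\in(0,1)$ guarantees $\beta_1\neq\beta_2$ is a nice touch, but the argument is otherwise identical in substance.
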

\begin{proof}
	We prove the statement through LSTs. Replacing the parameters of Proposition \ref{prop:marginal-exp-nonsymmetric} within the LSTs of Coxian-2 distributions in \eqref{eq:lst-cox2}, we have 
	\begin{align*}
		\mathcal{L}_X(t) &= \frac{(1-p)^{-1}}{(1-p)^{-1} + t}\left(1 - p + p \frac{1}{1 + t}\right)= \frac{1}{1 + t(1-p)} \left(\frac{(1-p)(1+t)}{1+t} + \frac{p}{1 + t}\right)\\
		&= \frac{1}{1 + t(1-p)}\left(\frac{1 + t(1-p)}{1 + t}\right) = \frac{1}{1 + t}, \quad t \geq 0,
	\end{align*}
	which is the LST of an exponential distribution with parameter 1. 
\end{proof}

From the results of Proposition \ref{prop:marginal-exp-nonsymmetric}, we have the following stochastic representation. Let $Y_0 \sim Exp\left((1-p)^{-1}\right)$, $Y_1 \sim Exp(1)$ and $I \sim Bern(p)$, and let $Y_0, Y_1$ and $I$ be independent from each other. Then, 
\begin{equation}\label{eq:representation-x-univariate}
	Y_0 + I Y_1 \overset{\mathcal{D}}{=} X,
\end{equation}
where $X \sim Exp(1)$.

Proposition \ref{prop:marginal-exp-nonsymmetric} lets one construct an alternative representation of uniform rvs. Let $Y_2$ follow a standard exponential distribution, and $U$, $U_0$ and $U_1$ be independent standard uniform rvs and $I$ be Bernoulli distributed with success probability $p$. Applying the probability integral transform (see, e.g., \cite[Theorem 2.1.10]{casella2002statistical}), we have that $1-\exp\{-X\} \overset{\mathcal{D}}{=} U$, and that $1 - \exp\{-Y_0 - IY_1\} \overset{\mathcal{D}}{=} 1 - \exp\{-(1-p)Y_2 - IY_1\} \overset{\mathcal{D}}{=} 1 - \exp\{-Y_2\}^{1-p}\exp\{-Y_1\}^I\overset{\mathcal{D}}{=} U_0^{1-p}U_1^{I}$. It follows from \eqref{eq:representation-x-univariate} that
\begin{equation}\label{eq:u-stochastic-cox2}
	U_0^{1-p} U_1^{I}\overset{\mathcal{D}}{=} U.
\end{equation}
One may verify that the cdf of $U$ is $u$, for $u\in[0, 1]$. However, to explore copulas constructed within this paper, it will be useful to derive the cdf associated with the stochastic representation in  \eqref{eq:u-stochastic-cox2}. 
\begin{lemma}\label{lemma:u-stochastic-marginal}
	The cdf of $U$ from the representation in \eqref{eq:u-stochastic-cox2} is
	\begin{equation}\label{eq:cdf-u-marginal}
		F_{U}(u) = E\left[(1-I) u^{(1-p)^{-1}} + I \left(\frac{u}{p} - \frac{1-p}{p} u^{(1-p)^{-1}}\right) \right], \quad u \in[0, 1].
	\end{equation}	
\end{lemma}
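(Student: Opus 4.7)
The plan is to condition on the Bernoulli rv $I$ and use that $I$ is independent of $(U_0, U_1)$, so by the law of total probability
\begin{equation*}
F_U(u) = (1-p)\, P\bigl(U_0^{1-p} \le u\bigr) + p\, P\bigl(U_0^{1-p} U_1 \le u\bigr).
\end{equation*}
Rewriting the right-hand side as $E[(1-I)(\cdots) + I(\cdots)]$ recovers the target form, so the work reduces to computing the two conditional cdfs. The first is immediate from the monotone transformation $x \mapsto x^{1-p}$: since $U_0 \sim U(0,1)$, we have $P(U_0^{1-p} \le u) = P(U_0 \le u^{1/(1-p)}) = u^{(1-p)^{-1}}$ for $u \in [0,1]$, which directly accounts for the first term in the expectation.

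The second piece, $P(U_0^{1-p} U_1 \le u)$, is the real computational step. I would condition on $U_0$ and split according to whether $U_0 \le u^{1/(1-p)}$ (so that $U_0^{1-p} \le u$, forcing the inequality regardless of $U_1 \in [0,1]$) or $U_0 > u^{1/(1-p)}$ (in which case $U_1$ must satisfy $U_1 \le u\, U_0^{-(1-p)}$, an event of probability $u U_0^{-(1-p)}$). This gives
\begin{equation*}
P\bigl(U_0^{1-p} U_1 \le u\bigr) = u^{(1-p)^{-1}} + u \int_{u^{1/(1-p)}}^{1} v^{-(1-p)}\, dv.
\end{equation*}
The remaining integral is elementary: the antiderivative is $v^{p}/p$, and evaluating at the endpoints yields $u/p - u^{(1-p)^{-1}}/p$ after combining $u \cdot u^{p/(1-p)} = u^{1/(1-p)}$. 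Substituting back gives $P(U_0^{1-p} U_1 \le u) = u/p - (1-p) u^{(1-p)^{-1}}/p$, which is exactly the coefficient of $I$ in the stated expectation.

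Plugging both conditional cdfs into the first display and rewriting with the indicator $I$ yields the claimed formula. The only nontrivial step is the product computation in the $I=1$ case; the main obstacle is just keeping the exponents of $u$ straight, since $1 + p/(1-p) = 1/(1-p)$ is what causes the two power terms to combine cleanly into $u/p$ and $-(1-p)u^{(1-p)^{-1}}/p$. No further regularity or measurability issues arise because $U_0, U_1, I$ are independent with elementary distributions.
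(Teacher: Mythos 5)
Your proof is correct and follows essentially the same route as the paper: condition on $I$, handle the $I=0$ term by the monotone transformation, and compute $\Pr(U_0^{1-p}U_1 \le u)$ by conditioning on one of the uniforms and splitting into the case where the inequality holds automatically and the case where it constrains the other variable. The only (cosmetic) difference is that you condition on $U_0$ where the paper conditions on $U_1$; both integrals are elementary and yield the same expression $\frac{u}{p} - \frac{1-p}{p}u^{(1-p)^{-1}}$.
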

\begin{proof}
	We have
	\begin{align*}
		F_U(u) &= \Pr(U \leq u) = E_{I}\left[\Pr(U \leq u \vert I)\right] = E_{I}\left[\Pr\left(U_0 ^{1-p} U_1 ^{I} \leq u \vert I\right)\right] \\
		&= \Pr(I = 0) \Pr\left(U_0 ^{1-p} \leq u\right) + \Pr(I = 1) \Pr\left(U_0 ^{1-p}U_1 \leq u\right),
	\end{align*}
	which, by conditioning on $U_1$, becomes 
	$$F_U(u) = \Pr(I = 0) u^{(1-p)^{-1}} + \Pr(I = 1) E_{U_1}\left[\Pr\left(\left.U_0 \leq \left(\frac{u}{U_1}\right)^{(1-p)^{-1}} \right\vert U_1\right)\right].$$
	To solve the expected value, we must handle the cases where $u/U_1 >1$ and where $u/U_1 \leq1$ differently as follows:
	\begin{align*}
		\Pr(U \leq u) & = \Pr(I = 0)u^{(1-p)^{-1}} + \Pr(I = 1)\left(\int_0^u \diff v + \int_u^1 \left(\frac{u}{v}\right)^{(1-p)^{-1}} \diff v\right) \\
		              & = \Pr(I = 0)u^{(1-p)^{-1}} + \Pr(I = 1)\left(\frac{u}{p} - \frac{1-p}{p} u^{(1-p)^{-1}}\right).
	\end{align*}
	Writing the last equality as the expected value over $I$ completes the proof. 
\end{proof}

\section{A new high-dimensional copula}\label{sec:copula}

This section introduces the copula studied in this paper. We start with the stochastic representation of FGM copulas from Theorem \ref{thm:X-FGM}, then extend this family using the representation of exponential rvs in Proposition \ref{prop:marginal-exp-nonsymmetric}. 

\subsection{Representation}\label{ss:representation}

Let $\boldsymbol{I} = (I_1, \dots, I_d)$ be a $d$-variate Bernoulli random vector, where $\Pr(I_j = 1) = p_j$ for $j \in \{1, \dots, d\}$. For notational convenience, we will denote the vector of probabilities $\boldsymbol{p} = (p_1, \dots, p_d)$. Let $\boldsymbol{Y}_0$ be a vector of exponentially distributed rvs, where the $j$th margin has mean $1-p_j$, for $j \in \{1, \dots, d\}$. Let $\boldsymbol{Y}_1$ be a vector of independent exponentially distributed rvs with mean 1. Further assume that the vectors $\boldsymbol{I}$, $\boldsymbol{Y}_0$ and $\boldsymbol{Y}_1$ are independent of each other. We define the random vector 
\begin{equation}\label{eq:stochastic-form}
	\boldsymbol{X} = \boldsymbol{Y}_0 + \boldsymbol{I} \boldsymbol{Y}_1.
\end{equation}
Following Proposition \ref{prop:marginal-exp-nonsymmetric}, $\boldsymbol{X}$ forms a vector of rvs with standard exponential margins. Applying the probability integral transform to each component in \eqref{eq:stochastic-form}, we obtain an equivalent representation for uniform margins. Let $\boldsymbol{U}_0 = (U_{0, 1}, \dots, U_{0, d})$ and $\boldsymbol{U}_1 = (U_{1, 1}, \dots, U_{1, d})$ be random vectors of independent uniform rvs and $\boldsymbol{I}$ be a $d$-variate Bernoulli random vector with vector of probabilities $\boldsymbol{p}$. The random vectors $\boldsymbol{I}$, $\boldsymbol{U}_0$ and $\boldsymbol{U}_1$ are independent of each other. Further, define
\begin{equation}\label{eq:representation-u}
	\boldsymbol{U} \overset{\mathcal{D}}{=} \boldsymbol{U}_0 ^{\boldsymbol{1} - \boldsymbol{p}} \boldsymbol{U}_1^{\boldsymbol{I}},
\end{equation}
where $\boldsymbol{1}$ is a vector of ones. Our objective in the remainder of this paper is to study the family of copulas that correspond to cdfs for random vectors in \eqref{eq:representation-u}. 
\begin{theorem}\label{thm:copula}
	The copula associated with the cdf of the random vector in \eqref{eq:representation-u} is 
	\begin{equation}\label{eq:copula-stochastic}
		C(\boldsymbol{u}) = E\left[\prod_{m = 1}^{d}\left(u_m ^{(1-p_m)^{-1}} - I_m \left\{\frac{u_m^{(1-p_m)^{-1}} - u_m}{p_m}\right\}\right)\right], \quad \boldsymbol{u}\in [0, 1]^d.
	\end{equation} 
\end{theorem}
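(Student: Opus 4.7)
The plan is as follows. First, note that by Proposition~\ref{prop:marginal-exp-nonsymmetric} (restated in the univariate form \eqref{eq:u-stochastic-cox2}), each marginal $U_m = U_{0,m}^{1-p_m} U_{1,m}^{I_m}$ is standard uniform on $[0,1]$. Hence Sklar's theorem applies in its simplest form: the joint cdf $F_{\boldsymbol{U}}$ of $\boldsymbol{U}$ \emph{is} the copula $C$, and it suffices to compute $F_{\boldsymbol{U}}(\boldsymbol{u}) = \Pr(U_1 \le u_1, \dots, U_d \le u_d)$.

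Next, I would condition on the Bernoulli vector $\boldsymbol{I}$ and exploit independence. Given $\boldsymbol{I} = \boldsymbol{i}$, the $d$ random variables $U_{0,1}^{1-p_1} U_{1,1}^{i_1}, \dots, U_{0,d}^{1-p_d} U_{1,d}^{i_d}$ are mutually independent, because they involve disjoint coordinates of the independent vectors $\boldsymbol{U}_0$ and $\boldsymbol{U}_1$. Thus
\begin{equation*}
\Pr(\boldsymbol{U} \le \boldsymbol{u} \mid \boldsymbol{I} = \boldsymbol{i}) = \prod_{m=1}^d \Pr\bigl(U_{0,m}^{1-p_m} U_{1,m}^{i_m} \le u_m \bigr).
\end{equation*}

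The third step is to evaluate each factor using the computation inside the proof of Lemma~\ref{lemma:u-stochastic-marginal}. That proof shows, \emph{before taking the outer expectation}, that
\begin{equation*}
\Pr\bigl(U_{0,m}^{1-p_m} U_{1,m}^{i_m} \le u_m\bigr) = (1-i_m)\, u_m^{(1-p_m)^{-1}} + i_m\left(\frac{u_m}{p_m} - \frac{1-p_m}{p_m}\, u_m^{(1-p_m)^{-1}}\right),
\end{equation*}
which a short algebraic rearrangement (treating the $i_m = 0$ and $i_m = 1$ cases separately) rewrites as $u_m^{(1-p_m)^{-1}} - i_m \bigl\{(u_m^{(1-p_m)^{-1}} - u_m)/p_m\bigr\}$.

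Finally, taking the expectation over $\boldsymbol{I}$ yields
\begin{equation*}
C(\boldsymbol{u}) = F_{\boldsymbol{U}}(\boldsymbol{u}) = E_{\boldsymbol{I}}\!\left[\prod_{m=1}^d \left(u_m^{(1-p_m)^{-1}} - I_m \left\{\frac{u_m^{(1-p_m)^{-1}} - u_m}{p_m}\right\}\right)\right],
\end{equation*}
which is exactly \eqref{eq:copula-stochastic}. There is no real obstacle here; the proof is essentially bookkeeping built on two already-established facts (standard-uniform margins and the conditional cdf computed in Lemma~\ref{lemma:u-stochastic-marginal}). The only point one must be careful about is that independence of the factors $U_{0,m}^{1-p_m} U_{1,m}^{I_m}$ holds \emph{only after} conditioning on $\boldsymbol{I}$, which is precisely why the dependence across coordinates is entirely encoded by the joint law of $\boldsymbol{I}$ inside the outer expectation.
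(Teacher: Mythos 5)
Your proposal is correct and follows essentially the same route as the paper's proof: condition on $\boldsymbol{I}$, factor the conditional probability across coordinates by independence, apply the per-coordinate computation from Lemma~\ref{lemma:u-stochastic-marginal}, and take the outer expectation. Your explicit remarks that the margins are uniform (so $F_{\boldsymbol{U}}=C$) and that independence of the factors holds only conditionally on $\boldsymbol{I}$ are left implicit in the paper but add nothing beyond bookkeeping.
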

\begin{proof}
We have $$\Pr(\boldsymbol{U} \leq \boldsymbol{u}) = \Pr\left(\boldsymbol{U}_0 ^{\boldsymbol{1} - \boldsymbol{p}} \boldsymbol{U}_1^{\boldsymbol{I}} \leq \boldsymbol{u}\right).$$
Conditioning on $\boldsymbol{I}$ and by independence of $\boldsymbol{I}, \boldsymbol{U}_0$ and $\boldsymbol{U}_1$, we have
$$\Pr(\boldsymbol{U} \leq \boldsymbol{u}) = E_{\boldsymbol{I}}\left[\prod_{m = 1}^d \Pr\left(U_{0, m} ^{1 - p_m} U_{1, m}^{I_m} \leq u_m \vert \boldsymbol{I}\right)\right].$$
Applying Lemma \ref{lemma:u-stochastic-marginal} on each multiple, we obtain 
$$\Pr(\boldsymbol{U} \leq \boldsymbol{u}) = E\left[\prod_{m = 1}^{d} \left((1-I_m)u_m^{(1-p_m)^{-1}} + I_m\left\{\frac{u_m}{p_m} - \frac{1-p_m}{p_m} u_m^{(1-p_m)^{-1}} \right\}\right)\right].$$
We obtain \eqref{eq:copula-stochastic} by rearranging the terms of the last equality. 
\end{proof}
In the remainder of this paper, we denote by $\mathcal{C}^{GFGM}$ the class of copulas with expressions as in Theorem \ref{thm:copula}. One may also idenitfy a natural representation of $C\in \mathcal{C}^{GFGM}$. We obtain the following corollary by factoring and expanding the product in Theorem \ref{thm:copula}.
\begin{corollary}
The expression of the copula in \eqref{eq:copula-stochastic} is also
\begin{equation}\label{eq:copula-natural}
	C(\boldsymbol{u}) = \prod_{m = 1}^d u_m \left(1 + \sum_{k = 1}^d \sum_{1 \leq j_1 < \dots < j_k \leq d} \nu_{j_1\dots j_k} \left(1 - u_{j_1}^{\frac{p_{j_1}}{1-p_{j_1}}}\right) \dots \left(1 - u_{j_k}^{\frac{p_{j_k}}{1-p_{j_k}}}\right)\right),
\end{equation}
for $\boldsymbol{u} \in [0, 1]^d,$ where
$$\nu_{j_1\dots j_k} = E\left[\prod_{n = 1}^{k} \frac{I_{j_n} - p_{j_n}}{p_{j_n}}\right]$$
for $1\leq j_1 < \dots < j_k \leq d$ and $k \in \{2, \dots, d\}$.
\end{corollary}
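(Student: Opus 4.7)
The plan is a direct algebraic manipulation: rewrite each factor in the expectation of Theorem \ref{thm:copula} so that $u_m$ factors out, and then expand the product over $m$ and push the expectation inside via linearity.

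First, I would simplify one factor. Writing $u_m^{(1-p_m)^{-1}} = u_m \cdot u_m^{p_m/(1-p_m)}$ and letting $b_m = u_m^{p_m/(1-p_m)}$, the $m$-th factor inside the expectation becomes
\begin{align*}
u_m b_m - I_m \frac{u_m b_m - u_m}{p_m}
&= u_m\left(b_m - \frac{I_m}{p_m}(b_m - 1)\right)\\
&= u_m\left(1 + \frac{I_m - p_m}{p_m}(1 - b_m)\right),
\end{align*}
where the last identity follows from $b_m - \frac{I_m}{p_m}(b_m-1) = 1 + (b_m-1)(1 - I_m/p_m) = 1 + (1-b_m)(I_m - p_m)/p_m$. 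Substituting back into \eqref{eq:copula-stochastic} and pulling $\prod_m u_m$ out of the expectation yields
$$C(\boldsymbol{u}) = \prod_{m=1}^d u_m \cdot E\left[\prod_{m=1}^d \left(1 + \frac{I_m - p_m}{p_m}\left(1 - u_m^{p_m/(1-p_m)}\right)\right)\right].$$

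Next, I would expand the product over $m$ using the multinomial identity $\prod_{m=1}^d (1 + a_m) = \sum_{S \subseteq \{1,\dots,d\}} \prod_{m \in S} a_m$, with $a_m = \frac{I_m - p_m}{p_m}(1 - u_m^{p_m/(1-p_m)})$. Since the factors $(1 - u_m^{p_m/(1-p_m)})$ are deterministic, linearity of expectation gives
$$E\left[\prod_{m=1}^d(1 + a_m)\right] = \sum_{S \subseteq \{1,\dots,d\}} \prod_{m \in S}\left(1 - u_m^{p_m/(1-p_m)}\right) \cdot E\left[\prod_{m \in S} \frac{I_m - p_m}{p_m}\right].$$
The $S = \emptyset$ term contributes $1$, and each singleton $S = \{j\}$ contributes $0$ since $E[I_j - p_j] = 0$. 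Relabelling the remaining subsets by $\{j_1 < \dots < j_k\}$ for $k \geq 2$ gives exactly the sum in \eqref{eq:copula-natural} with the stated coefficients $\nu_{j_1 \dots j_k}$.

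I do not expect a real obstacle: the argument is purely mechanical once the correct factorization $u_m^{(1-p_m)^{-1}} = u_m \cdot u_m^{p_m/(1-p_m)}$ is used. The only subtle point is recognizing the algebraic rewrite that isolates the centered variable $(I_m - p_m)/p_m$, which is natural in hindsight because it is precisely the centering needed for the singleton terms in the expansion to vanish and thereby recover a natural FGM-type representation starting at order $k=2$.
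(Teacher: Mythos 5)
Your proof is correct and follows exactly the route the paper intends: the paper's entire justification is the phrase ``by factoring and expanding the product in Theorem \ref{thm:copula},'' and your factorization $u_m^{(1-p_m)^{-1}} = u_m\, u_m^{p_m/(1-p_m)}$, the rewrite isolating $(I_m-p_m)/p_m$, and the subset expansion with vanishing singleton terms are precisely the omitted details. Nothing is missing.
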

\begin{remark}
	We present remarks related to the class $\mathcal{C}^{GFGM}$ of copulas.
	\begin{enumerate}
	\item It follows that \eqref{eq:copula-stochastic} satisfies all of the conditions for the existence of copulas as presented in \eqref{ss:copulas}. Therefore, one does not need to go through the tedious steps of verifying the $d$-increasing property. 
	\item Copulas in $\mathcal{C}^{GFGM}$ have linear sections raised to a power, hence will lead to closed-form expressions for many quantities of interest in dependence modelling, and in applications like actuarial science.
	\item We say that $\boldsymbol{p}$ is a shape parameter of the copula since its value determines whether the copula will have more mass on the upper or lower tails. This statement will become clear in Section \ref{sec:bivariate}, where we present the density function for the bivariate copulas with different values of $p_1$ and $p_2$. Also, note that each vector of $\boldsymbol{p}$ leads to a subfamily of copulas, and it does not make sense to compare the strength of dependence between two copulas with different vectors $\boldsymbol{p}$.  
    \item The copula in \eqref{eq:copula-natural} has dependence parameters, akin to the vector of parameters $\boldsymbol{\theta}$ in the family of FGM copulas. However, we advise against using the natural representation of the copula in \eqref{eq:copula-natural} because it is not easy to interpret the strength of dependence, nor is it easy to verify the conditions under which the set of parameters $\nu_{j_1\dots j_k}$ for $1\leq j_1<\dots< j_k \leq d$ and $k \in \{2, \dots, d\}$ yield a copula that satisfies the $d$-increasing property. Further, the pmf of multivariate Bernoulli distributions can be expressed as points in a convex hull \cite{fontana2018RepresentationMultivariateBernoulli} and the extreme points of the convex hull can sometimes be found analytically \cite{fontana2021ModelRiskCredit, fontana2022HighDimensionalBernoulli}. It follows that the copula in \eqref{eq:copula-stochastic} will share these properties, which will let us characterize the possible dependence structures for a fixed vector of probabilities $\boldsymbol{p}$.
		\item One may alternate between the stochastic representation in \eqref{eq:copula-stochastic} and the natural representation in \eqref{eq:copula-natural} by alternating between pmfs and ordinary moments of multivariate Bernoulli distributions as described in \cite{teugels1990representations}. 
	\end{enumerate}
\end{remark}

\subsection{Simulation}

Exact expressions for risk measures or optimization problems derived from multivariate stochastic models appear very rarely in practice. A strategy, prevalent in financial applications and quantitative risk management, is to resort to Monte Carlo simulation (see the preface of \cite{mai2012simulating} for a discussion). Such applications require efficient simulation algorithms in high dimensions. 

The main advantage of the family of copulas that we propose in this paper is that many risk measures are available in closed form. However, for practical applications, it is relevant to have a simulation procedure. Until recently, simulation from FGM copulas has relied on the conditional method (also called the Rosenblatt transform), see, e.g., \cite[Section 6.9.1]{joe2015DependenceModelingCopulas} for details on the method. The conditional inverse of $d$-variate FGM copulas are available in closed-form, hence one may simulate random vectors with FGM copulas using the conditional method, see \cite{ota2021effective} and \cite{blier-wong2022stochastic} for details. However, the conditional method does not scale well to high dimensions since one must successively sample from consecutive margins. High-dimensional simulation becomes feasible once one has a stochastic representation for the random vector $\boldsymbol{U}$. We present, in Algorithm \ref{algo:sample-stochastic}, a simulation procedure based on the representation in \eqref{eq:representation-u}.
\begin{algorithm}
	\KwIn{Number of simulations $n$, pmf $f_{\boldsymbol{I}}$}
	\KwOut{Set of simulations}
	\nl \For{$l = 1, \dots, n$}{
		\nl Generate $n$ independent random vectors $\boldsymbol{I}^{(l)}$, $\boldsymbol{U}_0^{(l)}$ and $\boldsymbol{U}_1^{(l)}$\;
		\nl \For{$m = 1, \dots, d$}{
			\nl Compute $U_{m}^{(l)}=\left(U_{0,m}^{(l)}\right)^{1-p_m}\left(U_{1,m}^{(l)}\right)^{I_m^{(l)}}$\; 
	}}
	\nl Return $\boldsymbol{U}^{(l)} = \left(U_1^{(l)}, \dots, U_d^{(l)}\right),  l = 1, \dots, n$.
	\caption{Stochastic simulation method} \label{algo:sample-stochastic}
\end{algorithm}

\subsection{Dependence ordering}\label{ss:dep-order}

To study the effect of dependence on a model, it is useful to use dependence orders. Such orders compare the strength of dependence between random vectors $\boldsymbol{U}$ and $\boldsymbol{U}'$. We consider four dependence orders that are relevant for copulas: the supermodular order (denoted $\preceq_{sm}$), the lower concordance order (denoted $\preceq_{cL}$), the upper concordance order (denoted $\preceq_{cU}$) and the concordance order (denoted $\preceq_{c}$). Details on dependence orders can be found in, e.g., \cite[Chapter 9]{shaked2007StochasticOrders}, \cite[Chapter 3]{muller2002ComparisonMethodsStochastic} or \cite[Chapter 6]{denuit2006ActuarialTheoryDependent}. The supermodular order is defined as follows.
\begin{definition}[Supermodular order] \label{defSupermodularOrder}
	We say that $\boldsymbol{U}$ is smaller than $\boldsymbol{U}'$ under the supermodular order if $E\left\{\phi (U_1, \dots, U_d)\right\} \leq E\left\{ \phi (U_1', \dots, U_d')\right\} $ for all supermodular functions $\phi $, given that the expectations exist. A function $\phi :\mathbb{R}^{d}\rightarrow \mathbb{R}$ is said to be supermodular if
	\begin{eqnarray*}
		&&\phi (x_{1},\ldots,x_{i}+\varepsilon ,\ldots,x_{j}+\delta ,\ldots,x_{d})-\phi
		(x_{1},\ldots,x_{i}+\varepsilon ,\ldots,x_{j},\ldots,x_{d}) \\
		&\geq &\phi (x_{1},\ldots,x_{i},\ldots,x_{j}+\delta ,\ldots,x_{d})-\phi
		(x_{1},\ldots,x_{i},\ldots,x_{j},\ldots,x_{d})
	\end{eqnarray*}
	holds for all $(x_1, \dots, x_d)\in \mathbb{R}^{d}$, $1\leq
	i< j\leq d$\ and all $\varepsilon$, $\delta >0$.
\end{definition}
Further, we say that $\boldsymbol{U}\preceq_{cL}\boldsymbol{U}'$ if $F_{\boldsymbol{U}}(\boldsymbol{u}) \leq F_{\boldsymbol{U}'}(\boldsymbol{u})$ for all $\boldsymbol{u}\in[0, 1]^d$, that $\boldsymbol{U}\preceq_{cU}\boldsymbol{U}'$ if $\overline{F}_{\boldsymbol{U}}(\boldsymbol{u}) \leq \overline{F}_{\boldsymbol{U}'}(\boldsymbol{u})$ for all $\boldsymbol{u}\in[0, 1]^d$. Further, we have $\boldsymbol{U}\preceq_{c}\boldsymbol{U}'$ if both $\boldsymbol{U}\preceq_{cL}\boldsymbol{U}'$ and $\boldsymbol{U}\preceq_{cU}\boldsymbol{U}'$. Also, $\boldsymbol{U}\preceq_{sm}\boldsymbol{U}'$ implies $\boldsymbol{U}\preceq_{c}\boldsymbol{U}'$. 

\begin{theorem}\label{thm:stochastic-orders}
Let $\boldsymbol{U}$ and $\boldsymbol{U}'$ be random vectors constructed using the representation in \eqref{eq:representation-u} and with the random vectors $\boldsymbol{I}$ and $\boldsymbol{I}'$ for some fixed $\boldsymbol{p}$. The following relationships hold:
	\begin{enumerate}
		\item If $\boldsymbol{I} \preceq_{sm} \boldsymbol{I}'$, then 
		$\boldsymbol{U} \preceq_{sm} \boldsymbol{U}'$.
		\item If $\boldsymbol{I} \preceq_{cU} \boldsymbol{I}'$, then $\boldsymbol{U} \preceq_{cU} \boldsymbol{U}'$.
		\item If $\boldsymbol{I} \preceq_{cL} \boldsymbol{I}'$, then $\boldsymbol{U} \preceq_{cL} \boldsymbol{U}'$.
		\item If $\boldsymbol{I} \preceq_{c} \boldsymbol{I}'$, then $\boldsymbol{U} \preceq_{c} \boldsymbol{U}'$.
	\end{enumerate}
\end{theorem}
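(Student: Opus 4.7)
The strategy throughout is to condition on $(\boldsymbol{U}_0,\boldsymbol{U}_1)$ and exploit the fact that, once those auxiliary vectors are frozen, the representation \eqref{eq:representation-u} writes each $U_m = U_{0,m}^{1-p_m}U_{1,m}^{I_m}$ as a deterministic, componentwise monotone function of $I_m$ alone. Since $U_{1,m}\in[0,1]$, this monotonicity is in a common direction (decreasing) across all coordinates, and this ``common direction'' is precisely what allows each of the four dependence orders to propagate from $\boldsymbol{I}$ to $\boldsymbol{U}$. Independence of $(\boldsymbol{I},\boldsymbol{I}')$ from $(\boldsymbol{U}_0,\boldsymbol{U}_1)$ is what legitimates pulling the comparison inside the outer integral at the end of each argument.

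For part (1), I would fix a supermodular test function $\phi$ and, on the event $\{\boldsymbol{U}_0=\boldsymbol{v},\boldsymbol{U}_1=\boldsymbol{w}\}$, introduce the composed function
\begin{equation*}
\tilde\phi(\boldsymbol{i}) = \phi\!\left(v_1^{1-p_1}w_1^{i_1},\dots,v_d^{1-p_d}w_d^{i_d}\right),\qquad \boldsymbol{i}\in\{0,1\}^d.
\end{equation*}
A short direct check from Definition \ref{defSupermodularOrder} shows that $\tilde\phi$ inherits supermodularity from $\phi$, because composing a supermodular function with componentwise monotone changes of variable in a common direction preserves supermodularity. Applying the hypothesis $\boldsymbol{I}\preceq_{sm}\boldsymbol{I}'$ to $\tilde\phi$ then gives the conditional inequality $E[\phi(\boldsymbol{U})\mid\boldsymbol{U}_0,\boldsymbol{U}_1]\le E[\phi(\boldsymbol{U}')\mid\boldsymbol{U}_0,\boldsymbol{U}_1]$ almost surely, and integrating yields $E[\phi(\boldsymbol{U})]\le E[\phi(\boldsymbol{U}')]$.

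For parts (2) and (3), I would write
\begin{equation*}
F_{\boldsymbol{U}}(\boldsymbol{u}) = E\!\left[\prod_{m=1}^d \mathbf{1}\{U_m\le u_m\}\right],\qquad \overline{F}_{\boldsymbol{U}}(\boldsymbol{u}) = E\!\left[\prod_{m=1}^d \mathbf{1}\{U_m>u_m\}\right],
\end{equation*}
and again condition on $(\boldsymbol{U}_0,\boldsymbol{U}_1)$. Each factor then becomes a $\{0,1\}$-valued monotone function of $I_m$ of the form $\alpha_m+\beta_m I_m$ with constants fixed by the realization. Expanding the $d$-fold product across $\boldsymbol{i}\in\{0,1\}^d$ and, where needed, performing the bit-flip change of variable $J_m=1-I_m$ to absorb any decreasing factor, rewrites the inner conditional expectation as a \emph{non-negative} linear combination of orthant probabilities $\Pr(I_m=\varepsilon_m,\,m\in S)$ indexed by $S\subseteq\{1,\dots,d\}$ and $\varepsilon_m\in\{0,1\}$. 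Since $\preceq_{cL}$ and $\preceq_{cU}$ on the Bernoulli vector $\boldsymbol{I}$ are characterized precisely by the lower- and upper-orthant probabilities respectively, the hypothesis delivers the conditional comparison, which is preserved by integration over $(\boldsymbol{U}_0,\boldsymbol{U}_1)$. Part (4) follows immediately from (2) and (3), as $\preceq_c$ is the conjunction of $\preceq_{cL}$ and $\preceq_{cU}$.

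The main obstacle will be the sign-bookkeeping in the expansion step. Because the map $\boldsymbol{I}\mapsto\boldsymbol{U}$ is coordinate-wise \emph{decreasing}, a direct polynomial expansion in $\boldsymbol{I}$ alone produces alternating signs that obstruct a term-by-term comparison, and one must pass through the complementary variable $J_m=1-I_m$ in each factor to obtain non-negative coefficients. This also makes visible which type of orthant probability on $\boldsymbol{I}$ (upper or lower) is paired with each half of the concordance order on $\boldsymbol{U}$; verifying that this pairing matches the direction of the hypothesis in (2) and (3) is the only delicate point of the argument.
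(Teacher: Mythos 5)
Your argument for part (1) is sound and is essentially the intended one (the paper itself only points to Theorem 4.2 of \cite{blier-wong2022stochastic} rather than writing the proof out): conditionally on $(\boldsymbol{U}_0,\boldsymbol{U}_1)$ each coordinate $U_m=U_{0,m}^{1-p_m}U_{1,m}^{I_m}$ is a monotone function of $I_m$ alone, all in a common direction, and supermodularity is preserved under such componentwise substitutions (since $g(x\vee y)=g(x)\wedge g(y)$ and $g(x\wedge y)=g(x)\vee g(y)$ when every $g_m$ is nonincreasing); independence of $\boldsymbol{I}$ from $(\boldsymbol{U}_0,\boldsymbol{U}_1)$ then lets you integrate the conditional inequality. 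Part (4), being the conjunction of (2) and (3), would also go through.

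The gap sits exactly at the point you defer as ``the only delicate point'': the pairing of orthant types in parts (2) and (3) does \emph{not} come out the way the statement requires, and your argument, carried to completion, proves the crossed implications instead. Because $U_{1,m}\in[0,1]$, the map $i_m\mapsto U_{0,m}^{1-p_m}U_{1,m}^{i_m}$ is \emph{decreasing}, so conditionally each indicator $\mathbf{1}\{U_m\le u_m\}$ is a nondecreasing $\{0,1\}$-valued function of $i_m$ (it equals $0$, $i_m$, or $1$), and the conditional lower orthant probability of $\boldsymbol{U}$ reduces to $\Pr(I_m=1,\ m\in S)$ for some subset $S$ --- an \emph{upper} orthant probability of $\boldsymbol{I}$. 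Symmetrically, the conditional survival probability of $\boldsymbol{U}$ reduces to $\Pr(I_m=0,\ m\in S)$, a \emph{lower} orthant probability of $\boldsymbol{I}$; the bit-flip $J_m=1-I_m$ only renames this quantity, it does not change which orthant of $\boldsymbol{I}$ is being compared. Hence the conditioning argument yields $\boldsymbol{I}\preceq_{cU}\boldsymbol{I}'\Rightarrow\boldsymbol{U}\preceq_{cL}\boldsymbol{U}'$ and $\boldsymbol{I}\preceq_{cL}\boldsymbol{I}'\Rightarrow\boldsymbol{U}\preceq_{cU}\boldsymbol{U}'$, not statements (2) and (3) as written. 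For $d=2$ the two orders coincide because the margins are fixed, but for $d\ge3$ they differ, and the uncrossed version is not recoverable by this route: starting from three independent symmetric Bernoulli variables and increasing only $\Pr(\boldsymbol{I}=\boldsymbol{1})$ while holding all univariate and bivariate ``all-ones'' probabilities fixed gives $\boldsymbol{I}\preceq_{cU}\boldsymbol{I}'$ together with $\Pr(\boldsymbol{I}'=\boldsymbol{0})<\Pr(\boldsymbol{I}=\boldsymbol{0})$, and then $\overline{F}_{\boldsymbol{U}'}(\boldsymbol{u})<\overline{F}_{\boldsymbol{U}}(\boldsymbol{u})$ on a set of $\boldsymbol{u}$ of positive measure. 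The crossing does not occur in the cited Theorem 4.2 because there the representation $\boldsymbol{X}=\boldsymbol{Y}_0+\boldsymbol{I}\boldsymbol{Y}_1$ is componentwise \emph{increasing} in $\boldsymbol{I}$; under the representation \eqref{eq:representation-u} it is decreasing, so a complete proof must either swap the hypotheses in (2) and (3) or restate them in terms of $\boldsymbol{1}-\boldsymbol{I}$. You need to do this bookkeeping explicitly and confront the mismatch rather than leaving it unverified.
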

\begin{proof}
	The proof is identical to the proof of Theorem 4.2 of \cite{blier-wong2022stochastic}.
\end{proof}

Theorem \ref{thm:stochastic-orders} has many implications for applications of copulas, see, for instance, \cite{li2013stochastic}, \cite[Chapter 6]{denuit2006ActuarialTheoryDependent} or \cite[Chapter 8]{muller2002ComparisonMethodsStochastic}. 

An important result in supermodular ordering (see, e.g., \cite{dhaene2002ConceptComonotonicityActuarial}, \cite[Section 6.3.7]{denuit2006ActuarialTheoryDependent}, \cite[page 119]{ruschendorf2013MathematicalRiskAnalysis} for discussions) is that the comonotonic random vector, denoted $\boldsymbol{I}^c$, corresponds to the most positively dependent random vector under the supermodular order, that is, for some $V \sim Unif(0, 1)$, we have
\begin{equation}\label{eq:i-sm}
	\boldsymbol{I} \preceq_{sm} \boldsymbol{I}^c = (F_{I_1}^{-1}(V), \dots, F^{-1}_{I_d}(V))
\end{equation}
for all $\boldsymbol{I}$. The following theorem presents the stochastic representation for the largest random vector, for a given set of shape parameters, within the class of copulas in $\mathcal{C}^{GFGM}$. 
\begin{theorem}
	Fix some vector of probabilities $\boldsymbol{p}$ and let $\boldsymbol{I}^{c}$ be a vector of comonotonic Bernoulli rvs. Define the random vector 
		\begin{equation}\label{key}
				\boldsymbol{U}^{EPD-GFGM} = \boldsymbol{U}_0^{\boldsymbol{1}-\boldsymbol{p}} \boldsymbol{U}_1 ^{\boldsymbol{I^c}}.
		\end{equation}
	Let $F_{\boldsymbol{U}^{EPD-GFGM}} = C^{EPD-GFGM}$. Then, for all $C \in \mathcal{C}^{GFGM}$ with a vector of probabilities $\boldsymbol{p}$ and $\boldsymbol{U}$, the random vector associated with the copula $C$, we have
	$$\boldsymbol{U} \preceq_{sm}\boldsymbol{U}^{EPD-GFGM}.$$
\end{theorem}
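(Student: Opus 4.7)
The plan is to combine two ingredients that are already available in the excerpt: the invariance of the supermodular order under the map $\boldsymbol{I} \mapsto \boldsymbol{U}$ described in \eqref{eq:representation-u} (Theorem \ref{thm:stochastic-orders}, part 1), and the extremality of the comonotonic random vector under $\preceq_{sm}$ among random vectors with fixed margins, as recalled in \eqref{eq:i-sm}.

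First I would fix the shape parameter $\boldsymbol{p}$ and let $\boldsymbol{U} \in \mathcal{C}^{GFGM}$ be associated with some Bernoulli vector $\boldsymbol{I}$ whose marginal success probabilities are $\boldsymbol{p}$. By construction, each $I_j$ has the same Bernoulli$(p_j)$ distribution as the corresponding component $I_j^c$ of $\boldsymbol{I}^c$, so the two vectors $\boldsymbol{I}$ and $\boldsymbol{I}^c$ share identical one-dimensional margins. Then I would invoke \eqref{eq:i-sm}, which asserts that among all random vectors with the same univariate margins, the comonotonic one is the supermodular maximum; this gives $\boldsymbol{I} \preceq_{sm} \boldsymbol{I}^c$.

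Second, I would apply Theorem \ref{thm:stochastic-orders}(1) to the pair $(\boldsymbol{I}, \boldsymbol{I}^c)$: since both $\boldsymbol{U}$ and $\boldsymbol{U}^{EPD-GFGM}$ are built from the same vector of shape parameters $\boldsymbol{p}$ and from independent copies of $\boldsymbol{U}_0$ and $\boldsymbol{U}_1$ via the common rule \eqref{eq:representation-u}, the Bernoulli-level inequality $\boldsymbol{I} \preceq_{sm} \boldsymbol{I}^c$ lifts directly to $\boldsymbol{U} \preceq_{sm} \boldsymbol{U}^{EPD-GFGM}$. Since $\boldsymbol{U}$ was an arbitrary element of $\mathcal{C}^{GFGM}$ with shape parameter $\boldsymbol{p}$, this concludes the proof.

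Overall the argument is essentially a two-line reduction. There is no real obstacle once Theorem \ref{thm:stochastic-orders} is granted; the only subtle point worth stating explicitly is that the comparison is valid because the Bernoulli margins of $\boldsymbol{I}$ and of $\boldsymbol{I}^c$ coincide componentwise (both are Bernoulli$(p_j)$), which is exactly what makes \eqref{eq:i-sm} applicable. No calculation with the copula itself is needed.
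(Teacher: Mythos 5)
Your proposal is correct and follows exactly the paper's own argument: the paper likewise deduces the result by combining the comonotonic supermodular upper bound in \eqref{eq:i-sm} with the lifting property of Theorem \ref{thm:stochastic-orders}. Your version merely spells out the (valid) observation that the margins of $\boldsymbol{I}$ and $\boldsymbol{I}^c$ coincide, which is the hypothesis needed for \eqref{eq:i-sm}.
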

\begin{proof}
	The result follows from Theorem \ref{thm:stochastic-orders} and the relation in \eqref{eq:i-sm}.
\end{proof}

In particular, if $p_1 = \dots = p_d = p$, then the pmf of $\boldsymbol{I}^c$ is $\Pr(\boldsymbol{I}^c = \boldsymbol{0}) = 1-p$, $\Pr(\boldsymbol{I}^c = \boldsymbol{1}) = p$ and zero elsewhere. It follows that the copula associated with $\boldsymbol{U}^{EPD-GFGM}$ simplifies to 
\begin{equation}\label{eq:copula-epd}
	C^{EPD}(\boldsymbol{u}) = (1-p)\prod_{m = 1}^{d} u_m ^{1/(1-p)} + p\prod_{m = 1}^{d} \left(\frac{p-1}{p} u_m^{1/(1-p)} -\frac{u_m}{p}\right), \quad \boldsymbol{u} \in [0,1]^d.
\end{equation}

\subsection{Association measures}\label{ss:association}

When dealing with $d$-variate random vectors, it is useful to quantify the degree to which the rvs are associated. Four important measures of multivariate association are $\rho^{cL}$, Spearman's rho derived from average upper orthant dependence $\rho^{cU}$ and Kendall's tau derived from multiplicative total positivity of order 2. Their expressions are given in the following definition. 
\begin{definition}
	Let $C^{\perp}$ be the independence copula, that is, $C^{\perp}(u_1, \dots, u_d) = u_1 \dots u_d$, for all $(u_1, \dots, u_d)\in [0, 1]^d$. Following \cite{nelsen1996NonparametricMeasuresMultivariate} and \cite{nelsen2002ConcordanceCopulasSurvey}, we define four measures of multivariate association:
	\begin{enumerate}
		\item Spearman's rho derived from average lower orthant dependence
		\begin{equation}
			\rho^{cL}(\boldsymbol{U}) = \frac{d + 1}{2^{d}-d-1} \left[2^d\left(\int_{[0, 1]^d} C(\boldsymbol{u}) \diff C^{\perp}(\boldsymbol{u})\right) - 1\right].\label{eq:def-rhocL}
		\end{equation}
		\item Spearman's rho derived from average upper orthant dependence
		\begin{equation}
			\rho^{cU}(\boldsymbol{U}) = \frac{d + 1}{2^{d}-d-1} \left[2^d\left(\int_{[0, 1]^d} C^{\perp}(\boldsymbol{u}) \diff C(\boldsymbol{u})\right) - 1\right].\label{eq:def-rhocU}
		\end{equation}
		\item Spearman's rho derived from concordant dependence
		\begin{equation}
			\rho^{c}(\boldsymbol{U}) = \frac{1}{2} \left(\rho^{cL} + \rho^{cU}\right)
		\end{equation}
		\item Kendall's tau derived from multiplicative total positivity of order 2
		\begin{equation}
			\tau(\boldsymbol{U}) = \frac{1}{2^{d - 1} - 1} \left\{2^d \int_{[0, 1]^d}C(u) \diff C(u) - 1\right\}.\label{eq:def-tau}
		\end{equation}
	\end{enumerate}
\end{definition} 

The family of copulas studied in this paper admits exact expressions for the four measures of multivariate association.
\begin{proposition}\label{prop:measures-association}
	Let $\boldsymbol{U}$ be a $d$-variate random vector with copula as in \eqref{eq:copula-stochastic}. Then, the measures of multivariate association $\rho^{cL}$, $\rho^{cU}$, $\rho^{c}$ and $\tau$ of $\boldsymbol{U}$ are given by
	\begin{align}
		\rho^{cL}(\boldsymbol{U})&= \frac{d + 1}{2^d - d - 1} \left(\left\{\prod_{m = 1}^{d} \frac{1}{2-p_m}\right\}E\left[ \prod_{m = 1}^d \left(2(1-p_m) + I_m\right)\right] - 1\right);\label{eq:rhocL}\\
		\rho^{cU}(\boldsymbol{U}) &= \frac{d + 1}{2^d - d - 1} \left(\left\{\prod_{m = 1}^{d} \frac{1}{2-p_m}\right\} E\left[\prod_{m = 1}^{d}(2-I_m)\right] - 1\right);\label{eq:rhocU}\\
		\rho^{c}(\boldsymbol{U}) &= \frac{d + 1}{2^d - d - 1} \left(\left\{\prod_{m = 1}^{d} \frac{1}{2-p_m}\right\} \frac{1}{2}\left(E\left[ \prod_{m = 1}^d \left(2(1-p_m) + I_m\right) + \prod_{m = 1}^{d}(2-I_m)\right]\right) - 1\right);\label{eq:rhoc}\\
		\tau(\boldsymbol{U}) &= \frac{1}{2^{d-1}-1} \left(2^d \sum_{\boldsymbol{i}\in\{0, 1\}^d} \sum_{\boldsymbol{j}\in\{0, 1\}^d} f_{\boldsymbol{I}}(\boldsymbol{i})f_{\boldsymbol{I}}(\boldsymbol{j})\prod_{m = 1}^{d} \left(\frac{1}{2} - \frac{i_m + j_m}{2p_m} + \frac{j_m(1-p_m) + i_m}{p_m(2-p_m)}\right) - 1\right).\label{eq:tau}
	\end{align}
\end{proposition}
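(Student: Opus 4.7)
The plan is to evaluate directly the four integrals $\int C\diff C^{\perp}$, $\int C^{\perp}\diff C$, $\int C\diff C$ appearing in the definitions \eqref{eq:def-rhocL}--\eqref{eq:def-tau} and plug the results into the formulas. Throughout, I would alternate between the stochastic representation \eqref{eq:representation-u} and the factorized form for $C$ implicit in \eqref{eq:copula-stochastic}, using whichever is more convenient.

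For $\rho^{cL}$, I would substitute the expression of $C(\boldsymbol{u})$ from \eqref{eq:copula-stochastic} into $\int C(\boldsymbol{u})\diff\boldsymbol{u}$. Fubini allows me to pass the expectation over $\boldsymbol{I}$ outside the iterated integrals over $\boldsymbol{u}$, after which the $d$ univariate integrals decouple. Using $\int_0^1 u_m^{(1-p_m)^{-1}}\diff u_m = (1-p_m)/(2-p_m)$ and $\int_0^1 u_m\diff u_m = 1/2$, the $m$-th factor simplifies to $(2(1-p_m)+I_m)/(2(2-p_m))$. Taking the product across $m$ and multiplying by $2^d$ delivers exactly the bracket in \eqref{eq:rhocL}.

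For $\rho^{cU}$, I would use $\int C^{\perp}\diff C = E[U_1\cdots U_d]$ where $\boldsymbol{U}\sim C$ combined with \eqref{eq:representation-u}. Mutual independence of $\boldsymbol{I},\boldsymbol{U}_0,\boldsymbol{U}_1$ permits the factorization $E[\prod_m U_{0,m}^{1-p_m}U_{1,m}^{I_m}] = \prod_m E[U_{0,m}^{1-p_m}]\cdot E_{\boldsymbol{I}}[\prod_m E[U_{1,m}^{I_m}\mid I_m]] = \prod_m (2-p_m)^{-1}\cdot E_{\boldsymbol{I}}[\prod_m (2-I_m)/2]$, producing \eqref{eq:rhocU}. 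The formula for $\rho^c$ then follows immediately from its definition.

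The main obstacle is $\tau$, where both factors of the integrand are copulas. I would first rewrite the copula in the expanded form $C(\boldsymbol{u}) = \sum_{\boldsymbol{i}\in\{0,1\}^d}f_{\boldsymbol{I}}(\boldsymbol{i})\prod_m g_{i_m,m}(u_m)$, where $g_{0,m}(u)=u^{1/(1-p_m)}$ and $g_{1,m}(u)=(u-(1-p_m)u^{1/(1-p_m)})/p_m$. Differentiation yields the copula density $c(\boldsymbol{u}) = \sum_{\boldsymbol{j}}f_{\boldsymbol{I}}(\boldsymbol{j})\prod_m g'_{j_m,m}(u_m)$. Swapping the two sums with the integral, $\int C\,c\diff\boldsymbol{u}$ becomes a double sum over $(\boldsymbol{i},\boldsymbol{j})$ of $f_{\boldsymbol{I}}(\boldsymbol{i})f_{\boldsymbol{I}}(\boldsymbol{j})$ weighted by $\prod_m \int_0^1 g_{i_m,m}(u)g'_{j_m,m}(u)\diff u$. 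The two diagonal integrals $(i,j)\in\{(0,0),(1,1)\}$ both equal $1/2$ by the boundary identity $\int_0^1 g\,g'\diff u = (g(1)^2-g(0)^2)/2$, since one checks that both $g_{0,m}$ and $g_{1,m}$ vanish at $u=0$ and equal $1$ at $u=1$. The two off-diagonal integrals are elementary but require direct calculation; they evaluate to $(1-p_m)/(2(2-p_m))$ for $(i,j)=(0,1)$ and $(3-p_m)/(2(2-p_m))$ for $(i,j)=(1,0)$. The remaining step is the algebraic verification that these four case values all agree with the single unified expression $\frac{1}{2}-(i_m+j_m)/(2p_m)+(j_m(1-p_m)+i_m)/(p_m(2-p_m))$ appearing in \eqref{eq:tau}; this is a routine check of four polynomial identities in $p_m$ that closes the proof.
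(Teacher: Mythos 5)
Your proposal is correct and, for $\rho^{cL}$, $\rho^{c}$ and $\tau$, follows the same route as the paper: push the expectation (or the double sum over $\boldsymbol{i},\boldsymbol{j}$) outside the integral, decouple into univariate integrals of powers, and simplify. Your treatment of $\tau$ is in fact more explicit than the paper's, which stops at ``solving the integral yields the desired result''; your observation that the two diagonal cases reduce to $\int_0^1 g\,g'\diff u = \tfrac{1}{2}\bigl(g(1)^2-g(0)^2\bigr) = \tfrac12$ via the boundary values $g_{i,m}(0)=0$, $g_{i,m}(1)=1$, and your two off-diagonal values $\tfrac{1-p_m}{2(2-p_m)}$ and $\tfrac{3-p_m}{2(2-p_m)}$, all check out against the unified expression in \eqref{eq:tau}. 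The one genuine departure is $\rho^{cU}$: the paper applies the chain rule to get the copula density and integrates $C^{\perp}$ against it, whereas you evaluate $\int C^{\perp}\diff C = E[U_1\cdots U_d]$ directly from the stochastic representation \eqref{eq:representation-u}, using independence of $\boldsymbol{I}$, $\boldsymbol{U}_0$, $\boldsymbol{U}_1$ and the moments $E[U_{0,m}^{1-p_m}]=(2-p_m)^{-1}$ and $E[U_{1,m}^{I_m}\mid I_m]=(2-I_m)/2$. This is cleaner, avoids differentiating the copula altogether, and is very much in the spirit of the paper's stated philosophy of exploiting the stochastic representation rather than the algebraic form; the paper's density-based computation, on the other hand, sets up the machinery that it reuses immediately afterwards for $\tau$, where the probabilistic shortcut is not available.
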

\begin{proof}
	The integral in \eqref{eq:def-rhocL} is 
	\begin{align}
		\int_{[0, 1]^d} C(\boldsymbol{u}) \diff C^{\perp}(\boldsymbol{u}) &= \int_{[0, 1]^d} E\left[\prod_{m = 1}^{d}\left(u_m ^{(1-p_m)^{-1}} - I_m \left\{\frac{u_m^{(1-p_m)^{-1}} - u_m}{p_m}\right\}\right)\right] \diff C^{\perp}(\boldsymbol{u})\nonumber\\
		&= E\left[\prod_{m = 1}^{d} \int_{0}^{1}\left(u_m ^{(1-p_m)^{-1}} - I_m \left\{\frac{u_m^{(1-p_m)^{-1}} - u_m}{p_m}\right\}\right) \diff u_m\right]\nonumber\\
		&= E\left[\prod_{m = 1}^{d} 
		\left(\frac{1}{(1-p_m)^{-1}+1} - I_m \left\{\frac{\frac{1}{(1-p_m)^{-1}+1} - \frac{1}{2}}{p_m}\right\}\right)
		\right].\label{eq:int-rhom}
	\end{align}
	We obtain \eqref{eq:rhocL} by replacing the integral in \eqref{eq:def-rhocL} by \eqref{eq:int-rhom} and simple calculations. To solve the integral in \eqref{eq:rhocU}, we use the chain rule and obtain
	$$\int_{[0, 1]^d} C^{\perp}(\boldsymbol{u}) \diff C(\boldsymbol{u}) = \int_{[0, 1]^d} C^{\perp}(\boldsymbol{u}) E\left[\prod_{m = 1}^{d}\left(\frac{u_m ^{(1-p_m)^{-1}-1}}{1-p_m} - I_m \left\{\frac{\frac{u_m^{(1-p_m)^{-1}-1}}{1-p_m} - 1}{p_m}\right\}\right)\right]\diff C^{\perp}(\boldsymbol{u}).$$
	Inserting the integral within the expectation, we have	
	$$\int_{[0, 1]^d} C^{\perp}(\boldsymbol{u}) \diff C(\boldsymbol{u}) =  E\left[\prod_{m = 1}^{d} \int_{0}^{1}\left(\frac{u_m ^{(1-p_m)^{-1}}}{1-p_m} - I_m \left\{\frac{\frac{u_m^{(1-p_m)^{-1}}}{1-p_m} - u_m}{p_m}\right\}\right)\diff u_m\right]$$
	The remaining integral involes integrating a power and is simple to solve. The result in \eqref{eq:rhoc} is the average of \eqref{eq:rhocL} and \eqref{eq:rhocU}. Finally, we apply once again the clain rule to the the integral in \eqref{eq:def-tau} and obtain
	\begin{align*}
		\int_{[0, 1]^d}C(u) \diff C(u) &= \int_{[0, 1]^d} E\left[\prod_{m = 1}^{d}\left(u_m ^{(1-p_m)^{-1}} - I_m \left\{\frac{u_m^{(1-p_m)^{-1}} - u_m}{p_m}\right\}\right)\right] \times \\
		&  \qquad\qquad E\left[\prod_{m = 1}^{d}\left(\frac{u_m ^{(1-p_m)^{-1}-1}}{1-p_m} - I_m \left\{\frac{\frac{u_m^{(1-p_m)^{-1}-1}}{1-p_m} - 1}{p_m}\right\}\right)\right]\diff C^{\perp}(\boldsymbol{u}) \\
		&= \int_{[0, 1]^d} \left(\sum_{\boldsymbol{i}\in\{0, 1\}^d}
		f_{\boldsymbol{I}}(\boldsymbol{i})\prod_{m = 1}^{d}\left(u_m ^{(1-p_m)^{-1}} - i_m \left\{\frac{u_m^{(1-p_m)^{-1}} - u_m}{p_m}\right\}\right)\right)\times\\
		& \qquad\qquad \left(\sum_{\boldsymbol{i}\in\{0, 1\}^d} f_{\boldsymbol{I}}(\boldsymbol{j})
		\prod_{m = 1}^{d}\left(\frac{u_m ^{(1-p_m)^{-1}-1}}{1-p_m} - j_m \left\{\frac{\frac{u_m^{(1-p_m)^{-1}-1}}{1-p_m} - 1}{p_m}\right\}\right)\right)\diff C^{\perp}(\boldsymbol{u}).	
	\end{align*}
	Expanding the series, we have
	\begin{align*}
		\int_{[0, 1]^d}C(u) \diff C(u) &= \int_{[0, 1]^d} \sum_{\boldsymbol{i}\in\{0, 1\}^d}\sum_{\boldsymbol{j}\in\{0, 1\}^d}
		f_{\boldsymbol{I}}(\boldsymbol{i})f_{\boldsymbol{I}}(\boldsymbol{j})
		\left[\prod_{m = 1}^{d}\left(u_m ^{(1-p_m)^{-1}} - i_m \left\{\frac{u_m^{(1-p_m)^{-1}} - u_m}{p_m}\right\}\right) \times \vphantom{\left\{\frac{\frac{u_m^{(1-p_m)^{-1}-1}}{1-p_m} - 1}{p_m}\right\}}\right. \\
		& \qquad \qquad 
		\left.\prod_{m = 1}^{d}\left(\frac{u_m ^{(1-p_m)^{-1}-1}}{1-p_m} - j_m \left\{\frac{\frac{u_m^{(1-p_m)^{-1}-1}}{1-p_m} - 1}{p_m}\right\}\right)\right]\diff C^{\perp}(\boldsymbol{u})\\
		&= \sum_{\boldsymbol{i}\in\{0, 1\}^d}\sum_{\boldsymbol{j}\in\{0, 1\}^d}f_{\boldsymbol{I}}(\boldsymbol{i})f_{\boldsymbol{I}}(\boldsymbol{j})
		\prod_{m = 1}^{d} \int_{0}^{1}\left[\left(u_m ^{(1-p_m)^{-1}} - i_m \left\{\frac{u_m^{(1-p_m)^{-1}} - u_m}{p_m}\right\}\right) \vphantom{\left\{\frac{\frac{u_m^{(1-p_m)^{-1}-1}}{1-p_m} - 1}{p_m}\right\}}\right.\times\\
		& \left.\qquad\qquad \left(\frac{u_m ^{(1-p_m)^{-1}-1}}{1-p_m} - j_m \left\{\frac{\frac{u_m^{(1-p_m)^{-1}-1}}{1-p_m} - 1}{p_m}\right\}\right)\right] \diff u_m.
	\end{align*}
	Solving the integral, replacing in \eqref{eq:def-tau} and simplifying yields the desired result. 
\end{proof}

The following result establishes the link between a dependence order and a corresponding measure of multivariate association, and can be found in \cite{joe1990multivariate, nelsen1996NonparametricMeasuresMultivariate,
	schmid2010copula, gijbels2021SpecificationMultivariateAssociation}.
\begin{theorem}
	Fix some vector of $\boldsymbol{p}$, then $\boldsymbol{U}\preceq_{cL}\boldsymbol{U}'$ implies $\rho^{cL}(\boldsymbol{U})\leq \rho^{cL}(\boldsymbol{U}')$, that $\boldsymbol{U}\preceq_{cU}\boldsymbol{U}'$ implies $\rho^{cU}(\boldsymbol{U})\leq \rho^{cU}(\boldsymbol{U}')$ that $\boldsymbol{U}\preceq_{c}\boldsymbol{U}'$ implies $\rho^{c}(\boldsymbol{U})\leq \rho^{c}(\boldsymbol{U}')$ and $\tau(\boldsymbol{U})\leq \tau(\boldsymbol{U}')$.
\end{theorem}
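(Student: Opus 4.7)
The plan is to treat each of the four measures separately. The first three reduce to direct monotonicity of the integrals defining $\rho^{cL}$, $\rho^{cU}$ and $\rho^{c}$, while Kendall's $\tau$ is the only case needing a genuinely nontrivial argument, and there I invoke the standard result from the cited literature.

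For $\rho^{cL}$, I observe that $C^{\perp}(\boldsymbol{u})=u_{1}\cdots u_{d}$, so $\diff C^{\perp}$ equals Lebesgue measure on $[0,1]^{d}$ and $\int C\diff C^{\perp} = \int_{[0,1]^{d}} C(\boldsymbol{u})\diff\boldsymbol{u}$. The assumption $\boldsymbol{U}\preceq_{cL}\boldsymbol{U}'$ gives $C\leq C'$ pointwise, and since the prefactor $(d+1)2^{d}/(2^{d}-d-1)$ in \eqref{eq:def-rhocL} is positive, the conclusion is immediate. For $\rho^{cU}$, Fubini's theorem converts $\int C^{\perp}\diff C = E[U_{1}\cdots U_{d}]$ into $\int_{[0,1]^{d}}\overline{C}(\boldsymbol{u})\diff\boldsymbol{u}$, via the $[0,1]^{d}$-valued tail formula $E[\prod_{m}U_{m}]=\int P(\boldsymbol{U}>\boldsymbol{u})\diff\boldsymbol{u}$. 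Then $\boldsymbol{U}\preceq_{cU}\boldsymbol{U}'$ gives $\overline{C}\leq \overline{C}'$ pointwise, and the same monotonicity argument closes the case. The inequality for $\rho^{c}$ follows by averaging the two.

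The genuine obstacle is $\tau$, because in $\int C\diff C$ both the integrand and the integrating measure depend on $C$. The decomposition
\begin{equation*}
\int C'\diff C' - \int C\diff C = \int (C'-C)\diff C' + \int C\diff (C'-C)
\end{equation*}
is suggestive: the first summand is non-negative by $\boldsymbol{U}\preceq_{cL}\boldsymbol{U}'$, while the second requires a multivariate integration by parts to recast $\diff (C'-C)$ in terms of $\overline{C}'-\overline{C}$, so that $\boldsymbol{U}\preceq_{cU}\boldsymbol{U}'$ can be exploited. Carrying this integration by parts out cleanly in arbitrary dimension $d$ is the main technical difficulty, but the overall statement that $\boldsymbol{U}\preceq_{c}\boldsymbol{U}'$ implies $\int C\diff C\leq \int C'\diff C'$ is established in \cite{joe1990multivariate, nelsen1996NonparametricMeasuresMultivariate, schmid2010copula, gijbels2021SpecificationMultivariateAssociation}; I would invoke it directly and conclude using that the prefactor $2^{d}/(2^{d-1}-1)$ in \eqref{eq:def-tau} is positive.
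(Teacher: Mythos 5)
Your proposal is correct, and it actually does more work than the paper, which offers no proof at all for this theorem: the authors simply state it as a known result and point to \cite{joe1990multivariate, nelsen1996NonparametricMeasuresMultivariate, schmid2010copula, gijbels2021SpecificationMultivariateAssociation}. Your arguments for the first three measures are sound and elementary: $\int C\diff C^{\perp}=\int_{[0,1]^d}C(\boldsymbol{u})\diff\boldsymbol{u}$ is monotone in $C$ under $\preceq_{cL}$; the identity $\int C^{\perp}\diff C=E[U_1\cdots U_d]=\int_{[0,1]^d}\Pr(\boldsymbol{U}>\boldsymbol{u})\diff\boldsymbol{u}$ (Fubini applied to $\prod_m U_m=\int_{[0,1]^d}\prod_m\mathbf{1}(U_m>u_m)\diff\boldsymbol{u}$) reduces the $\rho^{cU}$ case to monotonicity of the survival function, which is exactly what $\preceq_{cU}$ gives; and $\rho^{c}$ follows by averaging since $\preceq_{c}$ entails both orders. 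For $\tau$ you correctly identify the genuine obstacle --- in $\int C\diff C$ the measure itself varies, so the decomposition $\int(C'-C)\diff C'+\int C\diff(C'-C)$ handles only the first summand directly and the second needs a multivariate integration by parts --- and you defer to the same references the paper cites, which is legitimate and is precisely where the paper places the entire burden. The only caveat worth recording is that your argument for $\tau$ requires the full concordance order $\preceq_{c}$ (both $\preceq_{cL}$ and $\preceq_{cU}$), but that matches the hypothesis of the theorem for the $\tau$ claim, so nothing is lost. Net effect: your write-up is a strict refinement of the paper's treatment, replacing a bare citation with self-contained proofs for three of the four assertions.
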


The expressions of the measures of multivariate association in Proposition \ref{prop:measures-association} can become computationally tedious in increasing dimensions. That is, if $f_{\boldsymbol{I}}(\boldsymbol{i})$ is non-zero for every $\boldsymbol{i}\in \{0, 1\}^d$, then computing $\rho^{cL}$, $\rho^{cU}$ and $\rho^{c}$ requires summing over $2^d$ terms, while computing $\tau$ requires summing over $2^{2d}$ terms. Fortunately, many distributions of interest have non-zero masses for a limited number of outcomes, thereby accelerating computations. One such example is the EPD copula when $p_1 = \dots = p_d = p$, whose expression we provided in \eqref{eq:copula-epd}. Define $\mathcal{C}^{GFGM(p)}$ as the subfamily in $\mathcal{C}^{GFGM}$ where $p_1 = \dots = p_d = p$. In this case, we find the following expressions.
\begin{proposition}\label{prop:association-measure-max}
The maximal value of $\rho^{cL}$, $\rho^{cU}$, $\rho^{c}$ and $\tau$ for copulas within $\mathcal{C}^{GFGM(p)}$ is 
$$\rho^{cL}(\boldsymbol{U}^{EPD}) = \frac{d + 1}{2^d - d - 1} \left(\left(\frac{2}{2-p}\right)^d \left[(1-p)(1-p)^{d} + p\left(\frac{3 - 2p}{2}\right)^d \right] - 1\right);$$
$$\rho ^{cU}(\boldsymbol{U}^{EPD}) =  \frac{d + 1}{2^d - d - 1} \left(\left(\frac{2}{2-p}\right)^d\left(1 - p + \frac{p}{2^d}\right) -1\right);$$
$$\rho^c(\boldsymbol{U}^{EPD}) = \frac{d + 1}{2^d - d - 1} \left( \left(\frac{2}{2-p}\right)^d\frac{1}{2}\left\{(1-p)^{d+1}+p\left(\frac{3-2p}{2}\right)^d+1-p+\frac{p}{2^d}\right\} -1\right);$$
$$\tau(\boldsymbol{U}^{EPD}) = \frac{p(1-p)}{2^{d - 1} - 1} \frac{(3-p)^d - 2(2-p)^d + (1-p)^d}{(2-p)^d}.$$
\end{proposition}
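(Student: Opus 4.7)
The plan is to specialize the general expressions of Proposition \ref{prop:measures-association} to the case where $p_1 = \dots = p_d = p$ and $\boldsymbol{I} = \boldsymbol{I}^c$, the comonotonic Bernoulli vector. Since the margins are identically distributed, the pmf of $\boldsymbol{I}^c$ collapses to two atoms: $f_{\boldsymbol{I}^c}(\boldsymbol{0}) = 1-p$ and $f_{\boldsymbol{I}^c}(\boldsymbol{1}) = p$, with zero mass elsewhere. This reduces every expectation and every sum in Proposition \ref{prop:measures-association} to a handful of terms, so the proof is essentially a substitution followed by algebraic simplification.

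First I would handle $\rho^{cL}$. With $p_m = p$, the prefactor $\prod_{m=1}^d(2-p_m)^{-1}$ equals $(2-p)^{-d}$. The expectation $E\bigl[\prod_{m=1}^d(2(1-p) + I_m^c)\bigr]$ takes value $(2(1-p))^d = 2^d(1-p)^d$ when $\boldsymbol{I}^c = \boldsymbol{0}$ and value $(3-2p)^d$ when $\boldsymbol{I}^c = \boldsymbol{1}$, giving
\[
E\Bigl[\prod_{m=1}^d(2(1-p) + I_m^c)\Bigr] = (1-p)\,2^d(1-p)^d + p(3-2p)^d.
\]
Dividing by $(2-p)^d$ and pulling out $2^d/(2-p)^d = (2/(2-p))^d$ yields the stated expression for $\rho^{cL}(\boldsymbol{U}^{EPD})$. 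The computation for $\rho^{cU}$ is analogous: $E\bigl[\prod_{m=1}^d(2-I_m^c)\bigr] = (1-p)\,2^d + p\cdot 1$, and factoring out $(2/(2-p))^d$ produces the announced formula. The formula for $\rho^c$ is then simply the average of $\rho^{cL}$ and $\rho^{cU}$, as per its definition.

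The main obstacle is the expression for $\tau$, since it involves a double sum over $\{0,1\}^d\times\{0,1\}^d$. Because of comonotonicity, only four of the outer terms survive: $(\boldsymbol{i},\boldsymbol{j}) \in \{(\boldsymbol{0},\boldsymbol{0}),(\boldsymbol{0},\boldsymbol{1}),(\boldsymbol{1},\boldsymbol{0}),(\boldsymbol{1},\boldsymbol{1})\}$, with respective weights $(1-p)^2$, $p(1-p)$, $p(1-p)$, $p^2$. For each such term I would evaluate the factor
\[
g(i,j) = \frac{1}{2} - \frac{i+j}{2p} + \frac{j(1-p) + i}{p(2-p)}
\]
at $(i,j)\in\{0,1\}^2$. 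Putting over a common denominator $2p(2-p)$ and simplifying gives $g(0,0) = g(1,1) = 1/2$, $g(0,1) = (1-p)/(2(2-p))$, and $g(1,0) = (3-p)/(2(2-p))$. Since all margins contribute the same factor, raising to the $d$-th power and collecting yields
\[
\sum_{\boldsymbol i,\boldsymbol j}f_{\boldsymbol I^c}(\boldsymbol i)f_{\boldsymbol I^c}(\boldsymbol j)\prod_{m=1}^d g(i_m,j_m) = \frac{(1-p)^2 + p^2}{2^d} + \frac{p(1-p)\bigl((1-p)^d + (3-p)^d\bigr)}{2^d(2-p)^d}.
\]
Multiplying by $2^d$, subtracting $1$, and using $(1-p)^2 + p^2 - 1 = -2p(1-p)$ collapses the whole numerator into $p(1-p)\bigl[(1-p)^d + (3-p)^d - 2(2-p)^d\bigr]/(2-p)^d$. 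Dividing by $2^{d-1}-1$ delivers the compact expression announced for $\tau(\boldsymbol{U}^{EPD})$. The only delicate piece is the simplification of $g(i,j)$ and the observation that the end-point and diagonal cases of $(i,j)$ collapse so neatly; once that is in hand, everything else is bookkeeping.
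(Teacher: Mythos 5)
Your proposal is correct and matches the paper's (implicit) argument: the paper states this proposition without proof as a direct specialization of Proposition \ref{prop:measures-association} to the two-atom comonotonic pmf $f_{\boldsymbol{I}^c}(\boldsymbol{0})=1-p$, $f_{\boldsymbol{I}^c}(\boldsymbol{1})=p$, and your substitutions and simplifications (including the evaluation $g(0,0)=g(1,1)=1/2$, $g(0,1)=(1-p)/(2(2-p))$, $g(1,0)=(3-p)/(2(2-p))$ and the identity $(1-p)^2+p^2-1=-2p(1-p)$) all check out. The only point worth adding explicitly is that \emph{maximality} of these values over $\mathcal{C}^{GFGM(p)}$ follows from $\boldsymbol{U}\preceq_{sm}\boldsymbol{U}^{EPD}$ together with the theorem stating that the concordance orders imply the corresponding inequalities for $\rho^{cL}$, $\rho^{cU}$, $\rho^{c}$ and $\tau$.
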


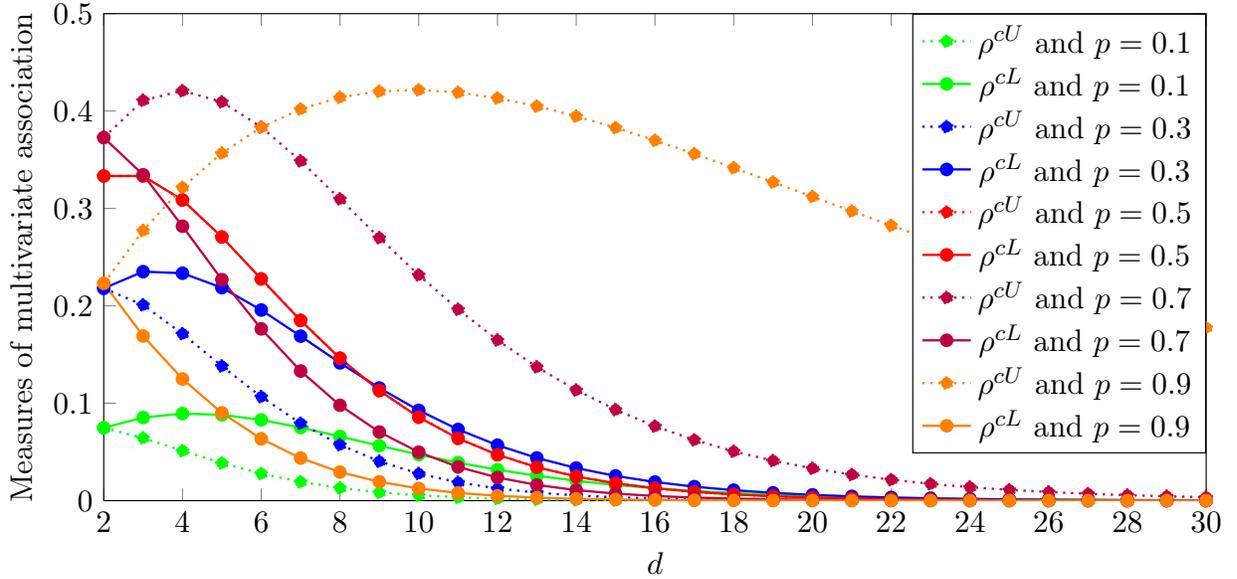
\begin{figure}[ht]
	\centering
\resizebox{\textwidth}{!}{
		\begin{tikzpicture}
	\begin{axis}[
		width = 6in, 
		height = 3in,
		ymin = 0,
		xmin = 2, 
		ymax = 0.5, 
		xmax = 30,
		xlabel={$d$},
		ylabel={Measures of multivariate association}, 
		legend style={at={(1,1)},anchor=north east},
		]
		\addplot[green, dotted, thick,mark=*] table [x="d", y="p1", col sep=comma] {code/rhoU.csv};
		\addlegendentry{$\rho^{cU}$ and $p=0.1$}	
		\addplot[green, thick,mark=*] table [x="d", y="p1", col sep=comma] {code/rhoL.csv};
		\addlegendentry{$\rho^{cL}$ and $p=0.1$}	
		
		\addplot[blue, dotted, thick,mark=*] table [x="d", y="p3", col sep=comma] {code/rhoU.csv};
		\addlegendentry{$\rho^{cU}$ and $p=0.3$}	
		\addplot[blue, thick,mark=*] table [x="d", y="p3", col sep=comma] {code/rhoL.csv};
		\addlegendentry{$\rho^{cL}$ and $p=0.3$}	
		
		\addplot[red, dotted, thick,mark=*] table [x="d", y="p5", col sep=comma] {code/rhoU.csv};
		\addlegendentry{$\rho^{cU}$ and $p=0.5$}	
		\addplot[red, thick,mark=*] table [x="d", y="p5", col sep=comma] {code/rhoL.csv};
		\addlegendentry{$\rho^{cL}$ and $p=0.5$}	
		
		\addplot[purple, dotted, thick,mark=*] table [x="d", y="p7", col sep=comma] {code/rhoU.csv};
		\addlegendentry{$\rho^{cU}$ and $p=0.7$}	
		\addplot[purple, thick,mark=*] table [x="d", y="p7", col sep=comma] {code/rhoL.csv};
		\addlegendentry{$\rho^{cL}$ and $p=0.7$}	
		
		\addplot[orange, dotted, thick,mark=*] table [x="d", y="p9", col sep=comma] {code/rhoU.csv};
		\addlegendentry{$\rho^{cU}$ and $p=0.9$}	
		\addplot[orange, thick,mark=*] table [x="d", y="p9", col sep=comma] {code/rhoL.csv};
		\addlegendentry{$\rho^{cL}$ and $p=0.9$}			
	\end{axis}
\end{tikzpicture}}
	\caption{Maximal values of $\rho^{cL}$ and $\rho^{cU}$ for a $d$-variate copula $C \in \mathcal{C}^{GFGM(p)}$ for different values of $p$ and $d$. }\label{fig:rhocl-rhocu}
\end{figure}
We now explore the effect of the shape parameter $p$ and of the dimension $d$ on the measures of association provided in Proposition \ref{prop:association-measure-max}. Figure \ref{fig:rhocl-rhocu} presents the values of $\rho^{cL}$ and $\rho^{cU}$ for different values of $p$ and of $d$. Tables \ref{tab:rhocl}, \ref{tab:rhocU}, \ref{tab:rhoc} and \ref{tab:tau} in Appendix \ref{sec:app-association} present respectively the values of $\rho^{cL}$, $\rho^{cU}$, $\rho^{c}$ and $\tau$. As noted in \cite{nelsen1996NonparametricMeasuresMultivariate}, we have $\rho^{cL} = \rho^{cU} = \rho^{c}$ when $d = 2$. Further, for a given value of $p$, $\rho^{c}$ are equal for $d = 2$ and for $d = 3$; this is also the case for $\tau$. 
\begin{remark}\label{rem:rho}
	We consider three cases:
	\begin{enumerate}
		\item For $p < 0.5$, we have $\rho^{cL} > \rho^{cU}$. Further, $\rho^{cU}$ is a strictly decreasing function of $d$, therefore its maximal value occurs for $d = 2$. We also have that $\rho^{cL}$ first increases and then decreases.
		\item For $p = 0.5$, we have $\rho^{cL} = \rho^{cU} = \rho^{c}$. Further, their values are equal for $d = 2$ and $d = 3$, then is a decreasing function of $d$. 
		\item For $p > 0.5$, we have $\rho^{cL} < \rho^{cU}$. Further, $\rho^{cL}$ is a strictly decreasing function of $d$, therefore its maximal value occurs for $d = 2$. We also have that $\rho^{cU}$ first increases and then decreases.
		\item For $d = 2$, $\rho^{cL}$ and $\rho^{cU}$ are not a monotonic function of $p$. Recall that $\boldsymbol{p}$ is a shape parameter and one should not compare the dependence structure between two copulas with different $\boldsymbol{p}$. 
	\end{enumerate}
\end{remark}

\section{Bivariate case}\label{sec:bivariate}

We study the properties of the bivariate copula from the representation in Subsection \ref{ss:representation}. Before getting started, let us recall some notions on bivariate Bernoulli distributions.

The tetrahedron in Figure \ref{fig:tetrahedron-equal} represents the convex hull of possible pmfs for bivariate Bernoulli distributions. Since a vector of pmf for a bivariate Bernoulli distribution contains four values (including one constraint that the sum of the probabilities sum to 1), one may represent the pmf with a vector of three values $(f_{00}, f_{01}, f_{10})$. Each vertex in Figure \ref{fig:tetrahedron-equal} corresponds to points where the pmf is 1 for one element and 0 for all others. One may then represent the pmf of any bivariate Bernoulli distribution as a convex combination of the four vertices. The green surface corresponds to the pmfs where $p_1 = p_2 = p$, with $p\geq 1/2$, while the purple surface corresponds to pmfs where $p_1 = p_2 = p$ with $p\leq 1/2$. The edge between $(0, 0, 0)$ and $(1, 0, 0)$ corresponds to pmfs of comonotonic bivariate Bernoulli distributions. 
\begin{figure}
	\centering	
	\resizebox{0.5\textwidth}{!}{
	\begin{tikzpicture}
		
		\newdimen\R
		\R=9pt
		
		\draw[thick,-stealth] (0,0,0)--(0,0,\R+1); 
		\draw[thick,-stealth] (0,0,0)--(0,\R+1,0);
		\draw[thick,-stealth] (0,0,0)--(\R+1,0,0);
		
		\coordinate[label=left:{$(0,0,0)$}] (O) at (0,0,0);
		\coordinate[label=left:{$(0,0,1)$}] (z) at (0,\R,0);
		\coordinate[label=left:{$(0,1,0)$}] (y) at (0,0,\R);
		\coordinate[label=below right:{$(1,0,0)$}] (x) at (\R,0,0);
		
		\node at (O) {\textbullet};
		\node at (z) {\textbullet};
		\node at (x) {\textbullet};
		\node at (y) {\textbullet};
		
		\coordinate[label={below:{$\left(\frac{1}{2},0,0\right)$}}] (como2) at (0.5*\R,0,0);
		\coordinate[label={left:{$\left(0,\frac{1}{2},\frac{1}{2}\right)$}}] (anti2) at (0,0.5*\R,0.5*\R);

		\draw[fill=gray,opacity=0.3] (x)--(y)--(z);
		
		\draw[thick] (y)--(x)--(z);
		\draw[thick, red] (y)--(z);
		\draw[thick, blue] (0,0,0)--(\R,0,0);
		
		
		\draw[very thick, dash dot] (como2) to (anti2);
		\node[blue] at (como2) {\LARGE\textbullet};
		\node[red] at (anti2) {\LARGE\textbullet};
		
		\foreach \n in {2.8, 4, 8} {
			\coordinate (comon) at (\R/\n,0,0);
			\coordinate (antin) at (0, \R/\n, \R/\n);
			\node[blue] at (comon) {\LARGE\textbullet};
			\node[red] at (antin) {\LARGE\textbullet};
			\draw[very thick, dashed] (comon) to (antin);				
		};
		
		\foreach \n in {1.1, 1.3, 1.6} {
			\coordinate (comon) at (\R/\n,0,0);
			\coordinate (antin) at (2*\R/\n-\R, \R-\R/\n, \R-\R/\n);
			\node[blue] at (comon) {\textbullet};
			\node[red] at (antin) {\textbullet};
			\draw[very thick, dotted] (comon) to (antin);				
		};
		
		\draw[fill=green,opacity=0.3] (0, 0, 0)--(como2)--(anti2);
		\draw[fill=purple,opacity=0.3] (como2)--(anti2)--(\R, 0, 0);
		
\end{tikzpicture}}
	\caption{Tetrahedron representing admissible bivariate Bernoulli pmfs. The green (purple) triangle corresponds to  admissible bivariate Bernoulli pmfs for equal marginals with $p > 1/2$ ($p < 1/2$).}
	\label{fig:tetrahedron-equal}
\end{figure}
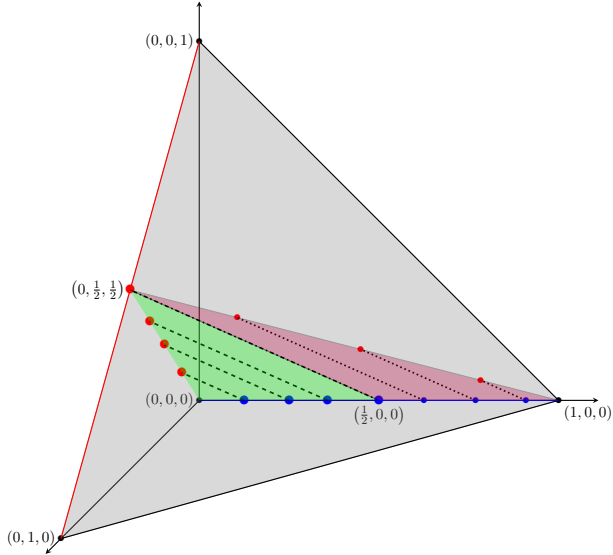

We note that in Figure \ref{fig:tetrahedron-equal}, the case $p_1 = p_2 = 0.5$ (the bivariate FGM copula) forms a line, the case $p_1 = p_2$ forms a triangle, while the case $p_1 \neq p_2$ forms a tetrahedron. It follows that each generalization enables much more flexibility in the model. 

When working in two dimensions, it is more convenient to work with an algebraic construction of a copula instead of working with pmfs. Therefore, we start by constructing an algebraic form of the pmf for bivariate Bernoulli distributions. Let $(I_1, I_2)$ be a Bernoulli random vector with success probabilities $p_1$ and $p_2$. Since the marginals are fixed, the pmf has only one free parameter. Further, as shown in \cite{fontana2018RepresentationMultivariateBernoulli}, the pmf of multivariate Bernoulli distributions with fixed margins can be expressed as a convex combination of extremal points. In the bivariate case, there are two extremal points, and these points are known analytically and correspond to the Fréchet Hoeffding upper and lower bounds. It is therefore sufficient to consider a linear function of the free parameter to construct a linear form of the pmf. We describe the bivariate pmf of $(I_1, I_2)$ with the following elements:
\begin{equation}\label{eq:pmf-bivariate-symmetric}
	\left|
	\begin{array}{cc}
		f_{00} & f_{01} \\
		f_{10} & f_{11}
	\end{array}
	\right| = 
	\left|
	\begin{array}{cc}
		(1-p_1)(1-p_2) + p_1p_2 \theta & (1-p_1)p_2 - p_1 p_2 \theta \\
		p_1(1-p_2) - p_1p_2 \theta     & p_1p_2 + p_1p_2\theta
	\end{array}
	\right|,
\end{equation}
for
\begin{equation}\label{eq:range-theta-biv}
	-\min\left(1, \frac{(1-p_1)(1-p_2)}{p_1p_2}\right) \leq \theta \leq \min\left(\frac{1-p_1}{p_1}, \frac{1-p_2}{p_2}\right).
\end{equation}
One may verify that $(I_1, I_2)$ forms a pair of counter-monotonic, independent and comonotonic rvs if $\theta$ respectively takes the values $-\min\left(1, \frac{(1-p_1)(1-p_2)}{p_1p_2}\right)$, $0$ and $\min\left(\frac{1-p_1}{p_1}, \frac{1-p_2}{p_2}\right)$. One can therefore interpret $\theta$ as a genuine dependence parameter. Further, notice that one may write
$$\theta = E\left[\frac{(I_1 - p_1)(I_2 - p_2)}{p_1p_2}\right].$$

\begin{proposition}
	The bivariate copula from \eqref{eq:copula-stochastic} has an equivalent expression as 
	\begin{equation}\label{eq:copula-biv-symmetric}
		C(u, v) = uv\left(1 + \theta \left(1 - u^{\frac{p_1}{1-p_1}}\right)\left(1 - v^{\frac{p_2}{1-p_2}}\right)\right)
	\end{equation}
	for $(u, v)\in [0, 1]^2$ and for $\theta$ satisfying \eqref{eq:range-theta-biv}.
\end{proposition}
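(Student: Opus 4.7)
The plan is to obtain the algebraic form directly from the natural representation of $C \in \mathcal{C}^{GFGM}$ given by \eqref{eq:copula-natural}, and then identify the single dependence parameter $\nu_{12}$ with $\theta$ using the parametrization of the bivariate Bernoulli pmf in \eqref{eq:pmf-bivariate-symmetric}.

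First I would specialize the corollary that gives \eqref{eq:copula-natural} to $d = 2$. The double sum collapses to a single term ($k=1$ contributes nothing because $\nu_{j} = E[(I_j - p_j)/p_j] = 0$, and $k=2$ contributes the lone index pair $(1,2)$), yielding
\begin{equation*}
	C(u,v) = uv\left(1 + \nu_{12}\left(1 - u^{\frac{p_1}{1-p_1}}\right)\left(1 - v^{\frac{p_2}{1-p_2}}\right)\right), \quad (u,v) \in [0,1]^2,
\end{equation*}
where $\nu_{12} = E\!\left[\dfrac{(I_1 - p_1)(I_2 - p_2)}{p_1 p_2}\right]$. This is already of the claimed shape; it remains only to show that under the pmf parametrization \eqref{eq:pmf-bivariate-symmetric} one has $\nu_{12} = \theta$.

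Next I would compute $\nu_{12}$ directly from the joint pmf in \eqref{eq:pmf-bivariate-symmetric}. Expanding,
\begin{equation*}
	p_1 p_2 \, \nu_{12} = E[I_1 I_2] - p_1 p_2 = f_{11} - p_1 p_2,
\end{equation*}
and reading off $f_{11} = p_1 p_2 + p_1 p_2 \theta$ from \eqref{eq:pmf-bivariate-symmetric} gives $p_1 p_2 \nu_{12} = p_1 p_2 \theta$, i.e. $\nu_{12} = \theta$. Substituting into the display above yields exactly \eqref{eq:copula-biv-symmetric}.

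Finally, I would justify the admissible range \eqref{eq:range-theta-biv} by noting that the four entries of \eqref{eq:pmf-bivariate-symmetric} must all lie in $[0,1]$ to define a valid bivariate Bernoulli pmf; imposing $f_{00}, f_{01}, f_{10}, f_{11} \geq 0$ gives the four inequalities that collapse to the stated two-sided bound on $\theta$, and the boundary values correspond to the Fréchet--Hoeffding counter-monotonic and comonotonic bivariate Bernoulli pmfs (as already remarked in the text preceding the proposition). The only mild subtlety is the minimum in the lower bound: whether $-1$ or $-(1-p_1)(1-p_2)/(p_1 p_2)$ binds depends on whether $f_{11}\geq 0$ or $f_{00}\geq 0$ is the tighter constraint, and similarly for the upper bound. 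No obstacle is substantive here; the whole argument is essentially a bookkeeping exercise once the corollary giving \eqref{eq:copula-natural} is in hand.
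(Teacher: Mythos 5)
Your argument is correct, but it takes a different (and cleaner) route than the paper's own proof. The paper proves the proposition by inserting the pmf table \eqref{eq:pmf-bivariate-symmetric} directly into the stochastic form \eqref{eq:copula-stochastic} and ``simplifying (a lot)'' --- a brute-force expansion of the expectation over the four atoms of $(I_1,I_2)$. You instead route through the natural representation \eqref{eq:copula-natural}: for $d=2$ the $k=1$ coefficients vanish because $\nu_j = E[(I_j-p_j)/p_j]=0$, so only $\nu_{12}$ survives, and the identification $\nu_{12} = (f_{11}-p_1p_2)/(p_1p_2) = \theta$ is immediate from \eqref{eq:pmf-bivariate-symmetric} (indeed the paper itself records $\theta = E[(I_1-p_1)(I_2-p_2)/(p_1p_2)]$ just before the proposition). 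What your approach buys is that all the algebra is absorbed into the already-stated corollary giving \eqref{eq:copula-natural}, reducing the proposition to a two-line bookkeeping step; what it costs is a dependence on that corollary, which the paper states without detailed proof, so in some sense the ``simplifying (a lot)'' has merely been relocated. One minor point: the paper defines $\nu_{j_1\dots j_k}$ only for $k\in\{2,\dots,d\}$ even though the sum in \eqref{eq:copula-natural} starts at $k=1$; your reading that the $k=1$ coefficients are the centered first moments (hence zero) is the only one consistent with uniform margins, so this is not a gap, but it is worth stating explicitly. Your closing discussion of the range \eqref{eq:range-theta-biv} via nonnegativity of the four pmf entries is consistent with how the paper introduces that constraint and is not strictly part of the proposition's proof.
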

\begin{proof}
	Inserting \eqref{eq:pmf-bivariate-symmetric} into \eqref{eq:copula-stochastic} and simplifying (a lot) yields the desired result.	
\end{proof}
When $p_1 = p_2 = p$, the copula in \eqref{eq:copula-biv-symmetric} corresponds to a known extension of the FGM copula, proposed in Section 2 of \cite{huang1999modifications}, whose expression is 
\begin{equation}\label{eq:huang-kotz}
	C(u, v) = uv \left(1 + a \left(1-u^b\right)\left(1-v^b\right)\right),
\end{equation}
for $(u, v)\in [0, 1]^d$, $b > 0$ and $-(\max(1, b))^{-2} \leq a \leq b^{-1}$. It turns out that another contribution of this paper is to identify the stochastic representation of the Huang-Kotz FGM copula in \eqref{eq:huang-kotz}. Further, the Huang-Kotz FGM copula is symmetric, meaning that $C(u, v) = C(v, u)$ for all $(u, v) \in [0, 1]^2$. When $p_1 \neq p_2$, the copula is no longer symmetric and we obtain a copula that \cite{bairamov2002dependence} calls an asymmetric modified Huang-Kotz FGM distribution. Much of \cite{huang1999modifications} is dedicated to determining the feasible range for the parameter $a$, that is, verifying that the $2$-increasing property in \eqref{eq:2-increasing} is satisfied. It is much easier to verify that the underlying pmf for the bivariate Bernoulli random vector is admissible since this involves verifying that the pmf lies between two extremal points. Also, it becomes natural to extend the Huang-Kotz copula to higher dimensions by constructing a $d$-variate random vector $\boldsymbol{I}$ with equal probabilities, that is, with $p_1 = \dots = p_d$. 

We present the heatmaps of the copula density function in Figure \ref{fig:max-dep} for the case with maximal dependence, that is, when $(I_1, I_2) = (I_1^c, I_2^c)$. One notices that for $(p_1, p_2) \in (0.5, 1)^2$, there is more mass in the upper tail, while for $(p_1, p_2) \in (0, 0.5)^2$, there is more mass in the lower tail. This is in line with the observations from the numerical examples of the maximal association measures in Section \ref{ss:association}. 
\begin{figure}[ht]
	\centering
	\begin{subfigure}[b]{0.3\textwidth}
		\centering
		\includegraphics[width=\textwidth]{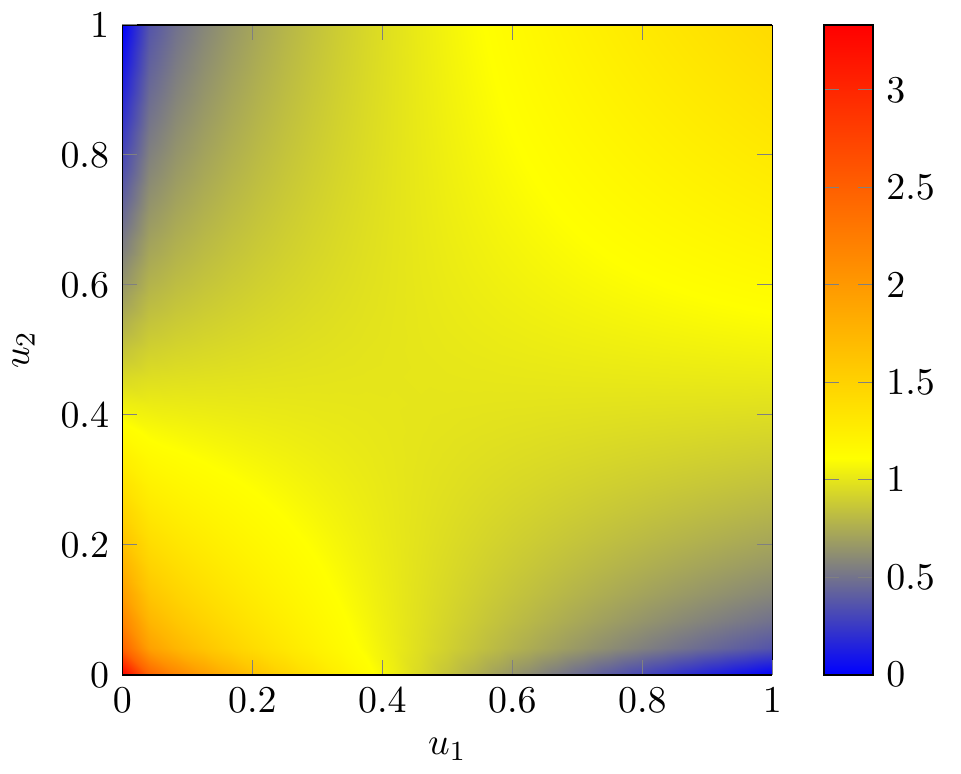}
		\caption{$p_1 = 0.3$, $p_2 = 0.3$}
		\label{fig:33p}
	\end{subfigure}
	\hfill
	\begin{subfigure}[b]{0.3\textwidth}
		\centering
		\includegraphics[width=\textwidth]{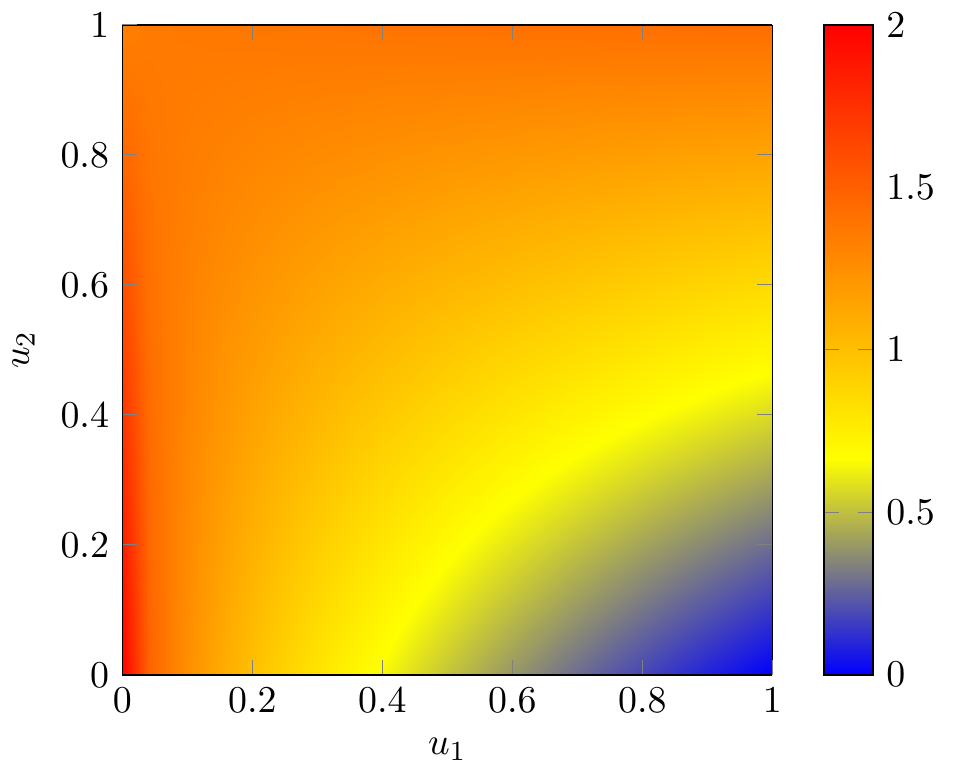}
		\caption{$p_1 = 0.3$, $p_2 = 0.5$}
		\label{fig:35p}
	\end{subfigure}
	\hfill
	\begin{subfigure}[b]{0.3\textwidth}
		\centering
		\includegraphics[width=\textwidth]{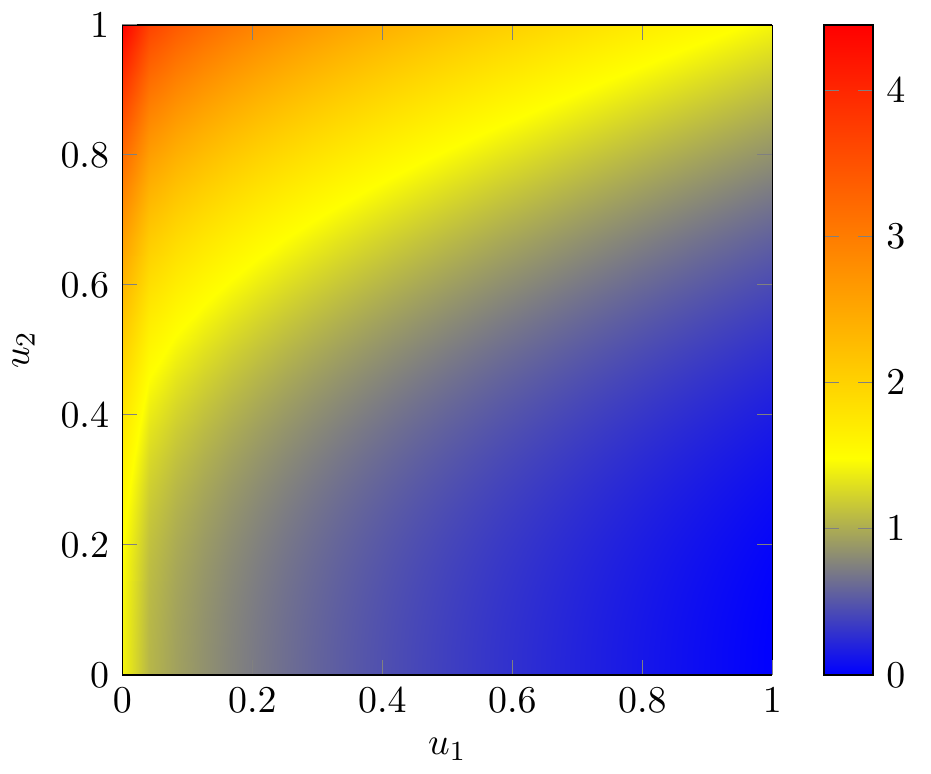}
		\caption{$p_1 = 0.3$, $p_2 = 0.7$}
		\label{fig:37p}
	\end{subfigure} \\
	
	\begin{subfigure}[b]{0.3\textwidth}
		\centering
		\includegraphics[width=\textwidth]{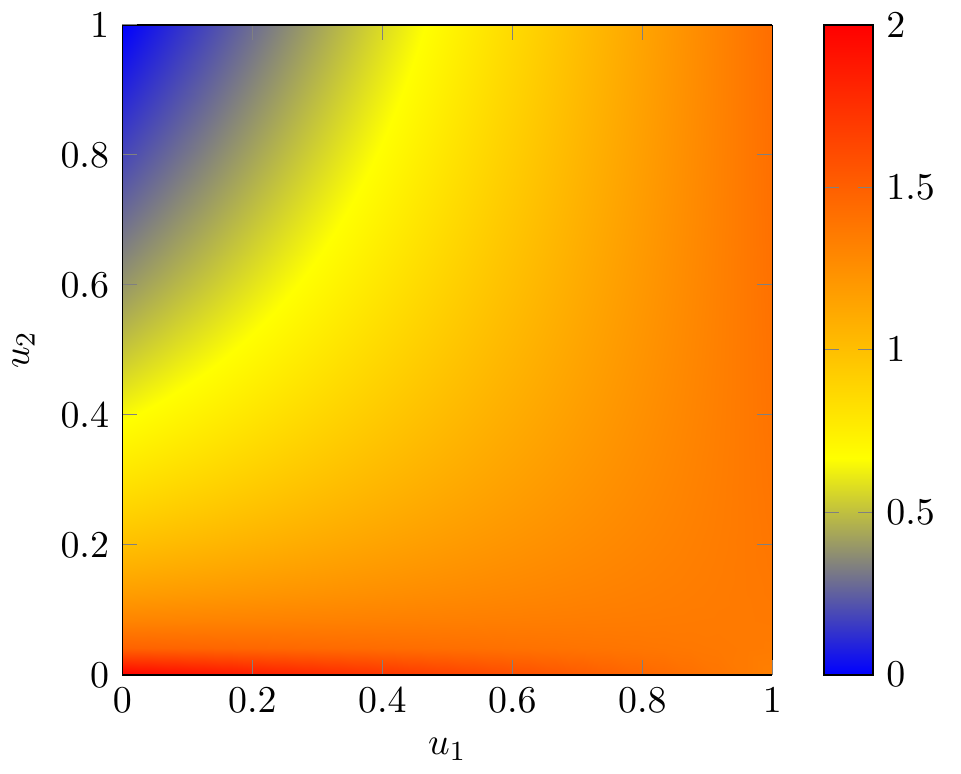}
		\caption{$p_1 = 0.5$, $p_2 = 0.3$}
		\label{fig:53p}
	\end{subfigure}
	\hfill
	\begin{subfigure}[b]{0.3\textwidth}
		\centering
		\includegraphics[width=\textwidth]{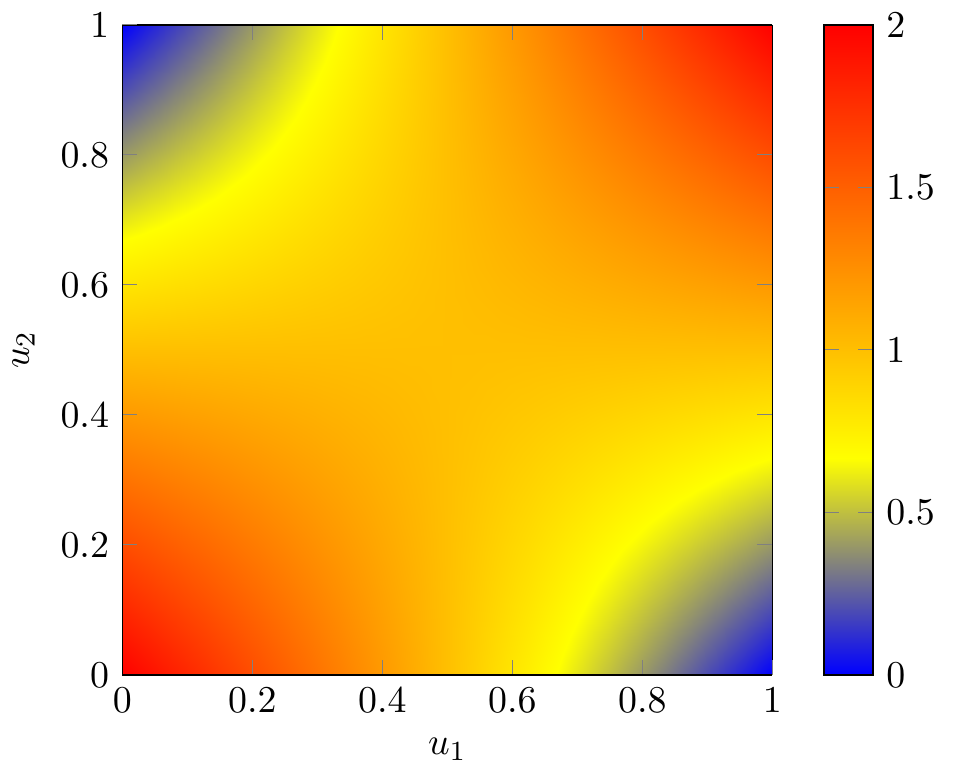}
		\caption{$p_1 = 0.5$, $p_2 = 0.5$}
		\label{fig:55p}
	\end{subfigure}
	\hfill
	\begin{subfigure}[b]{0.3\textwidth}
		\centering
		\includegraphics[width=\textwidth]{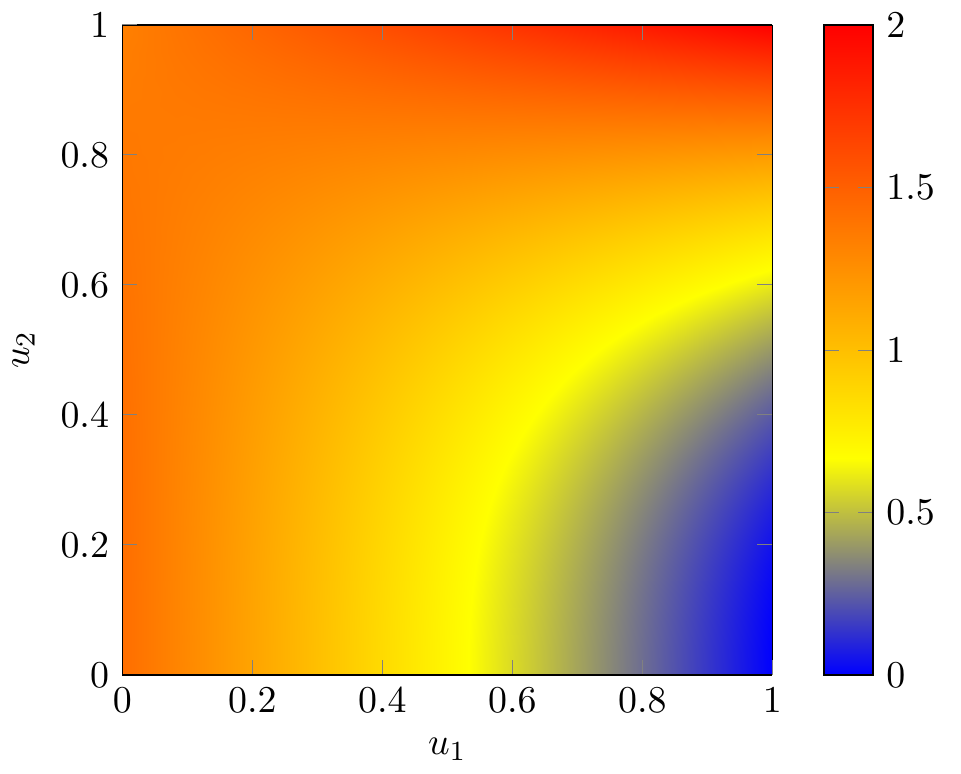}
		\caption{$p_1 = 0.5$, $p_2 = 0.7$}
		\label{fig:57p}
	\end{subfigure} \\
	
	\begin{subfigure}[b]{0.3\textwidth}
		\centering
		\includegraphics[width=\textwidth]{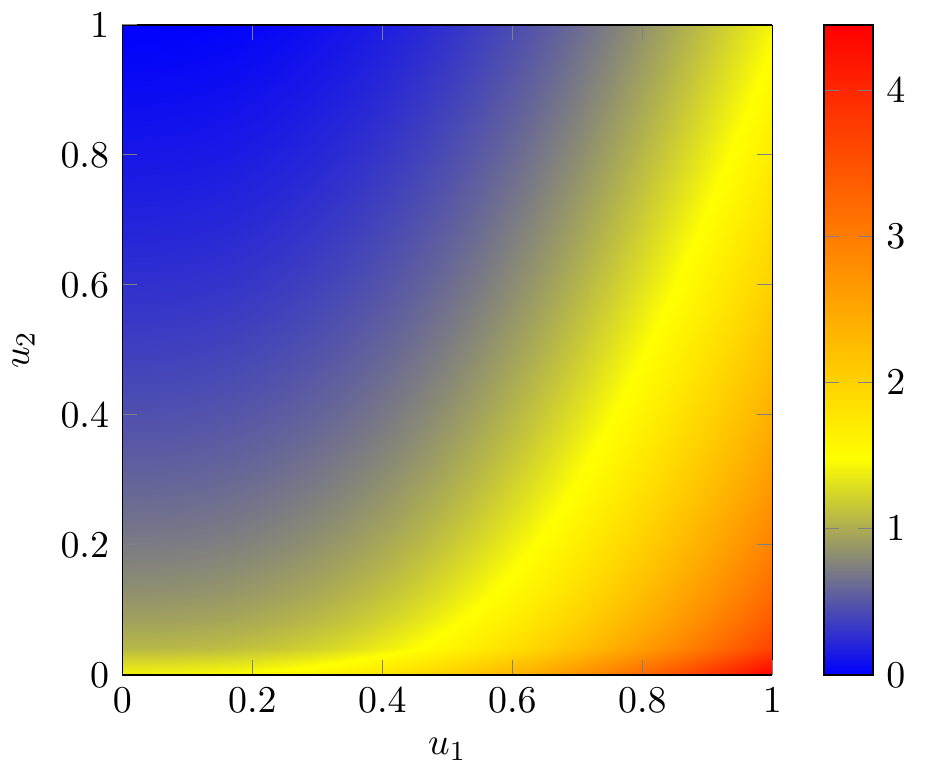}
		\caption{$p_1 = 0.7$, $p_2 = 0.3$}
		\label{fig:73p}
	\end{subfigure}
	\hfill
	\begin{subfigure}[b]{0.3\textwidth}
		\centering
		\includegraphics[width=\textwidth]{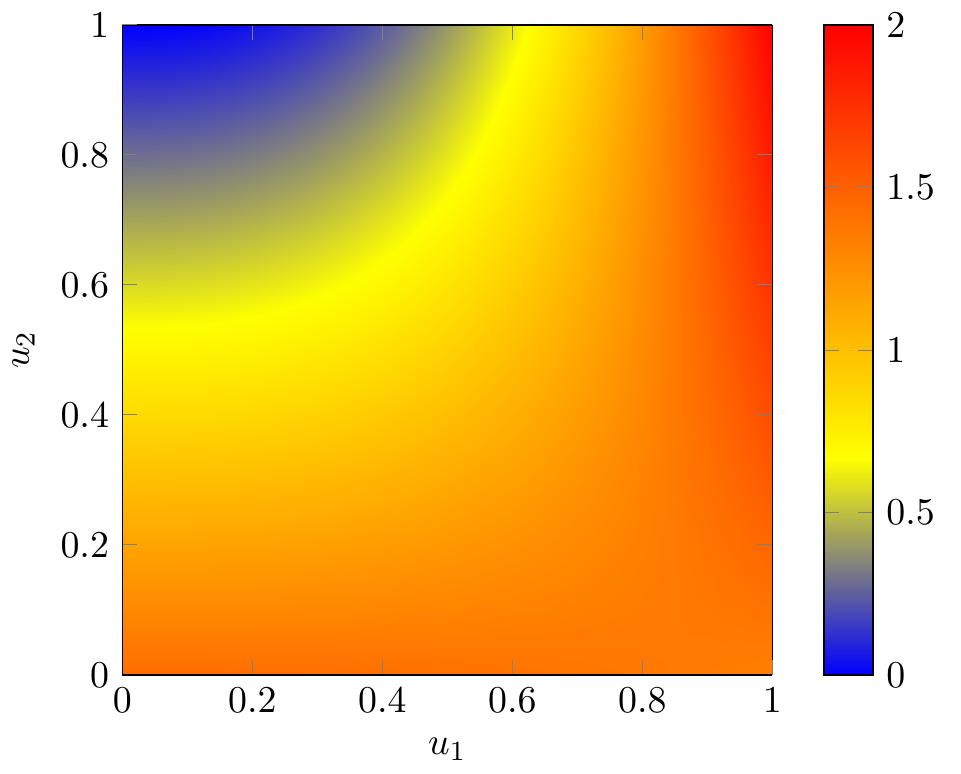}
		\caption{$p_1 = 0.7$, $p_2 = 0.5$}
		\label{fig:75p}
	\end{subfigure}
	\hfill
	\begin{subfigure}[b]{0.3\textwidth}
		\centering
		\includegraphics[width=\textwidth]{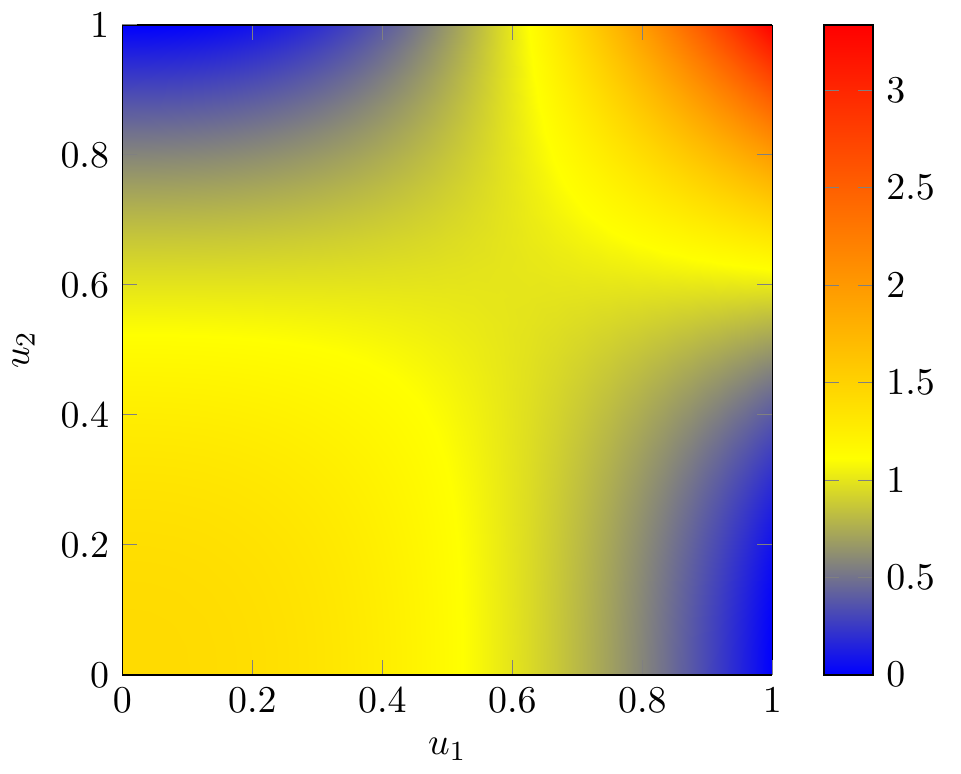}
		\caption{$p_1 = 0.7$, $p_2 = 0.7$}
		\label{fig:77p}
	\end{subfigure} \\
	\caption{Heatmaps for density functions associated to the maximal dependence structure.}
	\label{fig:max-dep}
\end{figure}
Figure \ref{fig:min-dep} presents the density function when $(I_1, I_2)$ forms a pair of counter-monotonic rvs (with cdf constructed by the Fréchet lower bound). One observes that for negative dependence, the mass is mostly uniform on the unit cube; this is a consequence of moderate dependence from the FGM copula. However, when $p \leq 0.5$, there is less mass near the coordinate $(1, 1)$, while for $p \geq 0.5$, there is less mass near the coordinate $(0, 0)$. 
\begin{figure}[ht]
	\centering
	\begin{subfigure}[b]{0.3\textwidth}
		\centering
		\includegraphics[width=\textwidth]{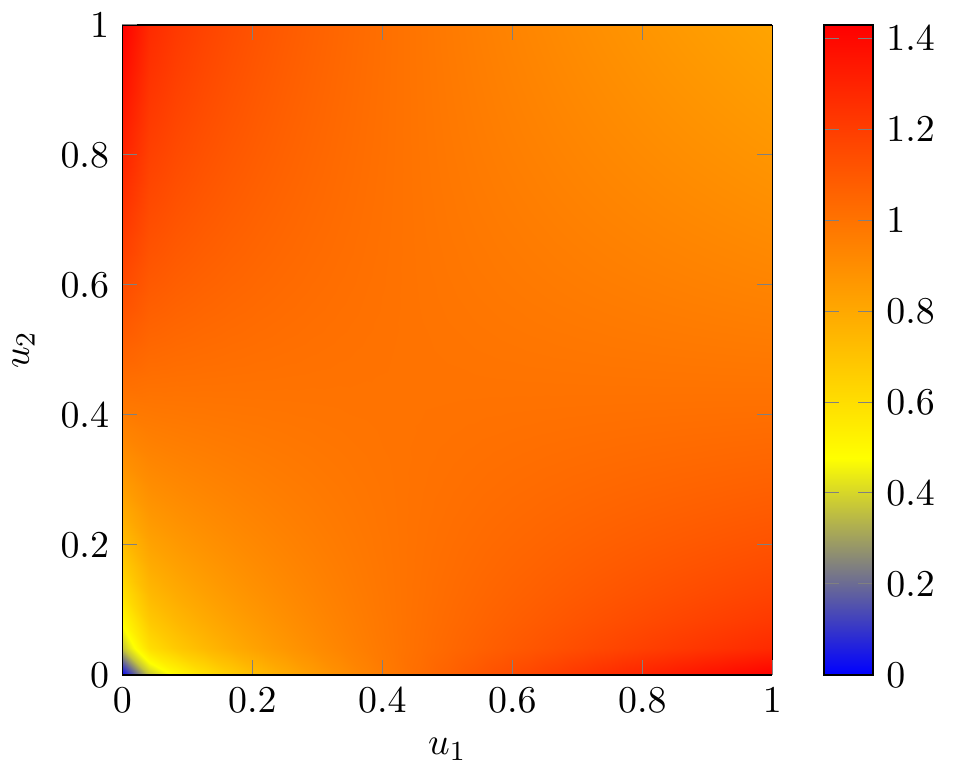}
		\caption{$p_1 = 0.3$, $p_2 = 0.3$}
		\label{fig:33m}
	\end{subfigure}
	\hfill
	\begin{subfigure}[b]{0.3\textwidth}
		\centering
		\includegraphics[width=\textwidth]{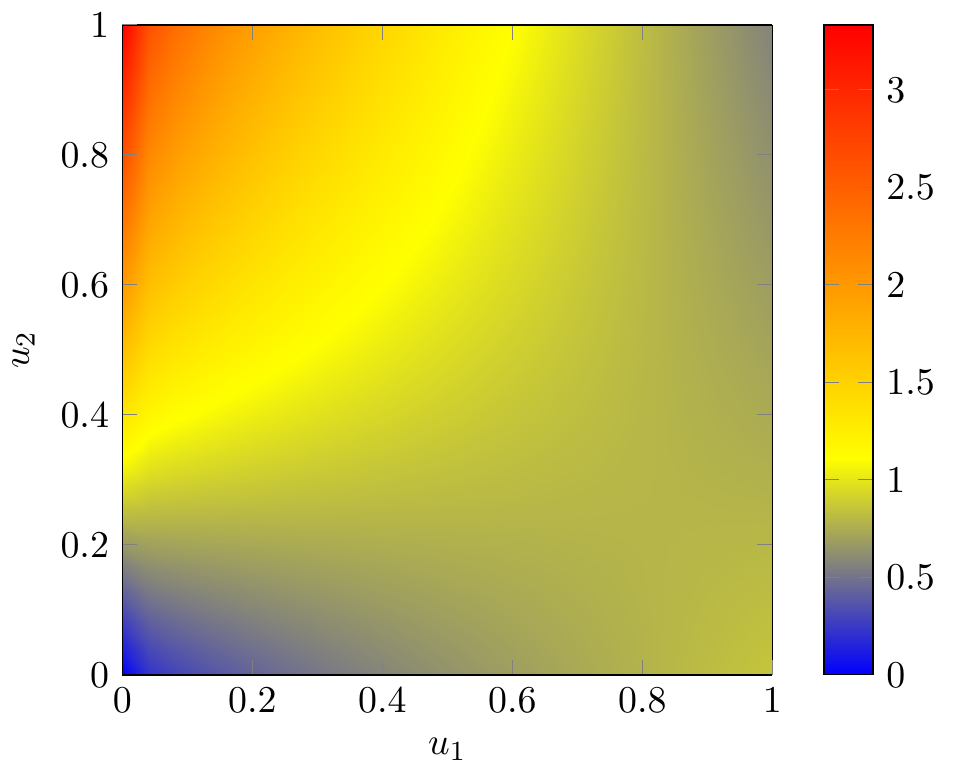}
		\caption{$p_1 = 0.3$, $p_2 = 0.5$}
		\label{fig:35m}
	\end{subfigure}
	\hfill
	\begin{subfigure}[b]{0.3\textwidth}
		\centering
		\includegraphics[width=\textwidth]{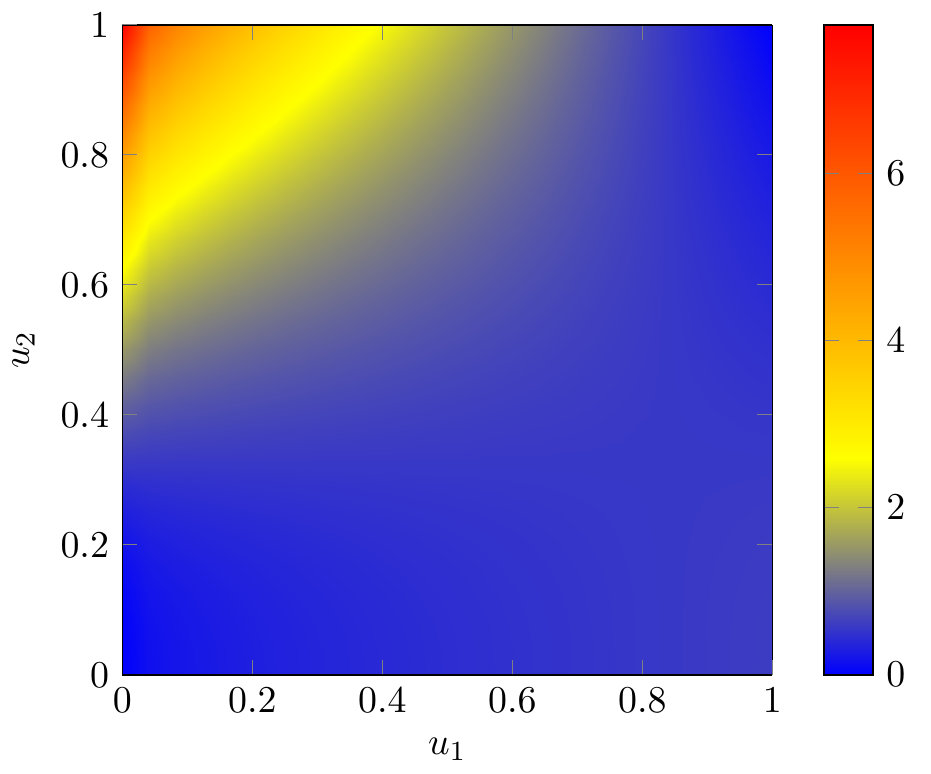}
		\caption{$p_1 = 0.3$, $p_2 = 0.7$}
		\label{fig:37m}
	\end{subfigure} \\
	
	\begin{subfigure}[b]{0.3\textwidth}
		\centering
		\includegraphics[width=\textwidth]{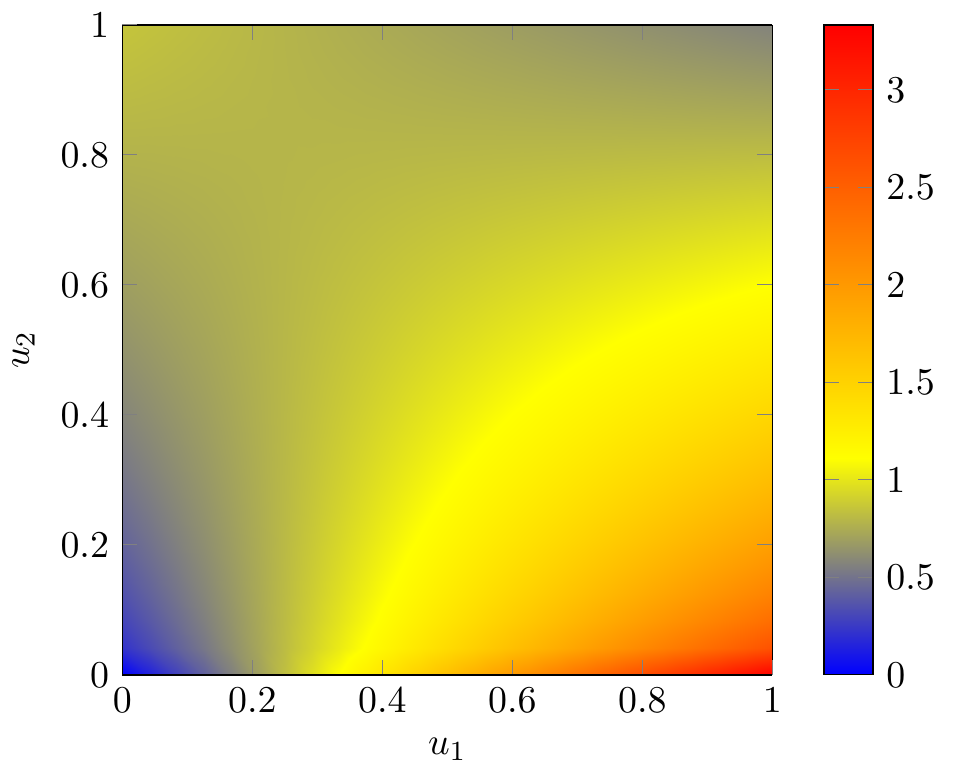}
		\caption{$p_1 = 0.5$, $p_2 = 0.3$}
		\label{fig:53m}
	\end{subfigure}
	\hfill
	\begin{subfigure}[b]{0.3\textwidth}
		\centering
		\includegraphics[width=\textwidth]{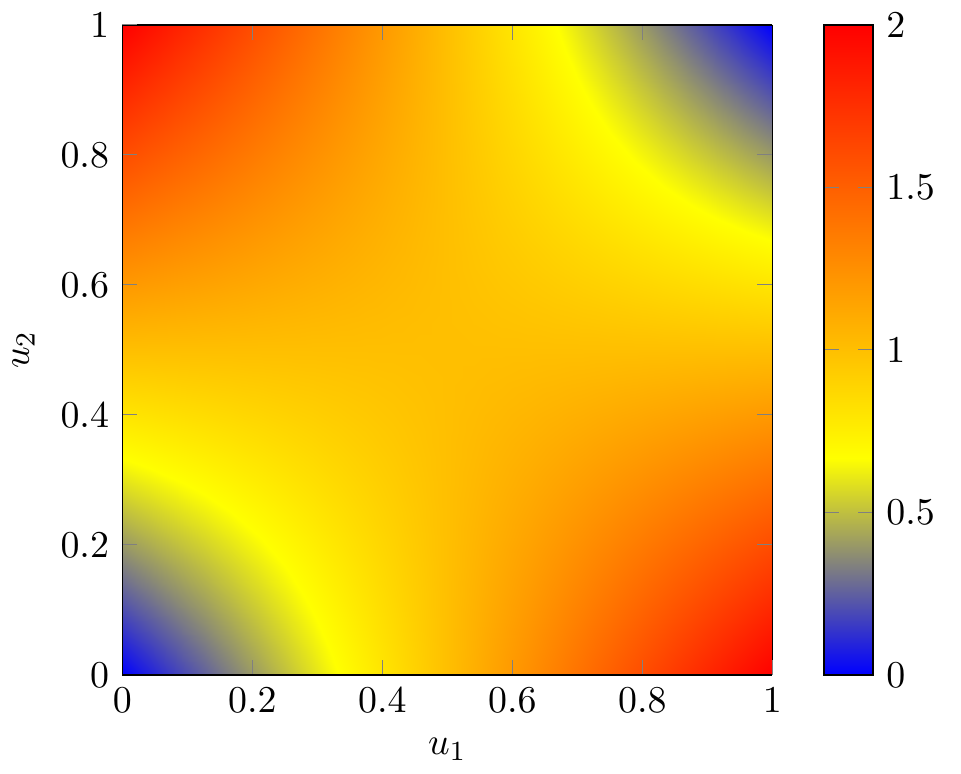}
		\caption{$p_1 = 0.5$, $p_2 = 0.5$}
		\label{fig:55m}
	\end{subfigure}
	\hfill
	\begin{subfigure}[b]{0.3\textwidth}
		\centering
		\includegraphics[width=\textwidth]{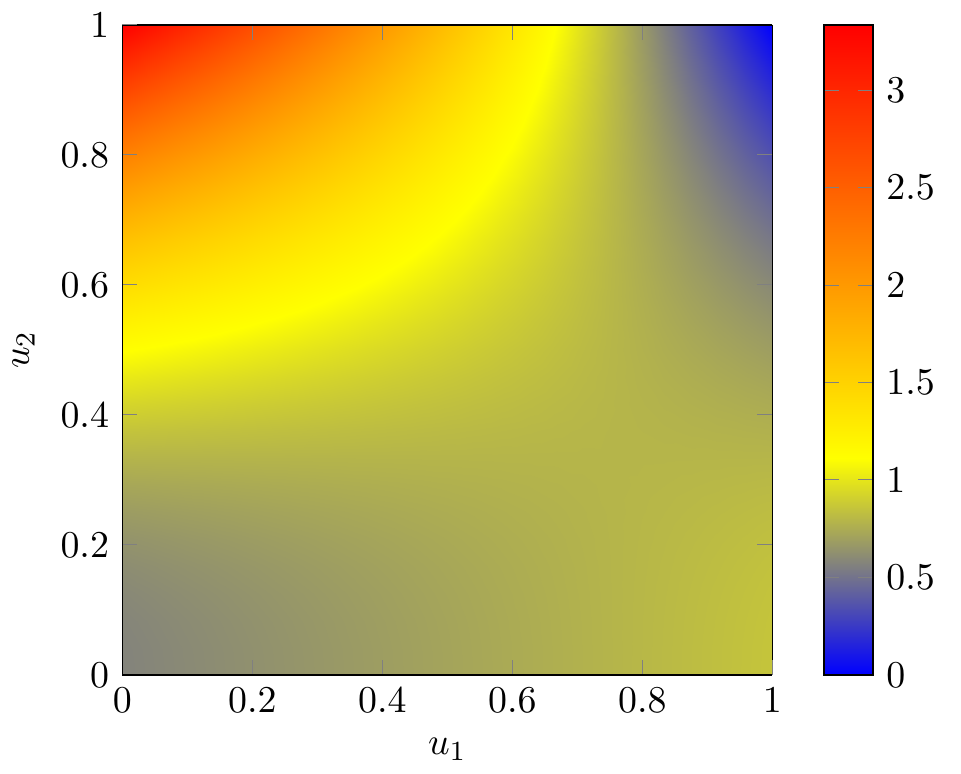}
		\caption{$p_1 = 0.5$, $p_2 = 0.7$}
		\label{fig:57m}
	\end{subfigure} \\
	
	\begin{subfigure}[b]{0.3\textwidth}
		\centering
		\includegraphics[width=\textwidth]{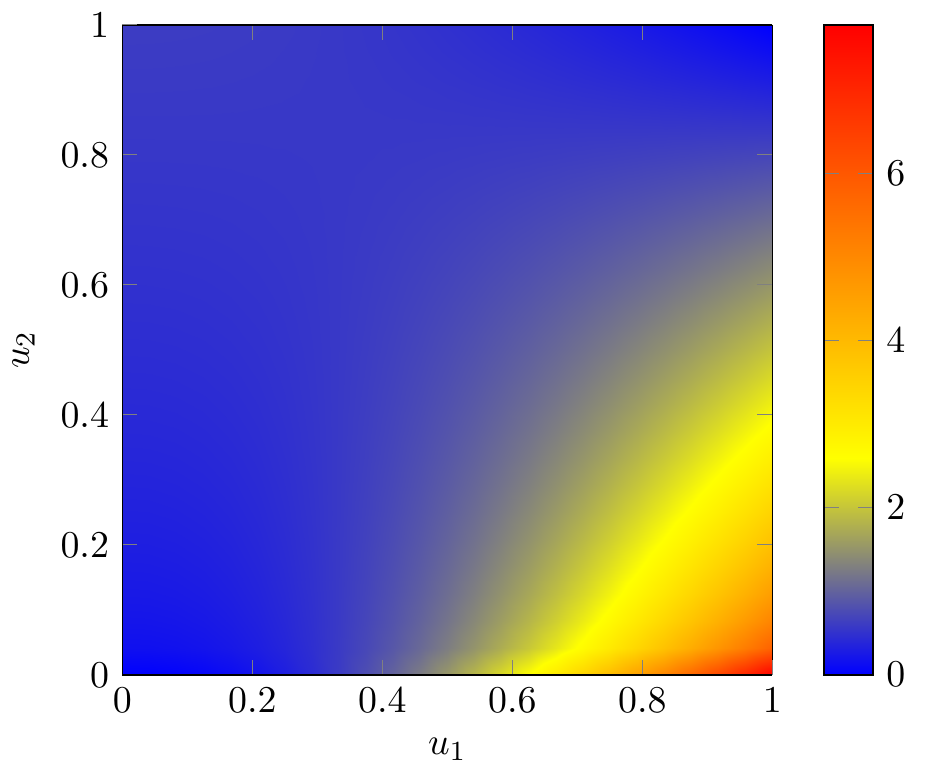}
		\caption{$p_1 = 0.7$, $p_2 = 0.3$}
		\label{fig:73m}
	\end{subfigure}
	\hfill
	\begin{subfigure}[b]{0.3\textwidth}
		\centering
		\includegraphics[width=\textwidth]{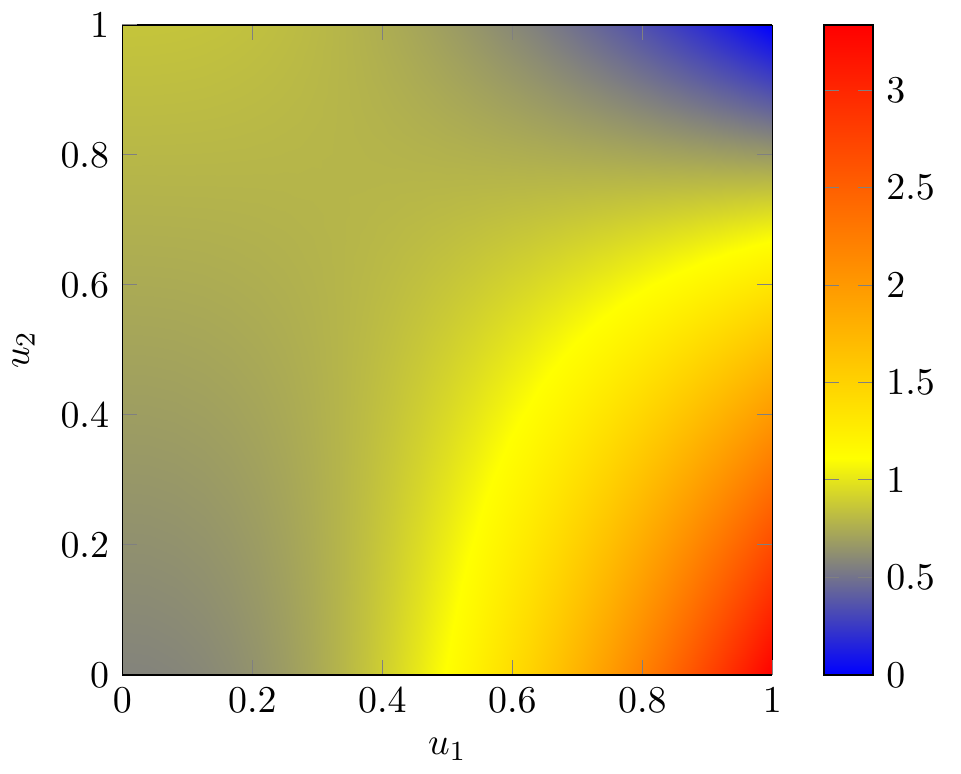}
		\caption{$p_1 = 0.7$, $p_2 = 0.5$}
		\label{fig:75m}
	\end{subfigure}
	\hfill
	\begin{subfigure}[b]{0.3\textwidth}
		\centering
		\includegraphics[width=\textwidth]{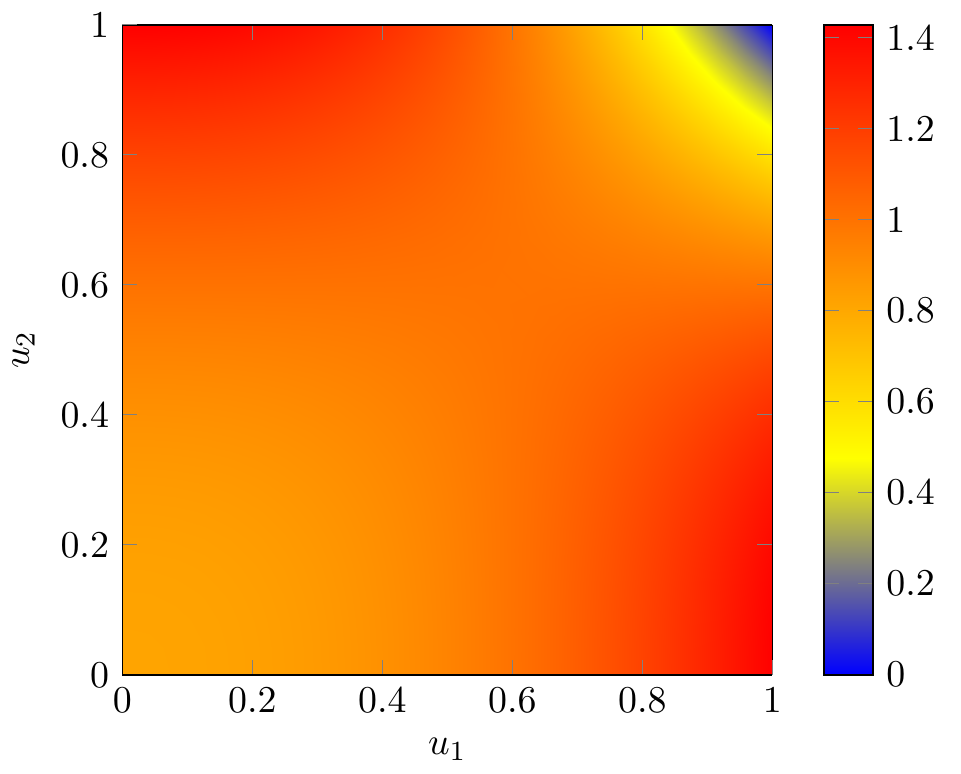}
		\caption{$p_1 = 0.7$, $p_2 = 0.7$}
		\label{fig:77m}
	\end{subfigure} \\
	\caption{Heatmaps for density functions associated to the minimal dependence structure.}
	\label{fig:min-dep}
\end{figure}

\section{Family of exchangeable GFGM copulas}\label{sec:subfamilies}

Up to this point, we have investigated a new family of copulas based on a stochastic representation of random vectors with uniform margins. We have discussed the effect of the parameter $\boldsymbol{p}$ on the shape of dependence. However, we have not investigated how to construct pmfs for $\boldsymbol{I}$ with given margins, unless in the bivariate case. 

Since the class of $d$-variate Bernoulli distributions with given margins is large, it will be more practical for model construction and for applications to consider subfamilies of multivariate Bernoulli distributions in order to construct copulas. Many subfamilies of multivariate Bernoulli distributions are useful within high-dimensional modelling (see, e.g., Sections 7.1 and 8.1.2 of \cite{joe1997MultivariateModelsMultivariate} for some examples, or \cite{jiang2021set} for a set of dependence structures which lead to efficient high-dimensional simulation procedures). In this section, we explore some properties of the subclass of $\mathcal{C}^{GFGM}$ that corresponds to exchangeable dependence structures; we will denote this subclass as $\mathcal{C}^{eGFGM}.$

\subsection{Extremal points}

Fix some probability $p \in (0, 1)$ and let $N_d$ be a rv defined as $I_1 + \dots + I_d$. Further, define $\mathcal{N}_d$ as the class of univariate pmfs for rvs with support $\{0, 1, \dots, d\}$ and mean $pd$. The authors of \cite{fontana2021ModelRiskCredit} prove a one-to-one correspondence between the class of pmfs for exchangeable Bernoulli random vectors with fixed mean $p$ and $\mathcal{N}_d$. As explained in \cite{blier-wong2022ExchangeableFGMCopulas}, extremal points of pmfs for symmetric multivariate Bernoulli distributions map to extremal points for FGM copulas. This property also holds for copulas in $\mathcal{C}^{eGFGM}$. It follows for a given value of $p$ and $d$, any copula in $\mathcal{C}^{eGFGM}$ can be expressed as a convex combination of extremal copulas of $\mathcal{C}^{eGFGM}$. Further, the following proposition, adapted from  \cite{fontana2021ModelRiskCredit}, provides an explicit method to obtain the extremal copulas.
\begin{proposition}
	The pmfs of the rvs $N_{d}$ corresponding to the extremal points of $\mathcal{N}_d$ are 
	$$\Pr(N_{d} = k) = \begin{cases}
		\frac{j_2 - pd}{j_2 - j_2},& \quad k = j_1,\\
		\frac{pd - j_1}{j_2 - j_2},& \quad k = j_2,\\
		0, & \text{Otherwise}
	\end{cases},$$
	with $j_1 \in \{0, 1, \dots, j_1^{\wedge}\}$ and $j_2 \in \{j_2^{\vee}, j_2^{\vee} + 1, \dots, d\}$, where $j_1^{\wedge} < pd < j_2^{\vee}$ If $pd$ is integer, the extreme points also contain the degenerate rv at $k = pd$. 
\end{proposition}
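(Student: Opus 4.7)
The plan is to recognize $\mathcal{N}_d$ as a convex polytope in $\mathbb{R}^{d+1}$ and apply the standard characterization of extreme points of a polytope in terms of basic feasible solutions of the defining linear system. Writing $\boldsymbol{f} = (f_0, f_1, \dots, f_d)$, the set $\mathcal{N}_d$ is cut out of the nonnegative orthant by the two equality constraints
\[
\sum_{k=0}^d f_k = 1, \qquad \sum_{k=0}^d k f_k = pd.
\]
This is a compact convex polytope, so by the Krein-Milman theorem it equals the convex hull of its extreme points; characterizing the latter suffices.

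The first step is to show that any extreme point has support of cardinality at most two. The argument is the usual one for basic feasible solutions: if three indices $k_1 < k_2 < k_3$ satisfy $f_{k_i}^* > 0$, then the homogeneous system $\sum_{i=1}^3 \epsilon_{k_i} = 0$ and $\sum_{i=1}^3 k_i \epsilon_{k_i} = 0$ is a $2 \times 3$ system and therefore admits a nontrivial solution $\boldsymbol{\epsilon}$. For sufficiently small $t > 0$, both $\boldsymbol{f}^* \pm t\boldsymbol{\epsilon}$ stay nonnegative, remain in $\mathcal{N}_d$, and exhibit $\boldsymbol{f}^*$ as a proper convex combination, contradicting extremality.

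Next, for a candidate extreme point supported on exactly $\{j_1, j_2\}$ with $j_1 < j_2$, I would solve the resulting $2 \times 2$ linear system, which yields the stated formulas $f_{j_1} = (j_2 - pd)/(j_2 - j_1)$ and $f_{j_2} = (pd - j_1)/(j_2 - j_1)$; nonnegativity then forces $j_1 \leq pd \leq j_2$, with strict inequalities whenever the support really has size two. Translating this via the definitions of $j_1^{\wedge}$ and $j_2^{\vee}$ (the largest integer strictly below $pd$ and smallest strictly above it) gives exactly the stated range. The single-support case $\boldsymbol{f} = \boldsymbol{e}_k$ lies in $\mathcal{N}_d$ only when $k = pd$, producing the extra Dirac extreme point precisely in the integer case. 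Extremality of each such two-point distribution follows because unique solvability of the $2 \times 2$ system forces any convex decomposition with the same support to be trivial, and no convex combination involving different supports can equal $\boldsymbol{f}^*$.

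The main obstacle I anticipate is not analytic but bookkeeping: I need to verify the edge cases carefully, in particular that when $pd$ is an integer the pairs with $j_1 = pd$ or $j_2 = pd$ degenerate to the Dirac mass (so they are not listed twice), and that the boundary extremes $j_1 = 0$ and $j_2 = d$ are genuinely admissible whenever the inequalities $j_1 < pd < j_2$ are satisfied. Once the casework on whether $pd \in \mathbb{Z}$ is handled cleanly, the result is a direct consequence of the basic-feasible-solution theory for polytopes defined by two linear equalities and nonnegativity constraints.
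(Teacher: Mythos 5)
Your proof is correct, and it is essentially self-contained where the paper is not: the paper states this proposition as ``adapted from'' Fontana et al.\ (2021) and gives no proof at all, so there is no in-paper argument to compare against. The route you take --- viewing $\mathcal{N}_d$ as the polytope $\{\boldsymbol{f} \geq \boldsymbol{0} : \sum_k f_k = 1,\ \sum_k k f_k = pd\}$, showing every extreme point is a basic feasible solution of the two equality constraints and hence has support of size at most two, solving the $2\times 2$ system on a two-point support, and reading off the admissible pairs from nonnegativity --- is the standard one and matches in substance how the cited reference establishes the result. Two small points are worth recording. First, your derivation yields $f_{j_1} = (j_2 - pd)/(j_2 - j_1)$ and $f_{j_2} = (pd - j_1)/(j_2 - j_1)$, which is the correct normalization; the proposition as printed contains a typo ($j_2 - j_2$ in both denominators), so your computation silently corrects the statement. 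Second, your extremality verification is stated a bit tersely but is sound: in a convex decomposition $\boldsymbol{f}^* = \lambda \boldsymbol{g} + (1-\lambda)\boldsymbol{h}$ with $\lambda \in (0,1)$, nonnegativity forces $\operatorname{supp}(\boldsymbol{g}), \operatorname{supp}(\boldsymbol{h}) \subseteq \operatorname{supp}(\boldsymbol{f}^*)$, and the unique solvability of the $2\times 2$ system on that support then gives $\boldsymbol{g} = \boldsymbol{h} = \boldsymbol{f}^*$; spelling out that support-containment step would make the argument airtight. The casework on $pd \in \mathbb{Z}$ is handled as you describe, with the pairs $j_1 = pd$ or $j_2 = pd$ collapsing to the Dirac mass rather than producing additional two-point extreme points.
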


\subsection{Extreme negative dependence}

Within the subfamily of exchangeable copulas constructed from exchangeable multivariate Bernoulli and Coxian-2 distributions, it is possible to identify the dependence structure which leads to the lower bound under the supermodular order. Let $\boldsymbol{I}^{-} = (I^{-}_1, \dots, I^{-}_d)$ be a random vector with pmf 
$$\Pr(I_1^{-} = i_1, \dots, I_d^{-} = i_d) = \begin{cases}
(j_2^{\vee} - dp) \binom{d}{j_1^{\wedge}}^{-1}, & \text{ if } \sum_{j = 1}^d i_j = j_1^{\wedge} \text{ and } pd \text{ is not integer}\\
(dp - j_1^{\wedge}) \binom{d}{j_2^{\vee}}^{-1}, & \text{ if } \sum_{j = 1}^d i_j = j_2^{\vee} \text{ and } pd \text{ is not integer}\\
\binom{d}{pd}^{-1}, & \text{ if } \sum_{j = 1}^d i_j = pd \text{ and } pd \text{ is integer}\\
0, & \text{Otherwise}
\end{cases}.$$
Then, for any exchangeable Bernoulli random vector $\boldsymbol{I}$, it holds that $\boldsymbol{I}^{-}\preceq_{sm} \boldsymbol{I} \preceq_{sm} \boldsymbol{I}^{c}$. It follows from Theorem \ref{thm:stochastic-orders} that the exchangeable random vector with representation as in \eqref{eq:representation-u} corresponding to the lower bound under the supermodular order, denoted as $\boldsymbol{U}^{END}$, is $\boldsymbol{U}^{END} = \boldsymbol{U}_0^{1-p}\boldsymbol{U}_1^{I^{-}}.$
\begin{proposition}
	Let $\boldsymbol{U}$ be an exchangeable random vector with representation as in \eqref{eq:representation-u}. If $pd$ is not an integer, then the minimal values of $\rho^{cL}$ and $\rho^{cU}$ are 
	$$\rho^{cL}\left(\boldsymbol{U}^{END}\right) = \frac{d - 1}{2^d -d-1} \left[\left(\frac{2(1-p)}{2-p}\right)^{d}\left(\frac{3-2p}{2-2p}\right)^{j_1^{\wedge}}\left(j + 1 - pd + \frac{(3-2p)(pd-j)}{2-2p}\right) - 1\right]$$
	and
	$$\rho^{cU} \left(\boldsymbol{U}^{END}\right) = \frac{d - 1}{2^d - d - 1} \left[\frac{2^{d-j_1^{\wedge}}}{(2-p)^{d}}\left(\frac{j_1^{\wedge} - pd}{2} + 1\right) - 1\right].$$
	If $pd$ is an integer, then the minimal values of $\rho^{cL}$ and $\rho^{cU}$ are 
	$$\rho^{cL}\left(\boldsymbol{U}^{END}\right) = \frac{d - 1}{2^d -d-1} \left[\left(\frac{(3-2p)^{p}(2-2p)^{1-p}}{2-p}\right)^d - 1\right]$$
	and
	$$\rho^{cU} \left(\boldsymbol{U}^{END}\right) = \frac{d - 1}{2^d - d - 1} \left[\left(\frac{2^{1-p}}{2-p}\right)^d - 1\right].$$
\end{proposition}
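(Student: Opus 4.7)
The plan is to specialize the general formulas \eqref{eq:rhocL} and \eqref{eq:rhocU} from Proposition \ref{prop:measures-association} to the exchangeable setting $p_1 = \dots = p_d = p$ and to the random vector $\boldsymbol{I}^{-}$. Recall that we already know from the preceding discussion that $\boldsymbol{U}^{END}\preceq_{sm}\boldsymbol{U}$ for every exchangeable $\boldsymbol{U}$ in $\mathcal{C}^{eGFGM}$, and Theorem \ref{thm:stochastic-orders} together with the dependence-order/association-measure link implies that $\rho^{cL}(\boldsymbol{U}^{END})$ and $\rho^{cU}(\boldsymbol{U}^{END})$ are indeed the minima. So the task reduces to \emph{evaluating} these two quantities at $\boldsymbol{I}^{-}$.

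The key reduction is that, because $I_m\in\{0,1\}$ and all $p_m$ equal $p$, the products appearing in \eqref{eq:rhocL} and \eqref{eq:rhocU} depend on $\boldsymbol{I}$ only through $N_d = \sum_{m=1}^d I_m$:
$$\prod_{m=1}^d\bigl(2(1-p)+I_m\bigr) = (2(1-p))^{d-N_d}(3-2p)^{N_d},\qquad \prod_{m=1}^d(2-I_m)=2^{d-N_d}.$$
Hence both expectations collapse to expectations of functions of $N_d$ alone, so only the induced pmf of $N_d^{-}=\sum_m I_m^{-}$ matters. From the definition of $\boldsymbol{I}^{-}$, summing the pmf over the $\binom{d}{k}$ permutations gives $P(N_d^{-}=j_1^{\wedge})=j_2^{\vee}-pd$ and $P(N_d^{-}=j_2^{\vee})=pd-j_1^{\wedge}$ when $pd$ is not an integer, with $j_2^{\vee}=j_1^{\wedge}+1$; and $N_d^{-}\equiv pd$ when $pd$ is an integer.

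From here the argument is purely algebraic. For the non-integer case, plug the two-atom pmf into the expectations and divide by $(2-p)^d$. I would factor $(2(1-p))^d(3-2p)^{j_1^{\wedge}}/(2(1-p))^{j_1^{\wedge}}$ out front, which produces the prefactor $\bigl(\tfrac{2(1-p)}{2-p}\bigr)^d\bigl(\tfrac{3-2p}{2-2p}\bigr)^{j_1^{\wedge}}$ and leaves the bracketed combination $j_1^{\wedge}+1-pd + \tfrac{(3-2p)(pd-j_1^{\wedge})}{2-2p}$; for $\rho^{cU}$ the same manoeuvre (with the simpler product $2^{d-N_d^{-}}$) yields $\tfrac{2^{d-j_1^{\wedge}}}{(2-p)^d}\bigl(\tfrac{j_1^{\wedge}-pd}{2}+1\bigr)$. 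For the integer case, $N_d^{-}$ is deterministic, the expectations become $(2(1-p))^{d(1-p)}(3-2p)^{pd}$ and $2^{d(1-p)}$ respectively, and each quotient by $(2-p)^d$ is the $d$-th power of $(2-2p)^{1-p}(3-2p)^p/(2-p)$ or $2^{1-p}/(2-p)$. Substituting into the $\frac{d\pm 1}{2^d-d-1}(\,\cdot\,-1)$ wrapper of \eqref{eq:rhocL}--\eqref{eq:rhocU} gives exactly the stated expressions.

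The only real obstacle is bookkeeping: one has to keep the exponents $d-j_1^{\wedge}$, $j_1^{\wedge}$, $d-j_2^{\vee}$, $j_2^{\vee}$ correctly aligned when pulling out the common prefactor, and exploit $j_2^{\vee}-j_1^{\wedge}=1$ together with $2-2p=2(1-p)$ to consolidate the two atoms into the single bracketed expression in the proposition. No new probabilistic idea is required beyond what Proposition \ref{prop:measures-association} and the pmf of $\boldsymbol{I}^{-}$ already supply.
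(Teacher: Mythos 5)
Your derivation is correct and is exactly the intended route: the paper states this proposition without proof, and the natural argument is the one you give --- minimality via $\boldsymbol{I}^{-}\preceq_{sm}\boldsymbol{I}$ and Theorem \ref{thm:stochastic-orders}, then evaluation of \eqref{eq:rhocL}--\eqref{eq:rhocU} after collapsing the products to $(2(1-p))^{d-N_d}(3-2p)^{N_d}$ and $2^{d-N_d}$ and using the two-atom (or degenerate) law of $N_d^{-}$. One caveat you slide past with ``$d\pm1$'': your computation produces the prefactor $\tfrac{d+1}{2^d-d-1}$ from \eqref{eq:def-rhocL}--\eqref{eq:def-rhocU}, whereas the proposition as printed has $\tfrac{d-1}{2^d-d-1}$; this is evidently a typo in the statement rather than a gap in your argument, but it should be flagged rather than absorbed.
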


In Tables \ref{tab:rhoclmin} and \ref{tab:rhocumin} of Appendix \ref{sec:app-association}, we provide the values of $\rho^{cL}$ and $\rho^{cU}$ for the random vector $\boldsymbol{U}^{END}$, with different values of $p$ and $d$.

\subsection{Mixture construction}

We close this section by proposing a simple method to construct high-dimensional exchangeable copulas based on Bernoulli and Coxian-2 distributions. Let $\Lambda$ be a rv with support on the unit interval. We define the pmf of $\boldsymbol{I}$ using the mixture construction
\begin{equation}\label{eq:finite-mixture}
	f_{\boldsymbol{I}}(\boldsymbol{i}) = \int_{0}^{1} \lambda^{i_\bullet}(1-\lambda)^{d - i_\bullet} \diff F_\Lambda(\lambda),
\end{equation}
where $i_\bullet = i_1 + \dots + i_d$, for $\boldsymbol{i} \in \{0, 1\}^d$. The classical result from \cite{de1929funzione} states that if $\boldsymbol{I}$ is sampled from an infinite sequence of exchangeable Bernoulli rvs, there exists a rv $\Lambda$ such that \eqref{eq:finite-mixture} holds. Within this construction, the value of $p$ is given by $E[\Lambda]$. 

\begin{proposition}
	Let $\Lambda$ be a rv with support in $[0, 1]$ and let $p = E[\Lambda]$. Let $\boldsymbol{I}$ be a random vector with pmf as defined in \eqref{eq:finite-mixture}. Further, let  $\boldsymbol{U}$ be a random vector with a stochastic representation $\boldsymbol{U}_0^{1-p}\boldsymbol{U}_1^{\boldsymbol{I}}$. Then, the copula associated with the cdf of $\boldsymbol{U}$ is 
	\begin{align*}
		C(\boldsymbol{u}) &= E\left[\prod_{m = 1}^{d} \left(u_m^{(1-p)^{-1}} - \frac{\Lambda}{p} \left\{u_m^{(1-p)^{-1}} - u\right\} \right)\right]\\
		&= \prod_{m = 1}^d u_m^{(1-p)^{-1}} \left(1 + \sum_{k = 1}^d (-1)^k\sum_{1 \leq j_1 < \dots < j_k \leq d} \frac{E[\Lambda^k]}{p^k} \left(1 - u_{j_1}^{-\frac{p}{1-p}}\right) \dots \left(1 - u_{j_k}^{-\frac{p}{1-p}}\right)\right),
	\end{align*}
	for $\boldsymbol{u} \in [0, 1]^d$.
\end{proposition}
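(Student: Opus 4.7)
The plan is to reduce the claim to Theorem \ref{thm:copula} by exploiting the conditional independence structure hidden in the mixture pmf \eqref{eq:finite-mixture}. First, I would observe that $f_{\boldsymbol{I}}(\boldsymbol{i}) = \int_0^1 \lambda^{i_\bullet}(1-\lambda)^{d-i_\bullet}\diff F_\Lambda(\lambda) = E[\Lambda^{i_\bullet}(1-\Lambda)^{d-i_\bullet}]$ is exactly the pmf of a random vector $\boldsymbol{I}$ which, conditionally on $\Lambda$, consists of $d$ independent $\mathrm{Bern}(\Lambda)$ rvs. In particular the marginals satisfy $\Pr(I_j=1) = E[\Lambda] = p$, which places us in the exchangeable setting $p_1 = \dots = p_d = p$ covered by Theorem \ref{thm:copula}.

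Next I would invoke Theorem \ref{thm:copula} with common parameter $p$, giving
\begin{equation*}
	C(\boldsymbol{u}) = E\left[\prod_{m=1}^{d}\left(u_m^{(1-p)^{-1}} - I_m\,\frac{u_m^{(1-p)^{-1}} - u_m}{p}\right)\right].
\end{equation*}
I would then condition on $\Lambda$ and push the product outside the inner expectation using conditional independence of the $I_m$'s: since $E[I_m\mid\Lambda] = \Lambda$, each inner factor collapses to $u_m^{(1-p)^{-1}} - (\Lambda/p)(u_m^{(1-p)^{-1}} - u_m)$. Taking the outer expectation over $\Lambda$ yields the first displayed equality of the proposition.

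For the second equality, I would carry out a purely algebraic rearrangement. Factoring $u_m^{(1-p)^{-1}}$ out of each factor and using $u_m/u_m^{(1-p)^{-1}} = u_m^{-p/(1-p)}$, the product inside the expectation becomes
\begin{equation*}
	\prod_{m=1}^{d} u_m^{(1-p)^{-1}} \cdot \prod_{m=1}^{d}\left(1 - \frac{\Lambda}{p}\bigl(1 - u_m^{-p/(1-p)}\bigr)\right).
\end{equation*}
Expanding the second product by the elementary identity $\prod_m(1-x_m) = 1 + \sum_{k=1}^d (-1)^k \sum_{j_1<\dots<j_k}\prod_{n=1}^k x_{j_n}$ with $x_m = (\Lambda/p)(1 - u_m^{-p/(1-p)})$ pulls out a factor $\Lambda^k/p^k$ at order $k$. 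Taking expectation and using Fubini to interchange expectation and finite sums replaces $\Lambda^k$ by $E[\Lambda^k]$, which produces the second stated form.

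I do not expect any serious obstacle here: the argument is essentially Theorem \ref{thm:copula} plus De Finetti-style conditioning plus bookkeeping. The only place where care is needed is the sign and exponent tracking in the algebraic expansion of the product (in particular verifying $1 - (1-p)^{-1} = -p/(1-p)$ so that the exponents in the natural representation come out as $u_{j_n}^{-p/(1-p)}$ and the $(-1)^k$ signs match the stated formula).
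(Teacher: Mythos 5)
Your proposal is correct: conditioning on $\Lambda$ to exploit the conditional i.i.d.\ $\mathrm{Bern}(\Lambda)$ structure, applying Theorem \ref{thm:copula} with $p_1=\dots=p_d=p=E[\Lambda]$, and then expanding $\prod_m(1-x_m)$ with $x_m=(\Lambda/p)\bigl(1-u_m^{-p/(1-p)}\bigr)$ is exactly the natural route, and your sign/exponent bookkeeping (including $1-(1-p)^{-1}=-p/(1-p)$) checks out. The paper states this proposition without any proof, so there is no alternative argument to compare against; the only discrepancy you might flag is the stray $u$ (which should read $u_m$) in the first displayed expectation of the paper's statement.
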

From the previous proposition, we present a subfamily of eGFGM copulas constructed with beta distributions.
\begin{example}
	Let $\Lambda \sim Beta(\alpha, \beta)$, for $\alpha > 0$ and $\beta > 0$. Then, we have $p = \alpha/(\alpha + \beta)$ and the copula becomes
	$$C(\boldsymbol{u}) = \prod_{m = 1}^d u_m\left(1 + \sum_{k = 1}^d \sum_{1 \leq j_1 < \dots < j_k \leq d} E\left[\left(\frac{(\alpha + \beta)\Lambda - \alpha}{\alpha}\right)^k\right] \left(1 + \frac{\alpha}{\beta}\right)^k \left(1 - u_{j_1}^{\frac{\alpha}{\beta}}\right) \dots \left(1 - u_{j_k}^{\frac{\alpha}{\beta}}\right)\right).$$
\end{example}

\section{Conclusion}\label{sec:conclusion}


In this paper, we explore a new family of copulas constructed with multivariate Bernoulli and Coxian-2 distributions. This family extends FGM copulas, enabling stronger dependence and asymmetry between margins. This family of copulas is constructed with a stochastic representation, such that high-dimensional generalizations are simple. 

In two dimensions, we recover one of the Huang-Kotz FGM copulas when $p_1 = p_2$, and one of the modified asymmetric Huang-Kotz FGM copulas when $p_1 \neq p_2$. Therefore, another contribution of this paper is to propose a stochastic representation of (modified asymmetric) Huang-Kotz FGM copulas. Further, the class $\mathcal{C}^{GFGM}$ naturally extends the (modified asymmetric) Huang-Kotz FGM copulas. In the same spirit, it will be useful to find a stochastic representation of the second family of Huang-Kotz copulas, whose expression is
$$C(u, v) = uv \left(1 + a \left(1-u\right)^b\left(1-v\right)^b\right),$$
for $(u, v) \in [0, 1]^d$, for $-1 \leq a \leq [(p+1)/(p-1)]^{b-1}$ and $p>1$.

Coxian-2 distributions are special cases of phase-type distributions with two phases. Just as Bernstein copulas extend FGM copulas, further research involves extending the copula proposed in this paper to more general phase-type distributions. 

\section{Acknowledgments}

This work was partially supported by the Natural Sciences and Engineering Research Council of Canada (Blier-Wong: 559169, Cossette: 04273; Marceau: 05605). The first author worked on this paper while on a visit to CREST, ENSAE Paris. 

\appendix

\section{Values of multivarite association measures}\label{sec:app-association}

\begin{table}[ht]
	\centering\begin{tabular}{rrrrrrrrrr}
		\diagbox[width=2em]{$p$}{$d$} &      2 &      3 &      5 &      8 &     10 &     15 &     20 &     50 &    100 \\
		0.1 & 0.0748 & 0.0853 & 0.0881 & 0.0659 & 0.0473 & 0.0161 & 0.0047 & 0.0000 & 0.0000 \\
		0.2 & 0.1481 & 0.1646 & 0.1619 & 0.1130 & 0.0777 & 0.0239 & 0.0062 & 0.0000 & 0.0000 \\
		0.3 & 0.2180 & 0.2351 & 0.2187 & 0.1414 & 0.0927 & 0.0254 & 0.0059 & 0.0000 & 0.0000 \\
		0.4 & 0.2812 & 0.2930 & 0.2558 & 0.1520 & 0.0944 & 0.0227 & 0.0047 & 0.0000 & 0.0000 \\
		0.5 & 0.3333 & 0.3333 & 0.2707 & 0.1463 & 0.0856 & 0.0178 & 0.0031 & 0.0000 & 0.0000 \\
		0.6 & 0.3673 & 0.3499 & 0.2613 & 0.1270 & 0.0696 & 0.0122 & 0.0018 & 0.0000 & 0.0000 \\
		0.7 & 0.3728 & 0.3345 & 0.2269 & 0.0979 & 0.0498 & 0.0072 & 0.0009 & 0.0000 & 0.0000 \\
		0.8 & 0.3333 & 0.2778 & 0.1684 & 0.0636 & 0.0297 & 0.0035 & 0.0003 & 0.0000 & 0.0000 \\
		0.9 & 0.2231 & 0.1690 & 0.0901 & 0.0293 & 0.0125 & 0.0011 & 0.0001 & 0.0000 & 0.0000
	\end{tabular}
	\caption{Maximal values of $\rho^{cL}$ for $d$-variate copulas $C \in \mathcal{C}^{GFGM(p)}$ for different values of $p$ and $d$.}\label{tab:rhocl}
\end{table}

\begin{table}[ht]
	\centering\begin{tabular}{rrrrrrrrrr}
		\diagbox[width=2em]{$p$}{$d$} &      2 &      3 &      5 &      8 &     10 &     15 &     20 &     50 &    100 \\
		0.1 & 0.0748 & 0.0643 & 0.0386 & 0.0130 & 0.0055 & 0.0005 & 0.0000 & 0.0000 & 0.0000 \\
		0.2 & 0.1481 & 0.1317 & 0.0843 & 0.0313 & 0.0141 & 0.0014 & 0.0001 & 0.0000 & 0.0000 \\
		0.3 & 0.2180 & 0.2009 & 0.1382 & 0.0573 & 0.0278 & 0.0034 & 0.0003 & 0.0000 & 0.0000 \\
		0.4 & 0.2812 & 0.2695 & 0.2006 & 0.0942 & 0.0499 & 0.0078 & 0.0010 & 0.0000 & 0.0000 \\
		0.5 & 0.3333 & 0.3333 & 0.2707 & 0.1463 & 0.0856 & 0.0178 & 0.0031 & 0.0000 & 0.0000 \\
		0.6 & 0.3673 & 0.3848 & 0.3442 & 0.2179 & 0.1431 & 0.0407 & 0.0100 & 0.0000 & 0.0000 \\
		0.7 & 0.3728 & 0.4110 & 0.4094 & 0.3097 & 0.2317 & 0.0933 & 0.0331 & 0.0000 & 0.0000 \\
		0.8 & 0.3333 & 0.3889 & 0.4370 & 0.4042 & 0.3497 & 0.2073 & 0.1095 & 0.0011 & 0.0000 \\
		0.9 & 0.2231 & 0.2772 & 0.3567 & 0.4140 & 0.4216 & 0.3828 & 0.3121 & 0.0434 & 0.0007
	\end{tabular}
	\caption{Maximal values of $\rho^{cU}$ for $d$-variate copulas $C \in \mathcal{C}^{GFGM(p)}$ for different values of $p$ and $d$.}\label{tab:rhocU}
\end{table}

\begin{table}[ht]
	\centering\begin{tabular}{rrrrrrrrrr}
		\diagbox[width=2em]{$p$}{$d$} &      2 &      3 &      5 &      8 &     10 &     15 &     20 &     50 &    100 \\
		0.1 & 0.0748 & 0.0748 & 0.0633 & 0.0395 & 0.0264 & 0.0083 & 0.0023 & 0.0000 & 0.0000 \\
		0.2 & 0.1481 & 0.1481 & 0.1231 & 0.0722 & 0.0459 & 0.0126 & 0.0032 & 0.0000 & 0.0000 \\
		0.3 & 0.2180 & 0.2180 & 0.1784 & 0.0994 & 0.0602 & 0.0144 & 0.0031 & 0.0000 & 0.0000 \\
		0.4 & 0.2812 & 0.2812 & 0.2282 & 0.1231 & 0.0721 & 0.0153 & 0.0028 & 0.0000 & 0.0000 \\
		0.5 & 0.3333 & 0.3333 & 0.2707 & 0.1463 & 0.0856 & 0.0178 & 0.0031 & 0.0000 & 0.0000 \\
		0.6 & 0.3673 & 0.3673 & 0.3028 & 0.1724 & 0.1064 & 0.0264 & 0.0059 & 0.0000 & 0.0000 \\
		0.7 & 0.3728 & 0.3728 & 0.3181 & 0.2038 & 0.1407 & 0.0503 & 0.0170 & 0.0000 & 0.0000 \\
		0.8 & 0.3333 & 0.3333 & 0.3027 & 0.2339 & 0.1897 & 0.1054 & 0.0549 & 0.0006 & 0.0000 \\
		0.9 & 0.2231 & 0.2231 & 0.2234 & 0.2217 & 0.2170 & 0.1920 & 0.1561 & 0.0217 & 0.0004
	\end{tabular}
	\caption{Maximal values of $\rho^{c}$for a $d$-variate copula $C \in \mathcal{C}^{GFGM(p)}$ for different values of $p$ and $d$.}\label{tab:rhoc}
\end{table}

\begin{table}[ht]
	\centering\begin{tabular}{rrrrrrrrrr}
		\diagbox[width=2em]{$p$}{$d$} &      2 &      3 &      5 &      8 &     10 &     15 &     20 &     50 &    100 \\
		0.1 & 0.0499 & 0.0499 & 0.0378 & 0.0195 & 0.0117 & 0.0031 & 0.0008 & 0.0000 & 0.0000 \\
		0.2 & 0.0988 & 0.0988 & 0.0760 & 0.0407 & 0.0253 & 0.0074 & 0.0021 & 0.0000 & 0.0000 \\
		0.3 & 0.1453 & 0.1453 & 0.1136 & 0.0636 & 0.0411 & 0.0132 & 0.0042 & 0.0000 & 0.0000 \\
		0.4 & 0.1875 & 0.1875 & 0.1494 & 0.0881 & 0.0594 & 0.0213 & 0.0075 & 0.0000 & 0.0000 \\
		0.5 & 0.2222 & 0.2222 & 0.1811 & 0.1133 & 0.0799 & 0.0324 & 0.0130 & 0.0001 & 0.0000 \\
		0.6 & 0.2449 & 0.2449 & 0.2049 & 0.1372 & 0.1020 & 0.0475 & 0.0220 & 0.0002 & 0.0000 \\
		0.7 & 0.2485 & 0.2485 & 0.2147 & 0.1554 & 0.1227 & 0.0667 & 0.0362 & 0.0009 & 0.0000 \\
		0.8 & 0.2222 & 0.2222 & 0.1996 & 0.1583 & 0.1337 & 0.0867 & 0.0562 & 0.0041 & 0.0001 \\
		0.9 & 0.1488 & 0.1488 & 0.1402 & 0.1236 & 0.1129 & 0.0896 & 0.0710 & 0.0176 & 0.0017
	\end{tabular}
	\caption{Maximal values of $\tau$ for a $d$-variate copula $C \in \mathcal{C}^{GFGM(p)}$ for different values of $p$ and $d$.}\label{tab:tau}
\end{table}

\begin{table}[ht]
	\centering
	\begin{tabular}{rrrrrrrr}
		\diagbox[width=2em]{$p$}{$d$} &       2 &       3 &       4 &       5 &       8 &      10 &      15 \\
		                          0.1 & -0.0083 & -0.0080 & -0.0070 & -0.0057 & -0.0023 & -0.0010 & -0.0001 \\
		                          0.2 & -0.0370 & -0.0343 & -0.0289 & -0.0227 & -0.0047 & -0.0020 & -0.0001 \\
		                          0.3 & -0.0934 & -0.0824 & -0.0449 & -0.0274 & -0.0073 & -0.0030 & -0.0002 \\
		                          0.4 & -0.1875 & -0.0977 & -0.0590 & -0.0467 & -0.0102 & -0.0040 & -0.0002 \\
		                          0.5 & -0.3333 & -0.1111 & -0.0954 & -0.0484 & -0.0137 & -0.0048 & -0.0003 \\
		                          0.6 & -0.2449 & -0.1603 & -0.0865 & -0.0706 & -0.0152 & -0.0056 & -0.0003 \\
		                          0.7 & -0.1598 & -0.1843 & -0.1123 & -0.0627 & -0.0162 & -0.0063 & -0.0003 \\
		                          0.8 & -0.0833 & -0.0926 & -0.0936 & -0.0883 & -0.0160 & -0.0067 & -0.0004 \\
		                          0.9 & -0.0248 & -0.0263 & -0.0254 & -0.0228 & -0.0121 & -0.0065 & -0.0003 \\
	\end{tabular}
	\caption{Minimal values of $\rho^{cL}$ for different values of $p$ and $d$ for copulas in $\mathcal{C}^{eGFGM}$.}\label{tab:rhoclmin}
\end{table}


\begin{table}[ht]
	\centering
	\begin{tabular}{rrrrrrrr}
		\diagbox[width=2em]{$p$}{$d$} &       2 &       3 &       4 &       5 &       8 &      10 &      15 \\
		                          0.1 & -0.0083 & -0.0086 & -0.0081 & -0.0071 & -0.0035 & -0.0018 & -0.0001 \\
		                          0.2 & -0.0370 & -0.0398 & -0.0389 & -0.0354 & -0.0068 & -0.0031 & -0.0002 \\
		                          0.3 & -0.0934 & -0.1044 & -0.0627 & -0.0357 & -0.0097 & -0.0040 & -0.0002 \\
		                          0.4 & -0.1875 & -0.1211 & -0.0661 & -0.0547 & -0.0120 & -0.0045 & -0.0003 \\
		                          0.5 & -0.3333 & -0.1111 & -0.0954 & -0.0484 & -0.0137 & -0.0048 & -0.0003 \\
		                          0.6 & -0.2449 & -0.1254 & -0.0759 & -0.0591 & -0.0127 & -0.0049 & -0.0003 \\
		                          0.7 & -0.1598 & -0.1352 & -0.0726 & -0.0443 & -0.0114 & -0.0046 & -0.0003 \\
		                          0.8 & -0.0833 & -0.0741 & -0.0600 & -0.0453 & -0.0093 & -0.0038 & -0.0002 \\
		                          0.9 & -0.0248 & -0.0233 & -0.0199 & -0.0158 & -0.0058 & -0.0025 & -0.0001 \\
	\end{tabular}
	\caption{Minimal values of $\rho^{cU}$ for different values of $p$ and $d$ for copulas in $\mathcal{C}^{eGFGM}$.}\label{tab:rhocumin}
\end{table}

%

\bibliographystyle{apalike}
\bibliography{ref}

\begin{thebibliography}{}

\bibitem[Amblard and Girard, 2009]{amblard2009new}
Amblard, C. and Girard, S. (2009).
\newblock A new extension of bivariate {{FGM}} copulas.
\newblock {\em Metrika}, 70(1):1--17.

\bibitem[Asmussen and Albrecher, 2010]{asmussen2010ruin}
Asmussen, S. and Albrecher, H. (2010).
\newblock {\em Ruin Probabilities}.
\newblock Number v. 14 in Advanced Series on Statistical Science and Applied
  Probability. {World Scientific}, {Singapore ; New Jersey}, 2nd ed edition.

\bibitem[Bairamov and Bayramoglu, 2013]{bairamov2013huang}
Bairamov, I. and Bayramoglu, K. (2013).
\newblock From the {{Huang}}\textendash{{Kotz FGM}} distribution to {{Baker}}'s
  bivariate distribution.
\newblock {\em Journal of Multivariate Analysis}, 113:106--115.

\bibitem[Bairamov and Kotz, 2002]{bairamov2002dependence}
Bairamov, I. and Kotz, S. (2002).
\newblock Dependence structure and symmetry of {{Huang-Kotz FGM}} distributions
  and their extensions.
\newblock {\em Metrika}, 56:55--72.

\bibitem[Barg{\`e}s et~al., 2011]{barges2011MomentsAggregateDiscounted}
Barg{\`e}s, M., Cossette, H., Loisel, S., and Marceau, {\'E}. (2011).
\newblock On the moments of aggregate discounted claims with dependence
  introduced by a {FGM} copula.
\newblock {\em ASTIN Bulletin: The Journal of the IAA}, 41(1):215--238.

\bibitem[Barg{\`e}s et~al., 2009]{barges2009TVaRbasedCapitalAllocation}
Barg{\`e}s, M., Cossette, H., and Marceau, {\'E}. (2009).
\newblock {{TVaR-based}} capital allocation with copulas.
\newblock {\em Insurance: Mathematics and Economics}, 45(3):348--361.

\bibitem[Bekrizadeh, 2022]{bekrizadeh2022generalized}
Bekrizadeh, H. (2022).
\newblock Generalized {{FGM}} copulas: {{Properties}} and applications.
\newblock {\em Communications in Statistics - Simulation and Computation},
  pages 1--12.

\bibitem[Bladt, 2005]{bladt2005review}
Bladt, M. (2005).
\newblock A review on phase-type distributions and their use in risk theory.
\newblock {\em ASTIN Bulletin: The Journal of the IAA}, 35(1):145--161.

\bibitem[{Blier-Wong} et~al., 2022a]{blier-wong2022CollectiveRiskModels}
{Blier-Wong}, C., Cossette, H., and Marceau, E. (2022a).
\newblock Collective risk models with {{FGM}} dependence.
\newblock {\em Working paper}, page~23.

\bibitem[{Blier-Wong} et~al., 2022b]{blier-wong2022ExchangeableFGMCopulas}
{Blier-Wong}, C., Cossette, H., and Marceau, E. (2022b).
\newblock Exchangeable {{FGM}} copulas.
\newblock {\em arXiv preprint arXiv:2205.11302}.

\bibitem[{Blier-Wong} et~al., 2022c]{blier-wong2022RiskAggregationFGMa}
{Blier-Wong}, C., Cossette, H., and Marceau, E. (2022c).
\newblock Risk aggregation with {{FGM}} copulas.

\bibitem[{Blier-Wong} et~al., 2022d]{blier-wong2022stochastic}
{Blier-Wong}, C., Cossette, H., and Marceau, E. (2022d).
\newblock Stochastic representation of {{FGM}} copulas using multivariate
  {{Bernoulli}} random variables.
\newblock {\em Computational Statistics \& Data Analysis}, 173.

\bibitem[Cambanis, 1977]{cambanis1977PropertiesGeneralizationsMultivariate}
Cambanis, S. (1977).
\newblock Some properties and generalizations of multivariate
  {{Eyraud-Gumbel-Morgenstern}} distributions.
\newblock {\em Journal of Multivariate Analysis}, 7(4):551--559.

\bibitem[Casella and Berger, 2002]{casella2002statistical}
Casella, G. and Berger, R.~L. (2002).
\newblock {\em Statistical {{Inference}}}.
\newblock {Duxbury Thomson Learning}.

\bibitem[Chadjiconstantinidis and Vrontos,
  2014]{chadjiconstantinidis2014RenewalRiskProcess}
Chadjiconstantinidis, S. and Vrontos, S. (2014).
\newblock On a renewal risk process with dependence under a
  {{Farlie}}\textendash{{Gumbel}}\textendash{{Morgenstern}} copula.
\newblock {\em Scandinavian Actuarial Journal}, 2014(2):125--158.

\bibitem[Cossette et~al., 2013]{cossette2013MultivariateDistributionDefined}
Cossette, H., C{\^o}t{\'e}, M.-P., Marceau, E., and Moutanabbir, K. (2013).
\newblock Multivariate distribution defined with
  {{Farlie}}\textendash{{Gumbel}}\textendash{{Morgenstern}} copula and mixed
  {{Erlang}} marginals: {{Aggregation}} and capital allocation.
\newblock {\em Insurance: Mathematics and Economics}, 52(3):560--572.

\bibitem[Cossette et~al., 2019]{cossette2019CollectiveRiskModels}
Cossette, H., Marceau, E., and Mtalai, I. (2019).
\newblock Collective risk models with dependence.
\newblock {\em Insurance: Mathematics and Economics}, 87:153--168.

\bibitem[C{\^o}t{\'e} and Genest, 2019]{cote2019dependence}
C{\^o}t{\'e}, M.-P. and Genest, C. (2019).
\newblock Dependence in a background risk model.
\newblock {\em Journal of Multivariate Analysis}, 172:28--46.

\bibitem[De~Finetti, 1929]{de1929funzione}
De~Finetti, B. (1929).
\newblock Funzione caratteristica di un fenomeno aleatorio.
\newblock In {\em Atti del Congresso Internazionale dei Matematici: Bologna del
  3 al 10 de settembre di 1928}, pages 179--190.

\bibitem[Denuit et~al., 2006]{denuit2006ActuarialTheoryDependent}
Denuit, M., Dhaene, J., Goovaerts, M., and Kaas, R. (2006).
\newblock {\em Actuarial {{Theory}} for {{Dependent Risks Measures}},
  {{Orders}} and {{Models}}}.
\newblock Wiley.

\bibitem[Dhaene et~al., 2002]{dhaene2002ConceptComonotonicityActuarial}
Dhaene, J., Denuit, M., Goovaerts, M.~J., Kaas, R., and Vyncke, D. (2002).
\newblock The concept of comonotonicity in actuarial science and finance:
  Theory.
\newblock {\em Insurance: Mathematics and Economics}, 31(1):3--33.

\bibitem[Dolati and Ubeda-Flores, 2006]{dolati2006some}
Dolati, A. and Ubeda-Flores, M. (2006).
\newblock Some new parametric families of multivariate copulas.
\newblock In {\em Int. Math. Forum}, pages 17--25.

\bibitem[Durante et~al., 2013]{durante2013BivariateCopulasGenerateda}
Durante, F., Fern{\'a}ndez~S{\'a}nchez, J., and {\'U}beda~Flores, M. (2013).
\newblock Bivariate copulas generated by perturbations.
\newblock {\em Fuzzy Sets and Systems}, 228:137--144.

\bibitem[Durante and Sempi, 2015]{durante2015PrinciplesCopulaTheory}
Durante, F. and Sempi, C. (2015).
\newblock {\em Principles of {{Copula Theory}}}.
\newblock {Chapman and Hall/CRC}, zeroth edition.

\bibitem[Ebaid et~al., 2022]{ebaid2022NewExtensionFGM}
Ebaid, R., Elbadawy, W., Ahmed, E., and Abdelghaly, A. (2022).
\newblock A new extension of the {{FGM}} copula with an application in
  reliability.
\newblock {\em Communications in Statistics - Theory and Methods},
  51(9):2953--2961.

\bibitem[Eyraud, 1936]{eyraud1936principes}
Eyraud, H. (1936).
\newblock Les principes de la mesure des correlations.
\newblock {\em Ann. Univ. Lyon, III. Ser., Sect. A}, 1(30-47):111.

\bibitem[Farlie, 1960]{farlie1960PerformanceCorrelationCoefficients}
Farlie, D.~J. (1960).
\newblock The performance of some correlation coefficients for a general
  bivariate distribution.
\newblock {\em Biometrika}, 47(3/4):307--323.

\bibitem[Fontana et~al., 2021]{fontana2021ModelRiskCredit}
Fontana, R., Luciano, E., and Semeraro, P. (2021).
\newblock Model risk in credit risk.
\newblock {\em Mathematical Finance}, 31(1):176--202.

\bibitem[Fontana and Semeraro,
  2018]{fontana2018RepresentationMultivariateBernoulli}
Fontana, R. and Semeraro, P. (2018).
\newblock Representation of multivariate {{Bernoulli}} distributions with a
  given set of specified moments.
\newblock {\em Journal of Multivariate Analysis}, 168:290--303.

\bibitem[Fontana and Semeraro, 2022]{fontana2022HighDimensionalBernoulli}
Fontana, R. and Semeraro, P. (2022).
\newblock High dimensional {{Bernoulli}} distributions: Algebraic
  representation and applications.

\bibitem[Genest and Favre, 2007]{genest2007EverythingYouAlways}
Genest, C. and Favre, A.-C. (2007).
\newblock Everything {{You Always Wanted}} to {{Know}} about {{Copula
  Modeling}} but {{Were Afraid}} to {{Ask}}.
\newblock {\em Journal of Hydrologic Engineering}, 12(4):347--368.

\bibitem[Gijbels et~al., 2021]{gijbels2021SpecificationMultivariateAssociation}
Gijbels, I., Kika, V., and Omelka, M. (2021).
\newblock On the specification of multivariate association measures and their
  behaviour with increasing dimension.
\newblock {\em Journal of Multivariate Analysis}, 182:104704.

\bibitem[Gumbel, 1960]{gumbel1960BivariateExponentialDistributions}
Gumbel, E.~J. (1960).
\newblock Bivariate exponential distributions.
\newblock {\em Journal of the American Statistical Association},
  55(292):698--707.

\bibitem[Huang and Kotz, 1984]{huang1984correlation}
Huang, J.~S. and Kotz, S. (1984).
\newblock Correlation structure in iterated {{Farlie-Gumbel-Morgenstern}}
  distributions.
\newblock {\em Biometrika}, 71(3):633--636.

\bibitem[Huang and Kotz, 1999]{huang1999modifications}
Huang, J.~S. and Kotz, S. (1999).
\newblock Modifications of the {{Farlie-Gumbel-Morgenstern}} distributions.
  {{A}} tough hill to climb.
\newblock {\em Metrika}, 49(2):135--145.

\bibitem[H{\"u}rlimann, 2017]{hurlimann2017comprehensive}
H{\"u}rlimann, W. (2017).
\newblock A comprehensive extension of the {{FGM}} copula.
\newblock {\em Statistical Papers}, 58(2):373--392.

\bibitem[Jiang et~al., 2021]{jiang2021set}
Jiang, W., Song, S., Hou, L., and Zhao, H. (2021).
\newblock A {{Set}} of {{Efficient Methods}} to {{Generate High-Dimensional
  Binary Data With Specified Correlation Structures}}.
\newblock {\em The American Statistician}, 75(3):310--322.

\bibitem[Joe, 1990]{joe1990multivariate}
Joe, H. (1990).
\newblock Multivariate concordance.
\newblock {\em Journal of multivariate analysis}, 35(1):12--30.

\bibitem[Joe, 1997]{joe1997MultivariateModelsMultivariate}
Joe, H. (1997).
\newblock {\em Multivariate Models and Multivariate Dependence Concepts}.
\newblock {CRC Press}, {Boca Raton, FL}.

\bibitem[Joe, 2015]{joe2015DependenceModelingCopulas}
Joe, H. (2015).
\newblock {\em Dependence {{Modeling}} with {{Copulas}}}.
\newblock Number 134 in Monographs on Statistics and Applied Probability. {CRC
  Press}, {Boca Raton, FL}.

\bibitem[Johnson and Kott,
  1975]{johnson1975GeneralizedFarliegumbelmorgensternDistributions}
Johnson, N.~L. and Kott, S. (1975).
\newblock On some generalized farlie-gumbel-morgenstern distributions.
\newblock {\em Communications in Statistics}, 4(5):415--427.

\bibitem[Kim et~al., 2011]{kim2011GeneralizedBivariateCopulas}
Kim, J.-M., Sungur, E.~A., Choi, T., and Heo, T.-Y. (2011).
\newblock Generalized bivariate copulas and their properties.
\newblock {\em Model Assisted Statistics and Applications}, 6(2):127--136.

\bibitem[Klugman et~al., 2013]{klugman2013loss}
Klugman, S.~A., Panjer, H.~H., and Willmot, G.~E. (2013).
\newblock {\em Loss {{Models}}: {{Further Topics}}}.
\newblock {John Wiley \& Sons}.

\bibitem[Komorn{\'i}k et~al.,
  2017]{komornik2017DependenceMeasuresPerturbations}
Komorn{\'i}k, J., Komorn{\'i}kov{\'a}, M., and Kalick{\'a}, J. (2017).
\newblock Dependence measures for perturbations of copulas.
\newblock {\em Fuzzy Sets and Systems}, 324:100--116.

\bibitem[Kotz and Drouet, 2001]{kotz2001correlation}
Kotz, S. and Drouet, D. (2001).
\newblock {\em Correlation {{And Dependence}}}.
\newblock {World Scientific}.

\bibitem[Lai and Xie, 2000]{lai2000NewFamilyPositive}
Lai, C.~D. and Xie, M. (2000).
\newblock A new family of positive quadrant dependent bivariate distributions.
\newblock {\em Statistics \& Probability Letters}, 46(4):359--364.

\bibitem[Li and Li, 2013]{li2013stochastic}
Li, H. and Li, X., editors (2013).
\newblock {\em Stochastic {{Orders}} in {{Reliability}} and {{Risk}}}, volume
  208 of {\em Lecture {{Notes}} in {{Statistics}}}.
\newblock {Springer New York}, {New York, NY}.

\bibitem[Mai and Scherer, 2012]{mai2012simulating}
Mai, J.-F. and Scherer, M. (2012).
\newblock {\em Simulating Copulas: Stochastic Models, Sampling Algorithms, and
  Applications}.
\newblock Number v. 4 in Series in Quantitative Finance. {Imperial College
  Press ; World Scientific}, {London : Hackensack, NJ}.

\bibitem[Morgenstern, 1956]{morgenstern1956EinfacheBeispieleZweidimensionaler}
Morgenstern, D. (1956).
\newblock Einfache beispiele zweidimensionaler verteilungen.
\newblock {\em Mitt, Math, Statist.}, 8:234--235.

\bibitem[M{\"u}ller and Stoyan, 2002]{muller2002ComparisonMethodsStochastic}
M{\"u}ller, A. and Stoyan, D. (2002).
\newblock {\em Comparison {{Methods}} for {{Stochastic Models}} and {{Risks}}}.
\newblock {Wiley}, {Chichester}, 1st edition edition.

\bibitem[Nelsen, 1996]{nelsen1996NonparametricMeasuresMultivariate}
Nelsen, R.~B. (1996).
\newblock Nonparametric measures of multivariate association.
\newblock In {\em Institute of {{Mathematical Statistics Lecture Notes}} -
  {{Monograph Series}}}, pages 223--232. {Institute of Mathematical
  Statistics}, {Hayward, CA}.

\bibitem[Nelsen, 2002]{nelsen2002ConcordanceCopulasSurvey}
Nelsen, R.~B. (2002).
\newblock Concordance and {{Copulas}}: {{A Survey}}.
\newblock In Cuadras, C.~M., Fortiana, J., and {Rodriguez-Lallena}, J.~A.,
  editors, {\em Distributions {{With Given Marginals}} and {{Statistical
  Modelling}}}, pages 169--177. {Springer Netherlands}, {Dordrecht}.

\bibitem[Nelsen, 2006]{nelsen2006IntroductionCopulas}
Nelsen, R.~B. (2006).
\newblock {\em An Introduction to Copulas}.
\newblock Springer Series in Statistics. {Springer}, {New York}, 2nd ed
  edition.

\bibitem[Ota and Kimura, 2021]{ota2021effective}
Ota, S. and Kimura, M. (2021).
\newblock Effective estimation algorithm for parameters of multivariate
  {{Farlie}}\textendash{{Gumbel}}\textendash{{Morgenstern}} copula.
\newblock {\em Japanese Journal of Statistics and Data Science}.

\bibitem[Pathak and Vellaisamy,
  2016]{pathak2016NoteGeneralizedFarlieGumbelMorgenstern}
Pathak, A.~K. and Vellaisamy, P. (2016).
\newblock A note on generalized {{Farlie-Gumbel-Morgenstern}} copulas.
\newblock {\em Journal of Statistical Theory and Practice}, 10(1):40--58.

\bibitem[{Rodr{\'i}guez-Lallena},
  2004]{rodriguez-lallena2004NewClassBivariatea}
{Rodr{\'i}guez-Lallena}, J. (2004).
\newblock A new class of bivariate copulas.
\newblock {\em Statistics \& Probability Letters}, 66(3):315--325.

\bibitem[Rodr{\'\i}guez-Lallena and {\'U}beda-Flores,
  2010]{rodriguez2010multivariate}
Rodr{\'\i}guez-Lallena, J.~A. and {\'U}beda-Flores, M. (2010).
\newblock Multivariate copulas with quadratic sections in one variable.
\newblock {\em Metrika}, 72(3):331--349.

\bibitem[R{\"u}schendorf, 2013]{ruschendorf2013MathematicalRiskAnalysis}
R{\"u}schendorf, L. (2013).
\newblock {\em Mathematical {{Risk Analysis}}}.
\newblock Springer {{Series}} in {{Operations Research}} and {{Financial
  Engineering}}. {Springer Berlin Heidelberg}, {Berlin, Heidelberg}.

\bibitem[{Saminger-Platz} et~al., 2021]{saminger-platz2021impact}
{Saminger-Platz}, S., Koles{\'a}rov{\'a}, A., {\v S}eliga, A., Mesiar, R., and
  Klement, E.~P. (2021).
\newblock The impact on the properties of the {{EFGM}} copulas when extending
  this family.
\newblock {\em Fuzzy Sets and Systems}, 415:1--26.

\bibitem[Schmid et~al., 2010]{schmid2010copula}
Schmid, F., Schmidt, R., Blumentritt, T., Gai{\ss}er, S., and Ruppert, M.
  (2010).
\newblock Copula-based measures of multivariate association.
\newblock In {\em Copula theory and its applications}, pages 209--236.
  Springer.

\bibitem[Shaked and Shanthikumar, 2007]{shaked2007StochasticOrders}
Shaked, M. and Shanthikumar, J.~G. (2007).
\newblock {\em Stochastic Orders}.
\newblock Springer Series in Statistics. {Springer}, {New York}.

\bibitem[Sklar, 1959]{sklar1959FonctionsRepartitionDimensions}
Sklar, M. (1959).
\newblock Fonctions de repartition \`a n dimensions et leurs marges.
\newblock {\em Publ. inst. statist. univ. Paris}, 8:229--231.

\bibitem[Teugels, 1990]{teugels1990representations}
Teugels, J.~L. (1990).
\newblock Some representations of the multivariate {{Bernoulli}} and binomial
  distributions.
\newblock {\em Journal of Multivariate Analysis}, 32(2):256--268.

\bibitem[Woo and Cheung, 2013]{woo2013NoteDiscountedCompound}
Woo, J.-K. and Cheung, E.~C. (2013).
\newblock A note on discounted compound renewal sums under dependency.
\newblock {\em Insurance: Mathematics and Economics}, 52(2):170--179.

\end{thebibliography}

\end{document}